\makeatletter\@addtoreset{equation}{section}\makeatother
\newtheorem{thm}{Theorem}[section] 
\newtheorem{lem}[thm]{Lemma}  
\newtheorem{cor}[thm]{Corollary} 
\newtheorem{prop}[thm]{Proposition}  
\newtheorem{hyp}{Hypothesis}
\newtheoremstyle{named}{}{}{\itshape}{}{\bfseries}{.}{.5em}{\thmnote{#3's }#1}
\theoremstyle{named}
\theoremstyle{definition}
\newtheorem{rmk}{Remark}
\newcommand{\comment}[1]{}
\newcommand{\Kncrit}{K_{\mathrm{crit,n}}}
\newcommand{\Kcrit}{K_\mathrm{crit}}
\title{Capturing the critical coupling of large random Kuramoto networks with graphons}
\author[1]{Jason Bramburger}
\author[2,3]{Matt Holzer}
\affil[1]{\small Department of Mathematics and Statistics, Concordia University, Montr\'eal, QC, Canada}
\affil[2]{\small Department of Mathematical Sciences, George Mason University, Fairfax, VA, USA }
\affil[3]{\small Center for Mathematics and Artificial Intelligence (CMAI), George Mason University, Fairfax, VA, USA}
\date{}
\begin{document}

\maketitle

\begin{abstract}
Collective oscillations and patterns of synchrony have long fascinated researchers in the applied sciences, particularly due to their far-reaching importance in chemistry, physics, and biology. The Kuramoto model has emerged as a prototypical mathematical equation to understand synchronization in coupled oscillators, allowing one to study the effect of different frequency distributions and connection networks between oscillators. In this work we provide a framework for determining both the emergence and the persistence of synchronous solutions to Kuramoto models on large random networks and with random frequencies. This is achieved by appealing the theory of graphons to analyze a  continuum model coming in the form of an infinite oscillator limit which provides a single master equation for studying random Kuramoto models. We show that bifurcations to synchrony and hyperbolic synchrony patterns in the  continuum model can also be found in related random Kuramoto networks for large numbers of oscillators. We further provide a detailed application of our results to oscillators arranged on Erd\H{o}s--R\'enyi random networks, for which we further identify that not all bifurcations to synchrony emerge through simple co-dimension one bifurcations.    Finally, the method of proof can be applied to interacting particle systems beyond the Kuramoto model to establish the persistence of saddle-node bifurcations.
\end{abstract}

%%%%%%%%%%%%%%%%%%%%%%%%%%%%%%%%%%%%%%%%%%%%%%%%%%
\section{Introduction}

Many processes throughout the applied sciences can be modeled as sets of interacting periodic processes, particularly in neuroscience \cite{bick2020understanding}. A major focus for mathematical investigation of these networks is to identify whether or not these oscillations fall into global patterns of synchrony. Synchrony of neuronal oscillators governs many cognitive tasks and functions \cite{uhlhaas2009neural,singer1999neuronal}, including playing a critical role in memory formation \cite{axmacher2006memory,jutras2010synchronous}. Synchrony is also important for the functioning of power grid networks \cite{motter2013spontaneous,rohden2012self}. Alternatively, certain patterns of synchrony in neuronal networks have been associated with epilepsy and Parkinson's disease \cite{hammond2007pathological,lehnertz2009synchronization}. Thus, in the study of networks of oscillating processes, it is often important to determine whether synchrony can occur and what form it takes. 

Synchronization in the mathematical literature is often studied in the Kuramoto phase model
\cite{kuramoto1975self,kuramoto1984chemical,acebron2005kuramoto}
\begin{equation}\label{Kuramoto}
    \dot\theta_j = \omega_j + \frac{K}{n}\sum_{k = 1}^n A_{j,k}\sin(\theta_k - \theta_j), \qquad j = 1,\dots, n.
\end{equation}
Here each $\theta_j \in S^1$ represents the relative phase of oscillator $j$, with $\omega_j \in [-1,1]$ being their intrinsic natural frequency, and $K \geq 0$ the strength of coupling between oscillators. The  symmetric matrix $[A_{j,k}]_{j,k = 1}^n$ is an  undirected graph adjacency matrix encoding the network structure so that $A_{j,k} >0$ denotes a connection between oscillators $j$ and $k$, while $A_{j,k} = 0$ represents the absence of one. Synchronization in \eqref{Kuramoto} occurs when there exists solutions satisfying $\dot \theta_j(t) = \dot\theta_k(t)$ for all $j,k = 1,\dots n$, meaning that oscillators evolve with the same velocity, differing only by an initial phase offset, termed a {\em phase-lag}. When $K = 0$ all oscillators act independently and no synchronization occurs, while in the limit $K\to \infty$ many synchronized states exist with $|\theta_j(t) - \theta_k(t)| \in \{0,\pi\}$. Thus, one concludes that there exists an intermediary coupling strength $K = \Kcrit > 0$, termed the {\em critical coupling} \cite{ling2020critical,dorfler2011critical,strogatz2000kuramoto}, at which the existence of synchronized states first appears in \eqref{Kuramoto}. For complete graphs, i.e. $A_{j,k} = 1$ for all $(j,k)$, D\"orfler and Bullo \cite{dorfler2011critical} survey much of the current landscape, while further providing upper and lower bounds on $\Kcrit$. However, these bounds remain at a finite distance from each other for all $n \geq 1$. Related work in \cite{ling2020critical} provides upper bounds on $\Kcrit$ for dense networks, while \cite{dorfler2013synchronization} bounds the critical coupling from above based upon the specifics of the network topology and the intrinsic frequencies.

In attempting to understand the onset of synchrony in \eqref{Kuramoto} with $n \gg 1$, one may formally pass to a limiting  continuum model
\begin{equation}\label{KuramotoGraphon}
    \frac{\partial \theta}{\partial t} = \Omega(x) + K\int_0^1 W(x,y)\sin(\theta(y,t) - \theta(x,t))\mathrm{d}y,
\end{equation}
for a continuous function $\Omega:[0,1] \to [-1,1]$ and a kernel $W:[0,1]^2 \to [0,1]$.   Pioneering work in this direction was done by Ermentrout \cite{ermentrout1985synchronization} who used \eqref{KuramotoGraphon} with $W \equiv 1$ to estimate $\Kcrit$ with all-to-all coupling and random frequencies in \eqref{Kuramoto} when $n \gg 1$. A second use for \eqref{KuramotoGraphon} is to study {\em identically coupled oscillators}, i.e. $\omega_j = \omega_k$ in \eqref{Kuramoto} for all $j,k = 1,\dots,n$, by having $\Omega$ be a constant function and identifying both the existence and stability of synchronous patterns with different network topologies. Early work in this direction comes from \cite{strogatz2000kuramoto}, which arranges the oscillators in a ring to observe patterns of synchrony whose phase-lags increase monotonically around it. More recent investigations have employed graphons \cite{lovasz12} to use \eqref{KuramotoGraphon} to capture patterns of synchrony in \eqref{Kuramoto} over random networks \cite{medvedev2014small,bramburger2024persistence,nagpal2024synchronization}. 

In this paper we leverage graphon theory to capture the existence of synchronous solutions in \eqref{Kuramoto} on random networks with random frequencies. This work extends contributions such as \cite{ermentrout1985synchronization,chiba2018bifurcations} that use \eqref{KuramotoGraphon} to study large random Kuramoto models to justify findings for finite $n \gg 1$ models of the form \eqref{Kuramoto}. Moreover, this work contributes to the growing literature on synchronization of Kuramoto models on random graphs \cite{bick2024dynamical,ling2019landscape,abdalla2022expander,kassabov2022global,medvedev2014nonlinear,nagpal2024synchronization}, while extending these studies to allow for random frequencies as well. In particular, we show that \eqref{KuramotoGraphon} provides a single master equation to identify the critical coupling in, which asymptotically estimates the critical coupling in classes of random Kuramoto models \eqref{Kuramoto}. 

The appeal to graphon theory allows one to think of groups of networks over different numbers of vertices as belonging to the same family, represented by the limiting function (the graphon) $W$ in \eqref{KuramotoGraphon}. Thus, for all $n \gg 1$ and networks with adjacency matrices $[A_{j,k}]_{j,k = 1}^n$ belonging to the same family as a graphon $W$, our analysis shows that with frequencies $\{\omega_j\}_{j = 1}^n$ drawn independently from a distribution on $[-1,1]$, with we have that  asymptotically almost surely -- that is, with probability tending to one as $n$ tends to infinity:    
\begin{enumerate}
    \item Hyperbolic synchronous solutions in \eqref{KuramotoGraphon} whose phase-lags vary continuously over $x \in [0,1]$ at a fixed $K$ lead to hyperbolic synchronous solutions to \eqref{Kuramoto} at the same coupling value $K$,
    \item Saddle-node bifurcations at $K = \Kcrit$ of  synchronous solutions in \eqref{KuramotoGraphon} with continuous phase-lags persist as saddle-node bifurcations of synchronous solutions in \eqref{Kuramoto} at some $K = \Kncrit \approx \Kcrit$. 
\end{enumerate}
 These results suppose that the graphon $W$ satisfies a technical condition, see (\ref{Wcompactness}) below, so that the integral operator in (\ref{KuramotoGraphon}) maps $C[0,1]$ to $C[0,1]$.

Thus, in many cases, these results fully capture the emergence and persistence of synchronous solutions in \eqref{Kuramoto} for $n\gg 1$ using the single master equation \eqref{KuramotoGraphon}. However, as we also show in this work, the emergence of synchronous solutions in \eqref{KuramotoGraphon} is not always attributed to a saddle-node bifurcation. In particular, we prove that for Erd\H{o}s--R\'enyi random networks and certain distributions of random frequencies, the emergence of synchronous solutions in \eqref{KuramotoGraphon} comes from a bifurcation involving the essential spectrum.   In this scenario, we are able to apply the persistence results of \cite{bramburger2024persistence} to show that for any $K>\Kcrit$ there exists $n$ sufficiently large so that stable synchronous solutions exist in (\ref{Kuramoto}) for that choice of coupling constant.  This provides some rigorous justification of $\Kcrit$ as being a relevant predictor for the onset of synchrony in the case of a bifurcation owing to essential spectrum.   While we are unable to rigorously prove the existence of a saddle-node bifurcation to synchrony in the case of essential spectrum, we do provide numerical observations that indicate that even though our hypotheses do not hold, our results do. This leads one to at least conjecture that similar results could be obtained for more complex bifurcation scenarios only present in infinite-dimensional system.

Before proceeding, we note that analysis of \eqref{Kuramoto} for large numbers of oscillators with all-to-all coupling is often studied in a mean-field limit, distinct from \eqref{KuramotoGraphon}, in which the evolution of probability densities describing the oscillators is studied; see \cite{kuramoto1975self,sakaguchi88,strogatz2000kuramoto,ott08}. Development of an analogous model incorporating network interactions using graphon theory was obtained in \cite{chiba2018mean}. The  probability density paradigm is advantageous as it allows for the convenient study of various time dependent solutions to \eqref{Kuramoto} including the bifurcation of the incoherent state \cite{chiba15,dietert18}.  That being said, we find \eqref{KuramotoGraphon} a more suitable tool for the mathematical treatment of the existence and stability of synchronized steady states in \eqref{Kuramoto} for classes of random networks with sufficiently large numbers of oscillators. 

Beyond the applications considered here we also note that the mathematical methods developed in this manuscript can be viewed as an extension the persistence results obtained by the authors in \cite{bramburger2024persistence} to include the persistence of saddle-node bifurcations.  The techniques developed here could be applied to other systems of interacting processes; see \cite{bramburger2024persistence} for other possible application areas. 

This paper is organized as follows. In Section~\ref{sec:Graphons} we provide the relevant background theory for graphons. Then, in Section~\ref{sec:Results} we provide our hypotheses and precise statements of the main results summarized informally above. Section~\ref{sec:ApplicationER} turns to applying these results to random Kuramoto models posed on Erd\H{o}s--R\'enyi networks, including proving that we can either have saddle-node bifurcations to synchronous solutions in \eqref{KuramotoGraphon} or the more complex essential spectrum bifurcations. The proofs of our results are left to Sections~\ref{sec:CentreManifoldProof} and \ref{sec:PersistenceProof}. We conclude in Section~\ref{sec:Discussion} with a discussion of our findings and numerous avenues of potential future research.

%%%%%%%%%%%%%%%%%%%%%%%%%%%%%%%%%%%%%%%%%%%%%%%%%%
\section{Graphons}\label{sec:Graphons}

To obtain the results in this paper we appeal to the theory of graphons; see \cite{borgs11,lovasz12}. A graphon is a symmetric function $W:[0,1]^2 \to [0,1]$ that can be used to represent the edge weight $W(x,y)$ of a graph with infinitely many vertices $x,y\in[0,1]$ . One can see the presence of the graphon in the  continuum model \eqref{KuramotoGraphon}, here functioning as an integral kernel. Boundedness of graphons implies that they always belong to $L^p([0,1]^2)$ for all $p \in [1,\infty]$, however a more natural measure of distance on the set of graphons is given by the cut norm,
\begin{equation}
    \|W\|_\square = \sup_{S,T \subseteq [0,1]} \bigg|\int_{S\times T} W(x,y)\ \mathrm{d}x\mathrm{d}y\bigg|,
\end{equation}
where the supremum is taken over all measurable subsets $S,T$ of $[0,1]$. There are many other equivalent forms of the cut norm (see \cite[Appendix~E]{janson2010graphons}), all of which treat the graphon not as a function as the $p$-norms would, but as an integral kernel. Indeed, \cite[Lemma~E.6]{janson2010graphons} shows that the integral operator $T_W:L^p \to L^q$ acting by
\begin{equation}
    [T_Wf](x) = \int_0^1 W(x,y)f(y)\mathrm{d}y 
\end{equation}
has operator norm bounded as $\|W\|_\square \leq \|T_W\|_{p\to q} \leq 2\sqrt{2}\|W\|_\square^{\min\{1 - 1/p,1/q\}}$ for all $p,q\in[1,\infty]$. Moreover, convergence in the cut norm does not necessarily imply convergence in the $p$-norms, while the converse is always true since $\|W\|_\square \leq \|W\|_p$ for every graphon $W$.

Graphons and the cut norm find significant application as a rule for generating families of finite graphs through sampling. Precisely, let $ \{x_1,x_2,\dots,x_n\}$ be an ordered $n$-tuple of independent uniform random points drawn from $[0,1]$. We describe two different random graphs generated from a single graphon $W$:
\begin{enumerate}
    \item Let $\mathbb{H}(n,W)$ denote the weighted graph with vertices $\{1,2,\dots,n\}$ and edge weights $W(x_j,x_k)$ between vertices $j$ and $k$, $j \neq k$. Loops are given edge weight 0.
    \item Let $\mathbb{G}(n,W)$ denote the simple graph with vertices $\{1,2,\dots,n\}$ which are connected with an edge of weight 1 with probability $W(x_j,x_k)$, $j \neq k$. Loops again have an edge weight of 0. 
\end{enumerate}
To compare these random graphs with their generating graphon we consider a step graphon generated by a graph $G$. To do this, partition $[0,1]$ into $n$ disjoint intervals of equal length $I_1^n,I_2^n,\dots,I_n^n$, so that the step function $W_G:[0,1]^2 \to [0,1]$ takes the value of the edge weight between vertices $j$ and $k$ of $G$ for all $x \in I_j^n$ and $y \in I_k^n$. Lemma~10.16 of \cite{lovasz12} gives that the bounds 
\begin{equation} \label{eq:cutnormconv}
    \|W_{\mathbb{H}(n,W)} - W\|_\square \leq \frac{20}{\sqrt{\log(n)}}, \qquad \|W_{\mathbb{G}(n,W)} - W\|_\square \leq \frac{22}{\sqrt{\log(n)}}
\end{equation}
hold with probability at least $1 - \mathrm{exp}(-n/(2\log(n)))$ for all $n \geq 1$. Thus, we achieve convergence in probability of the families of random graphs generated by a graphon $W$.

\begin{rmk}\label{rmk:Graphon}
    Throughout this work we will always consider the sample points $\{x_1,x_2,\dots,x_n\}$ to be drawn independently from the uniform distribution on $[0,1]$. However, there are many other ways of generating these sequences to achieve almost sure convergence in the cut norm \cite[Lemma~11.33]{lovasz12}. For example, one may fix $x_j = j/n$ for $j = 1,\dots,n$ or drawn each $x_j$ from the uniform distribution on $[(j - 1)/n,j/n]$. Effectively, any sequence that is a good set for numerical integration, meaning 
    \begin{equation}
        \int_0^1 f(x)\mathrm{d}x \sim \frac{1}{n}\sum_{j = 1}^n f(x_j),
    \end{equation}
    with small error for Riemann integrable $f$, will do.
\end{rmk}

We will also consider the graphon analogue of the degree of a vertex $x \in [0,1]$, given by
\begin{equation}
    d_W(x) := \int_0^1 W(x,y)\mathrm{d}y. 
\end{equation}
The above is simply the continuum analogue of the degree of a vertex in a graph normalized by the number of vertices $n$. Along with the cut norm, another measure of convergence of graphs derived from graphons to their generating graphon is through their degree functions. In particular, \cite[Lemma~I]{garin2024corrections}\footnote{This is an improved version of a result found in \cite{vizuete2021laplacian}.} proves that for any graphon $W$, with probability $1 - \nu$ we have
\begin{equation}\label{DegreeConv}
    \|d_{W_{\mathbb{H}(n,W)}} - d_{W_{\mathbb{G}(n,W)}}\|_\infty = \sup_{x \in [0,1]} |d_{W_{\mathbb{H}(n,W)}}(x) - d_{W_{\mathbb{G}(n,W)}}(x)| \leq \sqrt{\frac{\log(2n/\nu)}{n}},
\end{equation}
thus allowing for a comparison between the degrees of the weighted and simple graphs derived from $W$. Moreover, in many cases one can show that $\|d_{W_{\mathbb{H}(n,W)}} - d_W\|_\infty$ converges to 0 in probability, including for {\em ring graphons}, i.e. $W(x,y) = W(|x - y|)$, since the degree functions are constant \cite{bramburger2023pattern}. Thus, combining the above allows one to show that at least for the case of ring graphons we can find $\|d_{W_{\mathbb{G}(n,W)}} - d_W\|_\infty \to 0$ in probability as well. Our main result is achieved by assuming degree convergence, while all demonstrations use graphons for which we know this holds.

%%%%%%%%%%%%%%%%%%%%%%%%%%%%%%%%%%%%%%%%%%%%%%%%%%
\section{Main Results}\label{sec:Results}

We now leverage the theory of graphons from the previous section to provide our main results. In particular, we will prove that \eqref{KuramotoGraphon} is a single master equation for analyzing the existence, stability, and in some cases the onset of synchronous solutions for random Kuramoto models \eqref{Kuramoto} with large numbers of oscillators $n \gg 1$. The advantage is that, under the mild assumptions that follow, one can provide explicit results for infinitely many Kuramoto models with random frequencies and/or posed on random networks.

We will begin with the following assumption that allows us to side-step the probabilistic framework of random graphs and graphons laid out previously. For any $n \geq 1$, subdivide the interval $[0,1]$ into $n$ disjoint intervals $I_j^n = [(j-1)/n,j/n)$ of equal length and let $W_n:[0,1]^2 \to [0,1]$ be a step graphon on the partition $\{I_j^n \times I_k^n\}_{j,k = 1}^n$ of $[0,1]^2$. Similarly, let $\Omega_n:[0,1] \to [-1,1]$ be a step function on $\{I_j^n\}_{j = 1}^n$. The values on the steps of $\Omega_n$ will be the frequences $\{\omega_j\}_{j = 1}^n$ for the discrete model \eqref{Kuramoto} in what follows. We assume the following.

\begin{hyp}\label{hyp:ConvergenceSequences}
    There exists sequences $\{\Omega_n\}_{n = 1}^\infty$ and $\{W_n\}_{n = 1}^\infty$, along with a continuous function $\Omega:[0,1] \to [-1,1]$ and a graphon $W:[0,1]^2 \to [0,1]$ so that
    \[
        \lim_{n \to \infty}\|\Omega_n - \Omega\|_\infty = 0, \quad \lim_{n \to \infty}\|W_n - W\|_\square = 0, \quad \lim_{n \to \infty}\|d_{W_n} - d_W\|_\infty = 0. 
    \] 
    Furthermore, the graphon $W$ is such that for every $\varepsilon > 0$, there exists a $\delta > 0$ so that for all $x_0 \in [0,1]$ we have
    \begin{equation}\label{Wcompactness}
        \int_0^1 |W(x,y) - W(x_0,y)|\mathrm{d}y < \varepsilon
    \end{equation}
    when $|x - x_0| < \delta$ and $x \in [0,1]$.
\end{hyp}

\begin{rmk} Hypothesis~\ref{hyp:ConvergenceSequences} is introduced to avoid attaching probabilistic qualifiers to most rigorous statements made in this paper. The cut norm portion of the hypothesis holds almost surely by taking $W_n = W_{\mathbb{H}(n,W)}$ or $W_n = W_{\mathbb{G}(n,W)}$ for all $n \geq 1$.  Indeed, estimate (\ref{eq:cutnormconv}) implies that for any $\varepsilon>0$ the probability that $\|W_n - W\|_\square\geq \varepsilon$ for an infinite number of $n$ is zero by the Borel-Cantelli Lemma; see \cite[Theorem 4.3]{billingsley}. Remark~\ref{rmk:Graphon} indicates that many other such sequences exist.  Degree convergence is slightly more involved, but a similar argument and the discussion following (\ref{DegreeConv})  implies that degree convergence holds almost surely for ring-graphons which are degree constant.  We prove that $\|\Omega_n-\Omega\|_\infty\to 0$ almost surely in Lemma~\ref{lem:OmegaConv} below.

We emphasize that the cut norm and degree convergence portions of Hypothesis~\ref{hyp:ConvergenceSequences} holds almost surely for sequences of Erd\"os R\'eyni random graphs which constitute the main object of study in \S~\ref{sec:ApplicationER}.
\end{rmk}

The condition \eqref{Wcompactness} on $W$  guarantees that $T_W$ maps $C[0,1]$ to itself and will further be employed to prove compactness of the operator $T_W$ on the space $C([0,1])$ and is necessary to our results. It can be easily verified in the case of continuous $W$, while \cite[Appendix~A]{bramburger2024persistence} proves that it holds for piecewise continuous ring graphons. We now provide the following lemma that can be used to confirm the remaining portion of Hypothesis~\ref{hyp:ConvergenceSequences}, with the proof left to Appendix~\ref{app:OmegaConv}.

\begin{lem}\label{lem:OmegaConv} % Omega convergence lemma 
Let $\Omega:[0,1] \to [-1,1]$ be a continuous function and for each $n \geq 1$ let $ \{x_1,x_2,\dots,x_n\}$ be an ordered $n$-tuple of independent uniform random points drawn from $[0,1]$. If the step function $\Omega_n$ is assigned the value $\Omega_n(x) = \Omega(x_j)$ for all $x \in I_j^n$ and $j = 1,\dots, n$ then  
\[
    \lim_{n \to \infty}\|\Omega_n - \Omega\|_\infty = 0
\]
almost surely.
\end{lem}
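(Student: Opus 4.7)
The plan is to interpret the ordered $n$-tuple as the order statistics $x_1 \leq x_2 \leq \cdots \leq x_n$ of $n$ independent uniform samples on $[0,1]$, since otherwise a random relabeling of values $\Omega(x_j)$ across the intervals $I_j^n$ could not generally converge uniformly to $\Omega$. With this interpretation, the proof reduces to showing that the order statistics concentrate uniformly near the equispaced grid $\{j/n\}_{j = 1}^n$, after which uniform continuity of $\Omega$ delivers the conclusion.

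First I would invoke a quantitative concentration bound on the empirical distribution function $F_n(t) = n^{-1}\#\{i : x_i \leq t\}$. The Dvoretzky--Kiefer--Wolfowitz inequality states that
\begin{equation*}
\mathbb{P}\!\left(\sup_{t \in [0,1]} |F_n(t) - t| > \varepsilon\right) \leq 2\,\mathrm{e}^{-2n\varepsilon^2}.
\end{equation*}
Choosing a sequence $\varepsilon_n \to 0$ slowly enough that $\sum_n \mathrm{e}^{-2n\varepsilon_n^2} < \infty$ (e.g.\ $\varepsilon_n = n^{-1/3}$) and applying the Borel--Cantelli lemma yields $\sup_{t \in [0,1]} |F_n(t) - t| \to 0$ almost surely. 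Because $F_n(x_j) = j/n$ by definition of the $j$-th order statistic, a standard generalized-inverse argument transfers this bound to the grid distance, giving $\max_{1 \leq j \leq n} |x_j - j/n| \to 0$ almost surely.

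Next I would combine this with uniform continuity of $\Omega$. Since $\Omega$ is continuous on the compact interval $[0,1]$, it is uniformly continuous: given $\varepsilon > 0$, choose $\delta > 0$ such that $|\Omega(s) - \Omega(t)| < \varepsilon$ whenever $|s - t| < \delta$. On the almost sure event of the previous step, for all $n$ large enough we have both $\max_j |x_j - j/n| < \delta/2$ and $1/n < \delta/2$, so for any $x \in I_j^n$ the triangle inequality yields $|x - x_j| \leq |x - j/n| + |j/n - x_j| < \delta$, and hence $|\Omega_n(x) - \Omega(x)| = |\Omega(x_j) - \Omega(x)| < \varepsilon$. Taking the supremum over $x \in [0,1]$ and letting $\varepsilon$ vanish gives $\|\Omega_n - \Omega\|_\infty \to 0$ almost surely.

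The main obstacle, modest as it is, is the generalized-inverse step that converts the DKW bound on $\sup_t |F_n(t) - t|$ into a bound on $\max_j |x_j - j/n|$. This relies on the simple monotonicity observation that if $|F_n(t) - t| \leq \varepsilon$ for all $t$, then the inequality passes to the (right-continuous) inverse evaluated at the lattice points $j/n$, which are precisely the $x_j$.
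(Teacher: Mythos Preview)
Your proposal is correct and follows essentially the same approach as the paper: both establish uniform convergence of the empirical CDF to the identity (you via DKW and Borel--Cantelli, the paper via Glivenko--Cantelli directly), pass to the quantile function to get $\max_j |x_j - j/n| \to 0$ almost surely, and then invoke uniform continuity of $\Omega$. Your use of DKW is slightly more quantitative but otherwise the argument is the same, and the ``generalized-inverse'' step you flag as the main obstacle is exactly the step the paper carries out explicitly.
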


We now seek to quantify synchronous solutions of both \eqref{Kuramoto} and \eqref{KuramotoGraphon} as roots of appropriate functions. A phase-locked solution to \eqref{KuramotoGraphon} takes the form $\theta(x,t) = \overline{\Omega}t + u(x)$, with $\overline{\Omega}\in\mathbb{R}$ and $u:[0,1] \to \mathbb{R}$, and solves $F(u,\overline{\Omega},K) = 0$ where
\begin{equation}\label{FFunction}
    F(u,\overline{\Omega},K) = \Omega(x) - \overline{\Omega} + K\int_0^1 W(x,y)\sin(u(y) - u(x))\mathrm{d}y,     
\end{equation}
where the graphon $W$ is fixed according to Hypothesis~\ref{hyp:ConvergenceSequences} above. One can find that any solution of $F(u,\overline{\Omega},K) = 0$ must have $\overline{\Omega} = \int_0^1 \Omega(x)\mathrm{d}x$. For sequences of step functions $\{\Omega_n\}_{n = 1}^\infty$ and $\{W_n\}_{n = 1}^\infty$ satisfying Hypothesis~\ref{hyp:ConvergenceSequences} we will further define the step version of \eqref{FFunction} 
\begin{equation}\label{FnFunction}
    F_n(u,\omega_n^*,K) = \Omega_n(x) - \omega_n^* + K\int_0^1 W_n(x,y)\sin(u(y) - u(x))\mathrm{d}y,   
\end{equation}
for each $n \geq 1$. If we further restrict $u(x)$ to be a step function on the same steps as $\Omega_n$ and $W_n$, solving $F_n = 0$ is equivalent to a finite-dimensional problem. This is because only the values ${\bf u} = \{u_j\}_{j = 1}^n\in\mathbb{R}^n$ on each step need to be found, thus solving $F_n = 0$ with $u$ restricted to a step function is equivalent to solving $[G_n({\bf u},\omega_n^*,K)]_j = 0$ for each $j = 1,\dots,n$, where
\begin{equation}\label{GFunction}
    [G_n({\bf u},\omega_n^*,K)]_j = \omega_j - \omega_n^* + \frac{K}{n}\sum_{k = 1}^n A_{j,k}\sin(u_k - u_j), \qquad j = 1,\dots, n,     
\end{equation}
with $\omega_j = \Omega(x_j)$ and $A_{j,k} = W(x_j,x_k)$. Notice that roots of \eqref{GFunction} lie in one-to-one correspondence with synchronous solutions of \eqref{Kuramoto} as $\theta_j(t) = \omega_n^*t + u_j$ for each $j = 1,\dots,n$. 

Since both \eqref{FFunction} and \eqref{GFunction} exhibit a translational invariance in the phase variables, it follows that solutions (if they exist) are never unique. To eliminate this redundancy, when considering solutions of equation (\ref{FFunction})  we will  restrict to the space of mean-zero continuous functions, denoted
\begin{equation}\label{MeanZeroSet}
    X = \bigg\{u \in C([0,1]):\ \int_0^1 u(x)\mathrm{d}x = 0\bigg\},
\end{equation}
and equipped with the supremum norm $\|\cdot\|_\infty$. We now present our first result.

\begin{thm}\label{thm:Persistence}
    Assume Hypothesis~\ref{hyp:ConvergenceSequences} and suppose that for a fixed $K > 0$ there exists $u^* \in X$ satisfying $F(u^*,\overline\Omega,K) = 0$ with $\overline\Omega = \int_0^1 \Omega(x)\mathrm{d}x$. Suppose further that the linearization of $F$ about $u^*$ on $X$, denoted $DF(u^*,\overline\Omega,K):X \to X$, is invertible with bounded inverse. Then, for each $\rho > 0$, there exists an $N \geq 1$ such that for all $n\geq N$ there is a vector $({\bf u}_n^*,\omega^*_n) \in \mathbb{R}^n\times \mathbb{R}$ satisfying $G_n({\bf u}_n^*,\omega^*_n,K) = 0$ and $\max\{\|u_n^* - u^*\|_\infty,|\overline\Omega - \omega^*_n|\} < \rho$, where $u_n^*\in X$ is the step function representation of ${\bf u}_n^*$ over $\{I_j^n\}_{j = 1}^n$. Furthermore, if $u^*$ is a stable solution of \eqref{FFunction}, then there exists an $M \geq 1$ so that for all $n\geq \max\{N,M\}$ the solution $({\bf u}_n^*,\omega^*_n)$ of \eqref{GFunction} is stable as well.     
\end{thm}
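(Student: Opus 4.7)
The plan is a Newton / implicit function argument centred at a step-function projection of $u^*$, together with a standard spectral perturbation argument to propagate stability.

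First I would slave $\omega_n^*$ to the data: summing the $j$th coordinate of $G_n(\mathbf{u},\omega_n^*,K)=0$ over $j$ kills the double sum by the symmetry $A_{j,k}=A_{k,j}$ and antisymmetry of $\sin$, forcing $\omega_n^*=\tfrac{1}{n}\sum_j\omega_j=\int_0^1\Omega_n(x)\,dx$, which by Hypothesis~\ref{hyp:ConvergenceSequences} satisfies $|\omega_n^*-\overline\Omega|\le\|\Omega_n-\Omega\|_\infty\to 0$. With $\omega_n^*$ slaved, only the mean-zero component of $F_n(u,\omega_n^*,K)=0$ on step functions remains to solve. As candidate I take the mean-zero step function $\tilde u_n\in X$ with $\tilde u_n|_{I_j^n}=n\int_{I_j^n}u^*(y)\,dy$, for which uniform continuity of $u^*$ gives $\|\tilde u_n-u^*\|_\infty\to 0$.

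Next I would bound the residual $\|F_n(\tilde u_n,\omega_n^*,K)\|_\infty$. Using $\sin(a-b)=\sin a\cos b-\cos a\sin b$ rewrites the kernel term in both $F$ and $F_n$ as $\cos(u(x))[T_{W_\bullet}\sin u](x)-\sin(u(x))[T_{W_\bullet}\cos u](x)$, and after absorbing Lipschitz pieces of size $\|\tilde u_n-u^*\|_\infty$ the residual reduces to $\|T_{W_n}g-T_Wg\|_\infty$ for $g\in\{\sin u^*,\cos u^*\}$. For this sup-norm convergence I would write
\[
[T_{W_n-W}g](x)=g(x)\bigl(d_{W_n}(x)-d_W(x)\bigr)+\int_0^1(W_n-W)(x,y)\bigl(g(y)-g(x)\bigr)\,dy,
\]
the first term being controlled by degree convergence, and the second by combining uniform continuity of $g$ with~\eqref{Wcompactness} (to smooth the $x$-dependence of $(W_n-W)(x,\cdot)$) and cut-norm convergence after localising $y$ away from $x$.

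For invertibility of the linearisation, I would differentiate,
\[
DF_n(\tilde u_n,\omega_n^*,K)v(x)=K\int_0^1 W_n(x,y)\cos(\tilde u_n(y)-\tilde u_n(x))\bigl(v(y)-v(x)\bigr)\,dy,
\]
and rerun the previous estimate with $g=v\cos(u^*(\cdot)-u^*(x))$ (using~\eqref{Wcompactness} to absorb the $x$-dependence) to show that on the space of mean-zero step functions $DF_n(\tilde u_n,\omega_n^*,K)$ is an $o(1)$ operator-norm perturbation of $DF(u^*,\overline\Omega,K)$. Since the latter is boundedly invertible by assumption, a Neumann-series argument gives uniformly bounded invertibility of the former, after which a quantitative implicit function theorem delivers $\mathbf{u}_n^*$ with $G_n(\mathbf{u}_n^*,\omega_n^*,K)=0$ and $\|u_n^*-u^*\|_\infty<\rho$. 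For the stability claim, the spectral assumption on $DF(u^*,\overline\Omega,K)$ and operator-norm convergence $DF_n(\mathbf{u}_n^*,\omega_n^*,K)\to DF(u^*,\overline\Omega,K)$ transfer the left-half-plane spectrum to $DF_n$ by standard spectral perturbation, giving linear stability of the synchronous state of~\eqref{Kuramoto}. The hard part is precisely the sup-norm convergence $\|T_{W_n}g-T_Wg\|_\infty\to 0$: cut-norm convergence alone only yields $L^p$-control for $p<\infty$, so the degree convergence in Hypothesis~\ref{hyp:ConvergenceSequences} is essential for pointwise-in-$x$ control of the constant mode, while~\eqref{Wcompactness} is indispensable to handle the mixed $x,y$-dependence introduced by the $\sin(u(y)-u(x))$ coupling.
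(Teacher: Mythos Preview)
Your overall strategy (contraction mapping around a step projection of $u^*$, then spectral perturbation) matches the paper's, and your treatment of $\omega_n^*$ and the residual $\|F_n(\tilde u_n,\omega_n^*,K)\|_\infty$ is along the right lines; the paper obtains exactly that sup-norm residual bound by quoting \cite[Lemma~4.7]{bramburger2024persistence}, which is the precise statement your degree-plus-cut-norm decomposition is aiming at.

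The genuine gap is in your linearisation step. You claim that on mean-zero step functions $DF_n(\tilde u_n,\omega_n^*,K)$ is an $o(1)$ \emph{sup-norm operator} perturbation of $DF(u^*,\overline\Omega,K)$, by ``rerunning the previous estimate with $g=v\cos(u^*(\cdot)-u^*(x))$''. But the residual estimate is for a \emph{fixed continuous} $g$, whereas here $g$ ranges over the unit ball and need not have any modulus of continuity. The integral part of $DF_n-DF$ is $v\mapsto K\int_0^1(W_n-W)(x,y)\cos(u^*(y)-u^*(x))v(y)\,\mathrm{d}y$, whose $L^\infty\!\to\!L^\infty$ norm is $\sup_x\int_0^1|W_n-W|(x,y)|\cos(\cdot)|\,\mathrm{d}y$. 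For $W_n=W_{\mathbb{G}(n,p)}$ and $W\equiv p$ this is bounded below by $c\min(p,1-p)>0$ for all $n$, so sup-norm operator convergence fails outright; the paper in fact remarks explicitly that ``we no longer have operator norm convergence in general'' in the sup norm. Consequently your Neumann-series invertibility argument, and the stability transfer built on the same convergence, do not go through.

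The paper's route around this is to pass through $L^2$: cut-norm convergence \emph{does} give $\|L-L_n\|_{2\to 2}\to 0$, so Kato's perturbation theory yields that the spectrum of $L_n$ on $L^2$ is close to that of $L$ and $(L_n)^{-1}$ (restricted to the appropriate subspace) is uniformly bounded in $L^2$. One then bootstraps to $L^\infty$ using the multiplication-plus-integral decomposition $L_nv=Q_n(x)v+T_n[v]$: since the essential spectrum (the range of $Q_n$) is bounded away from $0$, one solves $L_nv_n=w_n$ as $v_n=(w_n-T_n[v_n])/Q_n$, and H\"older's inequality converts the uniform $L^2$ bound on $v_n$ into a uniform $L^\infty$ bound. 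This is the content of Lemma~\ref{lem:specDFn} and the argument in the proof of Lemma~\ref{lem:maptozero}; the same $L^2$-to-$L^\infty$ bootstrap is what carries the stability statement. Your proposal would be repaired by replacing the sup-norm Neumann step with this two-stage argument.
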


Theorem~\ref{thm:Persistence} describes the persistence of {\em hyperbolic} synchronous solutions to the graphon model \eqref{FFunction} for fixed values of the coupling constant $K$. In particular, it states that if a hyperbolic solution of \eqref{FFunction} exists, then with $n$ taken sufficiently large, a similar synchronous solution can be found in \eqref{GFunction} for the same coupling $K$. However, Theorem~\ref{thm:Persistence} does not deal with the onset of synchronization from a saddle-node bifurcation by varying the coupling coefficient $K$ since, by definition, hyperbolicity is violated at such a bifurcation point. Thus, our second result will prove the persistence of such saddle-node bifurcations, under the following hypothesis. 

\begin{hyp}\label{hyp:SNgraphon}
    There exists a $\Kcrit>0$ and $u^* \in X$ such that $F(u^*,\overline\Omega,\Kcrit) = 0$ with $\overline\Omega = \int_0^1 \Omega(x)\mathrm{d}x$ and the following properties 
    \begin{enumerate}
        \item[i)]The linear operator $DF(u^*,\overline\Omega,\Kcrit):X \to X$ has stable spectrum with the exception of a zero eigenvalue with algebraic and geometric multiplicity one. Precisely, there exists a $v^*\in X$ normalized so that $\int_0^1 [v^*(x)]^2\mathrm{d}x=1$ such that 
    \[ 
        \mathrm{Ker}(DF(u^*,\overline\Omega,\Kcrit))=\mathrm{span}\{ v^* \}. 
    \]
    \item[ii)] The following non-degeneracy assumption holds: 
    \begin{equation} \label{eq:graphonnondegenerate}
        \frac{\int_0^1\int_0^1 W(x,y)\sin(u^*(y)-u^*(x))\left(v^*(y)-v^*(x)\right)^2v^*(x) \mathrm{d}x\mathrm{d}y }{\int_0^1 (\Omega(x)-\overline\Omega)v^*(x)\mathrm{d} x}<0.
    \end{equation}
    \end{enumerate}  
\end{hyp}

The above assumption guarantees the existence of a saddle-node bifurcation in the graphon model at $K = \Kcrit$. As shown in Lemma~\ref{lem:CMgraphon} below, the sign condition in Hypothesis~\ref{hyp:SNgraphon}(ii) implies the existence of steady-state synchronized solutions for $K>\Kcrit$ to \eqref{FFunction}. Alternately, one could reverse the sign to have synchronous solutions for $K < \Kcrit$, but we have opted for the current presentation to best reflect our applications in the next section. Furthermore, we have assumed for simplicity that all other elements of the spectrum of $DF(u^*,\overline\Omega,\Kcrit)$ in $X$ beyond the bifurcation eigenvalue are contained in the left half of the complex plane. These results are easily extended to the case where there is a finite collection of eigenvalues in the right half of the complex plane, but the reason we have chosen to omit this case is simply for the ease of presentation. No further technical hurdles exist should there be eigenvalues in the right half of the complex plane. We now present the following result which uses the notation $B_\delta(v)$ to denote the ball of radius $\delta > 0$ about the vector $v$.

\begin{thm}\label{thm:Bifurcation} % Main result
    Assume Hypotheses~\ref{hyp:ConvergenceSequences} and \ref{hyp:SNgraphon}. There exists a $\delta > 0$ such that for all $\rho > 0$ there exists an $N \geq 1$ so that for every $n \geq N$ the following is true. There is a $\Kncrit > 0$ and vector $({\bf u}_n^*,\omega^*_n) \in \mathbb{R}^n\times \mathbb{R}$ satisfying $G_n({\bf u}_n^*,\omega^*_n,\Kncrit) = 0$ and $\max\{\|u_n^* - u^*\|_\infty,|\overline\Omega - \omega^*_n|,|\Kncrit - \Kcrit|\} < \rho$, where $u_n^*\in X$ is the step function representation of ${\bf u}_n^*$ over $\{I_j^n\}_{j = 1}^n$. Furthermore, there exists two smooth and distinct branches of solutions to $G_n = 0$ emanating from $({\bf u}_n^*,\omega^*_n,\Kncrit)\in \mathbb{R}^n\times\mathbb{R}\times\mathbb{R}$ that exists for all $K \in [\Kncrit,\Kcrit + \delta]$, while there are no solutions of $G_n = 0$ in $B_\delta({\bf u}_n^*,\omega^*_n,\Kncrit)$ for all $K \in (\Kcrit - \delta,\Kncrit)$.          
\end{thm}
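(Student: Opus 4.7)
The plan is to use Lemma~\ref{lem:CMgraphon}, whose proof will reduce the graphon equation $F=0$ at $(u^*,\overline\Omega,\Kcrit)$ to a scalar Lyapunov--Schmidt equation with a saddle-node normal form, and then to perform the analogous reduction for $F_n$ and show that the resulting scalar equations converge in $C^2$ as $n\to\infty$. The saddle-node structure then persists at a nearby critical coupling $\Kncrit$, and because $\Omega_n$ and $W_n$ are step functions on the partition $\{I_j^n\}$, solutions of $F_n=0$ that lie in the step-function subspace $X_n\subset X$ are in one-to-one correspondence with solutions of $G_n=0$.

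For the graphon reduction I would take $L=DF(u^*,\overline\Omega,\Kcrit)$, which by Hypothesis~\ref{hyp:SNgraphon}(i) is Fredholm of index zero with $\mathrm{Ker}(L)=\mathrm{span}(v^*)$, and use the spectral projection $P$ onto $\mathrm{span}(v^*)$ with complement $Q=I-P$. Writing $u=u^*+\alpha v^*+w$ with $w\in QX$, the Implicit Function Theorem applied to $QF=0$ yields $w=w(\alpha,K)$, and the scalar reduced equation $g(\alpha,K)=\int_0^1 v^*\,F(u^*+\alpha v^*+w(\alpha,K),\overline\Omega,K)\,\drm x=0$ has the expansion $g(\alpha,K)=A\alpha^2+B(K-\Kcrit)+O(|\alpha|^3+|K-\Kcrit|^2+|\alpha||K-\Kcrit|)$. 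A direct computation combined with the identity $\Kcrit\int W(x,y)\sin(u^*(y)-u^*(x))\,\drm y=-(\Omega(x)-\overline\Omega)$, which follows from $F=0$ at the critical point, turns the non-degeneracy condition \ref{eq:graphonnondegenerate} into $A/B<0$, giving the desired saddle-node. For the finite problem I would perform the same reduction on $F_n$ using the same projection $P$. Hypothesis~\ref{hyp:ConvergenceSequences}, together with condition \ref{Wcompactness} on $W$, yields that $F_n\to F$ and $DF_n\to DF$ uniformly on a bounded $C^0$-neighborhood of $(u^*,\overline\Omega,\Kcrit)$ in a sufficiently strong sense that $QDF_n Q$ is uniformly invertible on $QX$ for all $n$ large. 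The IFT then produces $w_n(\alpha,K)$ close to $w(\alpha,K)$, and hence a scalar reduced function $g_n(\alpha,K)$ that is $C^2$-close to $g$ on a neighborhood of $(0,\Kcrit)$.

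The saddle-node then persists by applying the Implicit Function Theorem to the augmented system $\{g_n=0,\;\partial_\alpha g_n=0\}$ at $(0,\Kcrit)$: its linearization there is lower-triangular with nonzero diagonal entries $B$ and $2A$, so the $C^2$-closeness of $g_n$ to $g$ yields a unique nearby critical point $(\alpha_n^*,\Kncrit)$ with $\Kncrit\to\Kcrit$. The quadratic behaviour of $g_n$ in $\alpha$ at this point together with $A/B<0$ gives the two smooth solution branches on $[\Kncrit,\Kcrit+\delta]$ and no solutions inside $B_\delta$ for $K\in(\Kcrit-\delta,\Kncrit)$. Lifting these solutions back through $u=u^*+\alpha v^*+w_n(\alpha,K)$ and projecting onto $X_n$ produces the desired ${\bf u}_n^*$, with $\omega_n^*=\int_0^1\Omega_n\,\drm x\to\overline\Omega$. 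The principal obstacle is obtaining the operator-norm convergence $DF_n\to DF$ on $C([0,1])$ required for the Lyapunov--Schmidt reduction to be uniform in $n$: cut-norm convergence of $W_n$ directly yields only weak operator bounds (for instance an $L^\infty\to L^1$ bound, or an $L^2\to L^2$ bound with a square-root loss). One must therefore combine cut-norm convergence with sup-norm degree convergence $\|d_{W_n}-d_W\|_\infty\to 0$ (which handles the multiplicative term in $DF_n$) and the equicontinuity supplied by \ref{Wcompactness} (which makes the integral-operator part of $L$ compact in $C\to C$) in order to upgrade these weak bounds to the operator-norm convergence needed to drive the reduction uniformly in $n$.
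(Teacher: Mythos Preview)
Your Lyapunov--Schmidt strategy is a legitimate alternative to the paper's center-manifold reduction (for steady-state bifurcations the two are essentially equivalent), and using the fixed graphon projection $P$ rather than an $n$-dependent one is fine in principle. However, the proposal has three genuine gaps.

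First, $F_n$ does not map $X$ into $X$: since $\Omega_n$ and $W_n$ are step functions, $F_n(u,\omega_n^*,K)$ is discontinuous even for continuous $u$, so the reduction cannot be carried out in $X$. The paper works instead in the larger space $X_n$ of mean-zero functions that are piecewise continuous on the $I_j^n$. Second, and more seriously, the operator-norm convergence $\|DF_n-DF\|_{\infty\to\infty}\to 0$ that you hope to extract from cut-norm, degree, and equicontinuity is false in general (the paper says so explicitly). What actually holds is $\|L_n-L\|_{2\to 2}\to 0$; the paper uses this together with Kato's perturbation theory for spectral convergence in $L^2$, and then \emph{bootstraps} the required estimates (closeness of eigenfunctions, uniform boundedness of $(L_n^s)^{-1}$) to $L^\infty$ by exploiting the explicit splitting of $L_n$ into a multiplication operator $Q_n(x)$ plus a Hilbert--Schmidt integral operator, solving for $v$ pointwise and applying H\"older. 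Your final sentence correctly flags the obstacle, but the proposed fix (obtaining sup-norm operator convergence directly) would fail; the correct mechanism is $L^2$ convergence followed by this pointwise bootstrap. Third, ``projecting onto $X_n$'' does not produce solutions of $G_n=0$: you must show that solutions of $F_n=0$ are genuinely piecewise \emph{constant} on the $I_j^n$, which the paper proves in a separate lemma by showing that a non-constant solution on some $I_j^n$ would force the multiplication part $Q_n(x)$ (equivalently, the essential spectrum) to vanish somewhere, contradicting its stability.
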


The proofs of the above theorems are left to the latter sections of this paper, before which we demonstrate an application of our results in the following section. We will first prove Theorem~\ref{thm:Bifurcation} in Section~\ref{sec:CentreManifoldProof}. Then, in Section~\ref{sec:PersistenceProof} we provide a commentary on the proof of Theorem~\ref{thm:Persistence} being similar, but ultimately simpler. That is, the proof of Theorem~\ref{thm:Bifurcation} is shown to be a more involved version of Theorem~\ref{thm:Persistence} since one must account for the zero eigenvalue assumed by Hypothesis~\ref{hyp:SNgraphon}. This will be explained in more detail as one proceeds through the proof sections.

%%%%%%%%%%%%%%%%%%%%%%%%%%%%%%%%%%%%%%%%%%%%%%%%%
\section{Applications to Erd\H{o}s--R\'enyi Networks}\label{sec:ApplicationER}

In this section we apply our results to Erd\H{o}s--R\'enyi networks. The goal here is to work with a simplified model that can elucidate much of our theory, while also providing some technical details that indicate that bifurcations to synchrony in the graphon model do not always come from a simple saddle-node bifurcation. That is, we see that different choices for the function $\Omega$ in \eqref{FFunction} can lead to bifurcations to synchrony coming from the essential spectrum, which in turn means that our results in Theorem~\ref{thm:Bifurcation} cannot be applied as the assumptions are not satisfied. Nonetheless, away from these bifurcations our results in Theorem~\ref{thm:Persistence} can always be applied to demonstrate persistence of synchronous states onto large finite networks of coupled oscillators.

%%%%%%%%%%%%%%%%%%%%%%%%%%%%%%%%%%%%%%%%%%%%%%%%%%
\subsection{Synchronous States and the Critical Threshold}\label{sec:Ermentrout}

To begin, let us suppose that frequencies are drawn from a distribution with probability density function $f:[-1,1] \to \mathbb{R}$. Then, the cumulative distribution function is given by 
\begin{equation}
    F(x) = \int_{-1}^x f(s)\mathrm{d}s,
\end{equation}
so that the connection between the frequencies in \eqref{Kuramoto} and the function $\Omega$ in \eqref{KuramotoGraphon} then comes from setting $\Omega(x) = F^{-1}(x)$. 

Such a connection was already established in Ermentrout's pioneering work \cite{ermentrout1985synchronization}.  Depending on the choice of $\Omega$, our results herein return Ermentrout's result to the finite-dimensional system to show that the Kuramoto critical coupling on large all-to-all networks is well-estimated by that of the graphon models' for randomly distributed frequencies. This further complements the results of \cite{dorfler2011critical} which provides asymptotically non-sharp bounds in $n$ on the critical coupling for all-to-all networks by providing the precise limiting critical coupling value as $n \to \infty$ (with high probability).

These results go beyond \cite{ermentrout1985synchronization,dorfler2011critical} by being applicable to random networks as well. To illustrate, consider \eqref{KuramotoGraphon} with the Erd\H{o}s--R\'enyi graphon $W(x,y) = p$, for some $p \in (0,1]$, for all $(x,y) \in [0,1]^2$. The corresponding finite-dimensional Kuramoto model \eqref{Kuramoto} is posed on a randomly generated network $\mathbb{G}(n,p)$ which assigns edges $A_{i,j} = A_{j,i} = 1$ with probability $p$. The  continuum model takes the form 
\begin{equation}\label{KuramotoER}
    \frac{\partial \theta}{\partial t} = \Omega(x) + Kp\int_0^1 \sin(\theta(y,t) - \theta(x,t))\mathrm{d}y.       
\end{equation}
Note that with $p = 1$ we are in the all-to-all setting of \cite{ermentrout1985synchronization}, while $p < 1$ simply acts to scale the coupling coefficient in the  continuum model. Thus, we can provide the following adaptation of Ermentrout's analysis to characterize the conditions that guarantee whether or not synchronous patterns exist in \eqref{KuramotoGraphon} with an Erd\H{o}s--R\'enyi graphon.

\begin{prop}\cite[Proposition~2]{ermentrout1985synchronization}\label{prop:Ermentrout} 
    Let $\gamma = \sup |\Omega(x) - \overline{\Omega}|/(Kp)$, where $\overline{\Omega} = \int_0^1 \Omega(x)\mathrm{d}x$. A solution to \eqref{KuramotoER} is
    \begin{equation}
        \theta(x,t) = \overline{\Omega} t + u(x)
    \end{equation}
    where $\sin(u(x)) = [\Omega(x) - \overline{\Omega}]/(Kpq\gamma)$ and $(\gamma,q)$ are related through
    \begin{equation}\label{GammaEqn}
        \gamma = \frac{1}{q^2}\int_{-1}^1 \sqrt{q^2 - s^2}f(s)\mathrm{d}s,
    \end{equation}
    where $f = \frac{\mathrm{d}}{\mathrm{d} x} \Omega^{-1}$ is the probability density function of the frequencies. If $\gamma > 1$, synchronization will not occur for this solution. 
\end{prop}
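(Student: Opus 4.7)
The plan is to substitute the phase-locked ansatz $\theta(x,t) = \overline{\Omega} t + u(x)$ directly into \ref{KuramotoER} and reduce the resulting functional equation to a scalar self-consistency relation. Expanding $\sin(u(y)-u(x)) = \sin u(y)\cos u(x) - \cos u(y)\sin u(x)$ and introducing the scalar averages $a := \int_0^1 \sin u(y)\,\mathrm{d}y$ and $b := \int_0^1 \cos u(y)\,\mathrm{d}y$, the steady equation reduces to
\begin{equation*}
    \Omega(x) - \overline{\Omega} = Kp\bigl[b\sin u(x) - a\cos u(x)\bigr].
\end{equation*}
I would then solve algebraically for $\sin u(x)$ under the ansatz $a = 0$, obtaining $\sin u(x) = (\Omega(x)-\overline{\Omega})/(Kpb)$; integrating this formula over $[0,1]$ and invoking the definition of $\overline\Omega$ returns $a = 0$ a posteriori, so the ansatz is self-consistent.

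Next I would take the positive branch $\cos u(x) = \sqrt{1 - \sin^2 u(x)}$, which selects the dynamically stable family of fixed points, and integrate it to obtain the scalar self-consistency equation
\begin{equation*}
    b = \int_0^1 \sqrt{1 - \left(\frac{\Omega(x)-\overline{\Omega}}{Kpb}\right)^2}\,\mathrm{d}x.
\end{equation*}
A change of variable $s = \Omega(x)-\overline{\Omega}$, together with the identification of $f = (\Omega^{-1})'$ as the frequency density so that $\mathrm{d}x = f(s+\overline{\Omega})\,\mathrm{d}s$, converts this into an integral over the frequency axis. After shifting to the comoving frame so that $\overline{\Omega}=0$, setting $q := Kpb$, and using the normalization $\sup|\Omega| = 1$ (so that $\gamma = 1/(Kp)$), the identity rearranges directly into
\begin{equation*}
    \gamma = \frac{1}{q^{2}}\int_{-1}^{1}\sqrt{q^{2} - s^{2}}\,f(s)\,\mathrm{d}s.
\end{equation*}
The nonexistence claim when $\gamma > 1$ then follows from the elementary bound $|b\sin u(x)| \le 1$, which forces $\sup|\Omega(x)-\overline{\Omega}| \le Kp$ as a necessary condition for a real-valued $u$; equivalently, $\gamma \le 1$ is required.

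The main (mild) obstacle I expect is treating the assumption $a = 0$ in a principled manner, since the balance equation does not pin down $u$ uniquely on its own. I would handle this by imposing $a = 0$ as part of the ansatz and then verifying it a posteriori from the explicit formula for $\sin u$; equivalently, one fixes a canonical representative modulo the translational symmetry $u \mapsto u + c$ inherited from the $S^{1}$-symmetry of \ref{KuramotoER}. A secondary point is the choice of the positive square-root branch for $\cos u(x)$, which corresponds to the stable solution; the negative branch yields an unstable companion that falls outside the scope of the proposition. Apart from these bookkeeping issues, the proof is a direct change of variables in a self-consistency integral.
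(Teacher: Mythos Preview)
The paper does not actually supply a proof of this proposition: it is quoted verbatim as \cite[Proposition~2]{ermentrout1985synchronization} and used as input for the subsequent analysis in Section~\ref{sec:ApplicationER}. Your derivation is the standard one (and essentially Ermentrout's): substitute the phase-locked ansatz, use the angle-addition formula to reduce the integral to the two scalars $a=\int_0^1\sin u$ and $b=\int_0^1\cos u$, fix the gauge so that $a=0$, and then close the self-consistency loop by the change of variable $s=\Omega(x)$ with $\mathrm{d}x=f(s)\,\mathrm{d}s$. The identification $q=Kpb$ together with $\gamma=1/(Kp)$ under the normalization $\sup|\Omega-\overline\Omega|=1$ recovers \eqref{GammaEqn} exactly.

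Two small clean-ups. First, your nonexistence argument should read $|\sin u(x)|\le 1$ (not $|b\sin u(x)|\le 1$): this gives $\sup|\Omega-\overline\Omega|\le Kpb$, and then the separate bound $b=\int_0^1\cos u\le 1$ yields $\gamma\le 1$. Second, the a~posteriori verification of $a=0$ is cleaner if you phrase it as choosing the phase of the order parameter $re^{i\psi}=\int_0^1 e^{iu(y)}\mathrm{d}y$ so that $\psi=0$; this is exactly the $S^1$ gauge-fixing you mention and makes transparent that no generality is lost. With those tweaks your argument is complete and matches what the cited reference does.
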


Ermentrout goes on to study the synchronization threshold, i.e. the smallest value of $K$ for which these synchronous solutions exist, using the equation \eqref{GammaEqn}. Indeed, the onset of synchronization happens at
\begin{equation}\label{gammaStar}
    \gamma^* = \max_{q \geq 1} \frac{1}{q^2}\int_{-1}^1 \sqrt{q^2 - s^2}f(s)\mathrm{d}s, 
\end{equation}
which in turn leads to $\Kcrit = \frac{1}{p\gamma^*}$, where we note the inclusion of the graphon probability parameter $p \in (0,1]$. With this information, we provide the following lemma that can be used to verify the hypotheses of our main theorems.  We draw the attention of the reader to \cite{mirollo05,mirollo07} for analogous stability results and calculations of the phase locked state in discrete and  continuum versions of the Kuramoto model.   For simplicity, we will restrict ourselves to functions $\Omega$ that are odd over the midpoint $x = 1/2$, which is equivalent to considering probability distributions $f$ that are even over $[-1,1]$. This further gives that $\overline\Omega = 0$.

\begin{lem}\label{lem:StableER}
    Suppose that $\Omega(x)$ is odd over $x = 1/2$, $K\geq \Kcrit$ and let $u^*(x)$ be the synchronous solution of \eqref{FFunction} guaranteed by Proposition~\ref{prop:Ermentrout}. Then, the spectrum of  
    \begin{equation}
        DF(u^*,\overline\Omega,K)v = Kp\int_0^1\cos(u^*(y)-u^*(x))[v(y)-v(x)]\mathrm{d}y,   
    \end{equation} 
    as an operator on $X$ is real and broken into the essential and point spectrum (eigenvalues). Defining $\kappa = Kpq\gamma$ and $C = \int_0^1 \cos(u^*(y))\mathrm{d}y$, we have 
    \begin{enumerate}
        \item The essential spectrum is given by the interval $\sigma_{\mathrm{ess}} = \left[-KpC, -Kp\sqrt{1-\frac{1}{\kappa^2}}C \right]$
        \item $0$ is an eigenvalue of $DF(u^*,\overline\Omega,K)$ on the space $X$ if
        \begin{equation}\label{ZeroEigCondition}
            \frac{1}{\kappa C} \int_0^1 \frac{\Omega^2(y)}{\sqrt{\kappa^2-\Omega^2(y)}}\mathrm{d}y = 1.
        \end{equation}
    \end{enumerate}
Moreover, the synchronous solution is stable if 
    \[
        \frac{1}{\kappa C} \int_0^1 \frac{\Omega^2(y)}{\sqrt{\kappa^2-\Omega^2(y)}}\mathrm{d}y < 1.
    \]
\end{lem}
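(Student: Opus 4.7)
The plan is to decompose the linearization as multiplication by a scalar function plus a finite-rank operator, read off the essential spectrum from the multiplication part, and then characterize the point spectrum via a $2\times 2$ algebraic system that the odd symmetry of $\Omega$ decouples.

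First, expanding $\cos(u^*(y)-u^*(x))$ and using $\sin(u^*(x)) = \Omega(x)/\kappa$ together with $\int_0^1 \sin(u^*(y))\,\mathrm{d}y = \overline\Omega/\kappa = 0$, the linearization rewrites as
\begin{equation}
DFv(x) = -KpC\alpha(x)v(x) + Kp\alpha(x)\langle \alpha,v\rangle + Kp\beta(x)\langle \beta,v\rangle,
\end{equation}
with $\alpha = \cos u^*$, $\beta = \sin u^*$, and $\langle f,g\rangle := \int_0^1 f(y)g(y)\,\mathrm{d}y$. The first term is multiplication by $-KpC\alpha$ and the remainder is a rank-two, hence compact, operator. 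By the compact-perturbation invariance of the essential spectrum, $\sigma_{\mathrm{ess}}(DF)$ coincides with the spectrum of the multiplication operator, which on $C([0,1])$ is the range of $-KpC\alpha$. Since $\alpha(x) = \sqrt{1-\Omega(x)^2/\kappa^2}$ and $\Omega([0,1]) = [-1,1]$, this range is precisely the interval claimed in (i).

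For the point spectrum I solve $DFv = \lambda v$ for $\lambda$ off $\sigma_{\mathrm{ess}}$, rearranging to
\begin{equation}
v(x) = \frac{Kp[\alpha(x)a + \beta(x)b]}{\lambda + KpC\alpha(x)}, \qquad a = \langle \alpha,v\rangle,\ b = \langle \beta,v\rangle,
\end{equation}
and substituting back into the defining integrals for $a$ and $b$. Oddness of $\Omega$ about $x = 1/2$ makes $\alpha$ even and $\beta$ odd about $1/2$, so $\int_0^1 \alpha\beta/(\lambda + KpC\alpha)\,\mathrm{d}x = 0$ and the $2\times 2$ system decouples into the scalar equations
\begin{equation}
a\Bigl[1 - Kp\textstyle\int_0^1 \tfrac{\alpha^2}{\lambda + KpC\alpha}\mathrm{d}x\Bigr] = 0, \qquad b\Bigl[1 - Kp\textstyle\int_0^1 \tfrac{\beta^2}{\lambda + KpC\alpha}\mathrm{d}x\Bigr] = 0.
\end{equation}
The mean-zero constraint $\int v = 0$ further reduces by the same symmetry to $a\int \alpha/(\lambda + KpC\alpha)\,\mathrm{d}x = 0$; since the integral has constant nonzero sign off $\sigma_{\mathrm{ess}}$, necessarily $a = 0$ and eigenvalues in $X$ come only from the $b$-branch. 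Reality of these eigenvalues follows from the fact that $\cos(u^*(y)-u^*(x))$ is a symmetric kernel, which makes $DF$ self-adjoint on $L^2([0,1])$, and continuous eigenfunctions lift to $L^2$. Setting $\lambda = 0$ in the $b$-branch and evaluating $\int \beta^2/\alpha = \kappa^{-1}\int \Omega^2/\sqrt{\kappa^2-\Omega^2}$ recovers exactly (ii), with eigenvector $v^* \propto \beta/\alpha = \Omega(\cdot)/\sqrt{\kappa^2-\Omega(\cdot)^2}$.

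Stability then follows from a monotonicity argument: the function $h(\lambda) := Kp\int \beta^2/(\lambda + KpC\alpha)\,\mathrm{d}x$ is strictly decreasing on the component of the resolvent set to the right of $\sigma_{\mathrm{ess}}$ with $h(\lambda) \to 0$ as $\lambda \to \infty$, while $h(\lambda) < 0$ for $\lambda < -KpC$. Hence $h(0) < 1$ precludes any non-negative eigenvalue, and together with $\sigma_{\mathrm{ess}} \subset (-\infty, 0)$ this yields strict negativity of the full spectrum. The main obstacle is making the essential-spectrum reduction rigorous on the non-Hilbert space $C([0,1])$: verifying that multiplication by a continuous function has essential spectrum equal to its range in this setting, and that the rank-two perturbation is compact in the supremum norm. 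Once those points are handled, the symmetry-driven decoupling and the scalar analysis of $h(\lambda)$ are routine.
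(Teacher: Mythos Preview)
Your proof is correct and follows essentially the same route as the paper: decompose $DF$ as multiplication by $-KpC\cos u^*$ plus a rank-two operator, read off $\sigma_{\mathrm{ess}}$ as the range of the multiplication part, decouple the eigenvalue problem via the odd/even symmetry of $\Omega$ about $x=1/2$, and use monotonicity of the resulting scalar function in $\lambda$ for stability. Your exclusion of the $a$-branch via the mean-zero constraint (forcing $a=0$ for \emph{all} $\lambda$ off $\sigma_{\mathrm{ess}}$) is in fact slightly more complete than the paper's treatment, which only checks $\lambda=0$ explicitly; the technical point you flag about the essential spectrum on $C([0,1])$ is handled in the paper by citing \cite[Lemma~4.1 and Lemma~5.1]{bramburger2024persistence}.
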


\begin{proof} 
The even symmetry of the cosine coupling function in the linearization $DF(u^*,\overline\Omega,K)$ allows one to conclude that the operator is self-adjoint on $L^2$. Thus, the spectrum is entirely real as an operator on $L^2$. Further, \cite[Lemma~5.1]{bramburger2024persistence} proves that the spectrum of this linearization is equivalent on $L^2$ and $C([0,1])$, and since $X$ is a subspace of $C([0,1])$, it follows that the spectrum of $DF(u^*,\overline\Omega,K):X \to X$ is contained in the real line. We now proceed to characterize parts of this real spectrum and prove the stated proposition.

First, recall from the discussion above that $\overline\Omega = \int_0^1\Omega(x)\mathrm{d}x=0$ since we are assuming that $\Omega(x)$ is odd over $x = 1/2$. Then, for $K \geq \Kcrit$ we recall from Proposition~\ref{prop:Ermentrout} that $u^*(x) = \arcsin \left(\Omega(x)/\kappa\right)$, where $\kappa = Kpq\gamma\geq 1$ is as given in the statement of the lemma. Using the angle difference identity for cosine we obtain
\begin{equation}
\begin{split}
    DF(u^*,0,K)v&=Kp\cos(u^*(x))\int_0^1 \cos(u^*(y))v(y)\mathrm{d}y +Kp\sin(u^*(x))\int_0^1 \sin(u^*(y))v(y)\mathrm{d}y \\
    &\quad - \left(Kp\cos(u^*(x)) \int_0^1 \cos(u^*(y))\mathrm{d}y\right)  v(x) 
\end{split}
\end{equation}
A spectral decomposition of operators of this form on the Banach Space $X$ was obtained in Lemma~4.1 of \cite{bramburger2024persistence}. In particular, the essential spectrum $\sigma_{ess}$, is comprised of the set of $\lambda \in \mathbb{C}$ lying in the range of the multiplication part of the operator. We therefore have 
\[ 
    \sigma_{\mathrm{ess}} = \left[-Kp \int_0^1 \cos(u^*(y))\mathrm{d}y, -Kp\sqrt{1-\frac{1}{\kappa^2}} \int_0^1\cos(u^*(y))\mathrm{d}y \right],
\]
which when $\kappa>1$ the above interval is a strict subset of $(-\infty,0)$, while for $\kappa=1$ it includes the point $0$.  

Next, we study the point spectrum (eigenvalues) of $DF(u^*,0,K)$. We therefore seek continuous functions $v^*(x)$ such that $DFv^*=\lambda v^*$.  To condense notation we let $c(x)=\cos(u^*(x))$ and note that $\sin(u^*(x))=\Omega(x)/\kappa$. It therefore holds that any eigenpair $(v^*,\lambda)$ must satisfy
\[ 
    \lambda v^*(x)=Kp c(x)\int_0^1 c(y)v^*(y)\mathrm{d}y +Kp\frac{1}{\kappa} \Omega(x)\int_0^1 \frac{1}{\kappa} \Omega(y) v^*(y) \mathrm{d}y -Kp c(x)v^*(x) \int_0^1c(y)\mathrm{d}y. 
\]
Letting $\lambda^*=\frac{\lambda}{Kp}$ then we reduce to finding functions $v^*$ satisfying
\begin{equation}\label{eq:ABCformforv} 
    \lambda^* v^*(x)=Ac(x)+B\Omega(x)-Cc(x) v^*(x), 
\end{equation}
where
\begin{equation}\label{ABCdefinitions}
    \begin{split}
        A&= \int_0^1 c(y)v^*(y)\mathrm{d}y \\
        B&= \frac{1}{\kappa^2}\int_0^1 \Omega(y) v^*(y)\mathrm{d}y \\
        C& = \int_0^1 c(y)\mathrm{d} y. 
    \end{split}
\end{equation}
From \eqref{eq:ABCformforv} we find that 
\[ 
    v^*(x)=\frac{Ac(x)+B\Omega(x)}{Cc(x)+\lambda^*}. 
\]
Now, since $\Omega(x)$ is odd over $x = 1/2$, it follows that $c(x)$ is even over $x = 1/2$, and so we have the useful fact
\[ 
    \int_0^1 \frac{c(y)\Omega(y)}{Cc(y)+\lambda^*}\mathrm{d}y =0
\]
With this fact we obtain the solvability conditions 
\begin{equation}\label{SolvabilityConditions}
    \begin{split}
        A&=A\int_0^1 \frac{c^2(y)}{Cc(y)+\lambda^*}\mathrm{d}y \\
        B&=\frac{B}{\kappa^2} \int_0^1 \frac{\Omega(y)^2}{Cc(y)+\lambda^*}\mathrm{d}y
    \end{split}
\end{equation}

Focusing on the first integral in \eqref{SolvabilityConditions} we note that when $\lambda^*=0$ the condition reduces to 
\[ 
    \frac{1}{C} \int_0^1 c(y)\mathrm{d}y=1,  
\]
which holds due to the definition of the constant $C$ in \eqref{ABCdefinitions}. We therefore recover that $v^*(x)=1$ is an eigenfunction of the operator $DF(u^*,0,K)$ with eigenvalue zero, corresponding to the translational invariance in the phase variable. However, this function does not lie in the space $X$ as it does not have mean zero. We therefore turn to the second integral in \eqref{SolvabilityConditions} and focus on solutions of $I(\lambda^*)=1$ where
\begin{equation}\label{IEquation}
    I(\lambda^*)=\frac{1}{\kappa^2}\int_0^1 \frac{\Omega^2(y)}{Cc(y)+\lambda^*}\mathrm{d}y
\end{equation}
From \eqref{SolvabilityConditions} the candidate function $v^*(x)$ is an eigenfunction if $I(\lambda^*)=1$. Taking $\lambda^* = 0$ then requires one to solve 
\[ 
     1 = \frac{1}{\kappa C} \int_0^1 \frac{\Omega^2(y)}{\sqrt{\kappa^2-\Omega^2(y)}}\mathrm{d}y, 
\]
as stated in the lemma. Finally, since eigenvalues occur whenever $I(\lambda^*)=0$, the fact that $\partial_{\lambda^*}I(\lambda^*)<0$ combined with $I(0)<1$ implies stability of the synchronous solution.  This completes the proof.
\end{proof} % End of proof

One can see from the above result that if $\kappa = 1$ then the essential spectrum of the linearization $DF(u^*,\overline\Omega,K)$ contains 0, thus meaning that neither of our results can be applied. As it turns out, not all bifurcations to synchrony in the graphon model \eqref{KuramotoER} are the result of a simple saddle-node bifurcation, as described in Theorem~\ref{thm:Bifurcation}, but can be attributed to bifurcations from the essential spectrum. In the following subsections we elucidate these cases in more detail for the reader, showing when our results can be applied and when they cannot.

%%%%%%%%%%%%%%%%%%%%%%%%%%%%%%%%%%%%%%%%%%%%%%%%%%
\subsection{Bifurcations from the Essential Spectrum}\label{sec:EssentialSpectrum}

%The reviewer is correct in their interpretation of our intent. The goal of Section 4.2 is to offer additional insight into the onset of synchronization predicted by Ermentrout, a regime that lies beyond the scope of our theoretical analysis. Although our analytical framework does not apply in this setting, the numerical results shown in Figures 1–3 suggest that key features of our main conclusions may nonetheless persist in this more analytically challenging case. In particular, the simulations indicate that even when synchronization does not arise through a codimension-one saddle-node bifurcation, both the critical coupling strength and the qualitative shape of the synchronized solutions remain well predicted by the continuum limit, including for random graph realizations, as described in Proposition 4.1.

 In this subsection, we provide additional insight into the onset of synchronization predicted by Proposition~\ref{prop:Ermentrout} in regimes where our theoretical analysis does not apply. Specifically, we show that there exist frequency distributions for which the onset of synchrony in the continuum limit is not governed by a codimension-one saddle-node bifurcation, but instead by a more intricate mechanism involving the essential spectrum. Despite this increased analytical complexity, the numerical results presented here suggest that the central features of our main conclusions persist. In particular, the simulations indicate that both the critical coupling strength and the qualitative shape of the synchronized solutions are still well captured by the continuum limit, including for random graph realizations, even when synchronization arises through bifurcations involving the essential spectrum.

Let us begin by considering the case where frequencies are drawn from the uniform distribution on $[-1,1]$. In this case the probability density function is $f(\omega) = \frac{1}{2}$, resulting in $\Omega(x) = 2x -1$. One can further find that $\gamma^*$ in \eqref{gammaStar} occurs when $q = 1$, meaning that $\kappa = Kpq\gamma$ takes the value $\kappa = 1$ at the critical coupling $\Kcrit = \frac{1}{p\gamma^*} = \frac{4}{\pi p}$. Thus, we find that at this critical coupling parameter the essential spectrum of the linearization $DF(u^*,\overline\Omega,\Kcrit)$ does not satisfy the conditions to apply Theorem~\ref{thm:Bifurcation}.

We can take this further by performing general calculations for any $\kappa \geq 1$. With $\Omega(x) = 2x - 1$ we first have that 
\[ 
    C=\int_0^1\sqrt{1-\left(\frac{2x-1}{\kappa}\right)^2}\mathrm{d}x =\frac{1}{2\kappa}  \int_{-1}^1\sqrt{\kappa^2-y^2}\mathrm{dy}=\frac{\kappa}{4} \left[\arcsin\left(\frac{y}{\kappa}\right) +\frac{y}{\kappa^2}\sqrt{\kappa^2-y^2}  \right]_{-1}^1, 
\]
so that 
\[ 
    C=\frac{\kappa}{2} \arcsin\left({\frac{1}{\kappa}}\right) +\frac{1}{2\kappa} \sqrt{\kappa^2-1}. 
\]
On the other hand,  computing the integral in (\ref{IEquation}) we obtain  
\[ 
    \frac{1}{\kappa} \int_0^1\frac{(2x-1)^2}{\sqrt{\kappa^2-(2x-1)^2}}\mathrm{d}x =\frac{1}{\kappa}\int_{-1}^1 \frac{y^2}{\sqrt{\kappa^2-y^2}}\mathrm{d}y=\frac{1}{\kappa}\left[ -y \sqrt{\kappa^2-y^2}\right]_{-1}^1+\frac{1}{2\kappa}\int_{-1}^1 \sqrt{\kappa^2-y^2}\mathrm{d}y.   
\]
Notice that the final integral in the above expression is exactly $C$ from above, and so putting this all together we obtain
\begin{equation} \label{eq:Icompinbasiccase} 
    \frac{1}{C} \frac{1}{\kappa}\int_0^1 \frac{\Omega^2(y)}{\sqrt{\kappa^2-\Omega^2(y)}}\mathrm{d}y = 1-\frac{1}{C\kappa} \sqrt{\kappa^2-1}.
\end{equation}
Thus, from Lemma~\ref{lem:StableER} we see that there can only be a zero eigenvalue when $\kappa = 1$, which is the case discussed previously. The implication of these calculations is that for any $\kappa>1$ the synchronous solution is spectrally stable with respect to perturbations in the Banach space $X$ and further satisfies the conditions to apply Theorem~\ref{thm:Persistence}. 

\begin{figure}[t] %Figure: Critical theshold of E-R graphs
\center
\includegraphics[width = 0.75\textwidth]{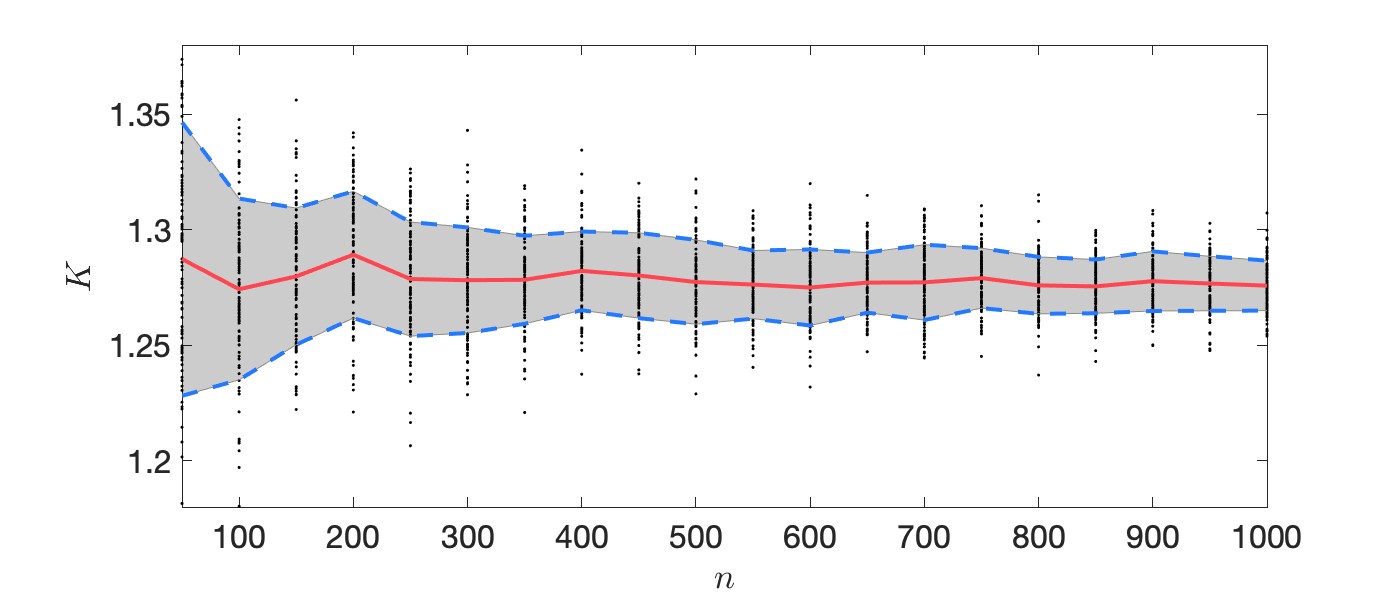} \\  
\includegraphics[width = 0.75\textwidth]{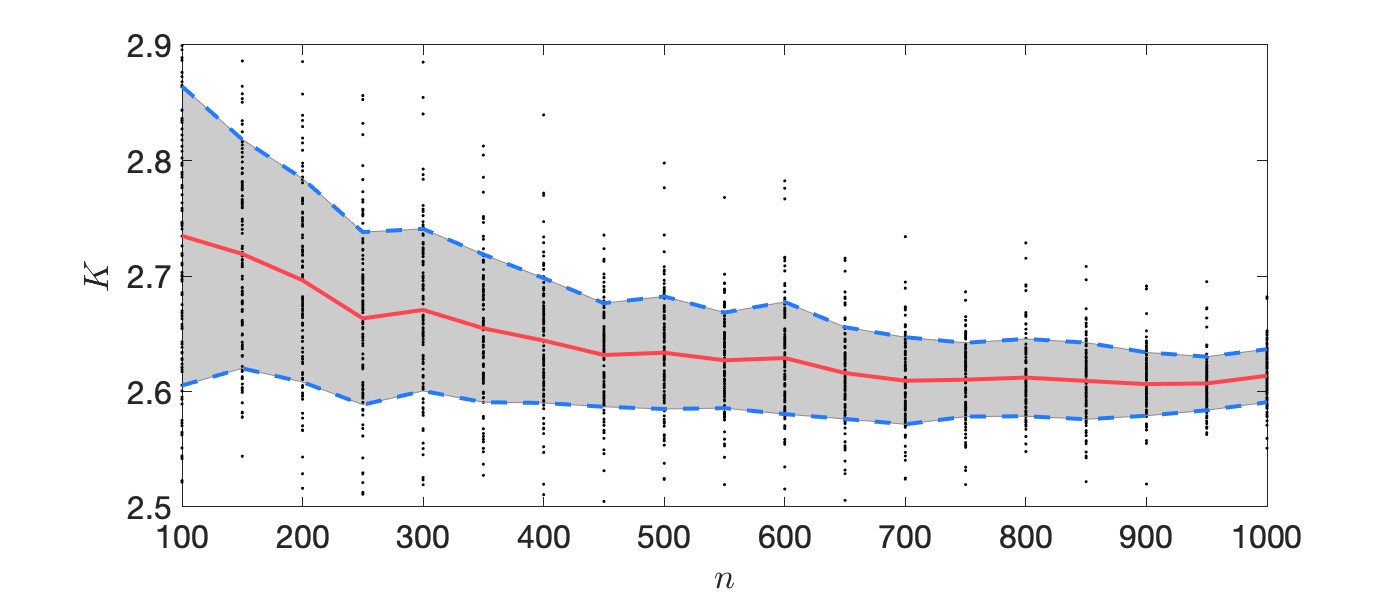}   
\caption{Each black dot on both figures is the location of the critical coupling value of \eqref{Kuramoto} on an Erd\H{o}s--R\'enyi graph with edge probability $p = 1$ (top) and $p = 0.5$ (bottom) and frequencies $\omega_j$ drawn from the uniform distribution on $[-1,1]$. For each network size $n$ there are 100 random realizations of system \eqref{Kuramoto}, with the red line representing the mean across $n$ and the shaded region bounded by blue dashed lines denoting one standard deviation from the mean.}
\label{fig:ER_CriticalTheshold}
\end{figure}

Our result in Theorem~\ref{thm:Persistence} can be applied to any $K > \Kcrit$ to provide stable synchronous solutions over random Kuramoto networks. However, since the essential spectrum of the linearization includes zero at $K = \Kcrit$, we cannot analytically confirm the proximity of the onset of synchrony in random networks to that of the graphon model. Nonetheless, we are able to provide numerical results that appear to confirm that our results of Theorem~\ref{thm:Bifurcation} still hold. Figure~\ref{fig:ER_CriticalTheshold} presents the identified critical coupling for 100 realizations of random Kuramoto models of size $n = 50,100,\dots,1000$, represented by black dots. We also provide the mean critical coupling value (red line) and the shaded region enclosed by blue lines represents one standard deviation from the mean. We provide results for all-to-all networks ($p = 1$) and Erd\H{o}s--R\'enyi random networks $(p = 0.5)$. For all-to-all networks the mean critical coupling at $n = 1000$ is 1.2758, compared with the graphon value of $4/\pi \approx 1.2372$, while the Erd\H{o}s--R\'enyi networks have mean 2.6137, compared with their graphon value of $8/\pi\approx 2.5465$. While the relative error for Erd\H{o}s--R\'enyi networks is only $\sim 2\%$, this larger value compared to the all-to-all networks is attributed to the slower convergence of Erd\H{o}s--R\'enyi graphs to their graphon in the cut norm.      

\begin{figure}[t] %Figure: Solution profiles with uniformly distributed frequencies
\center
\includegraphics[width = 0.45\textwidth]{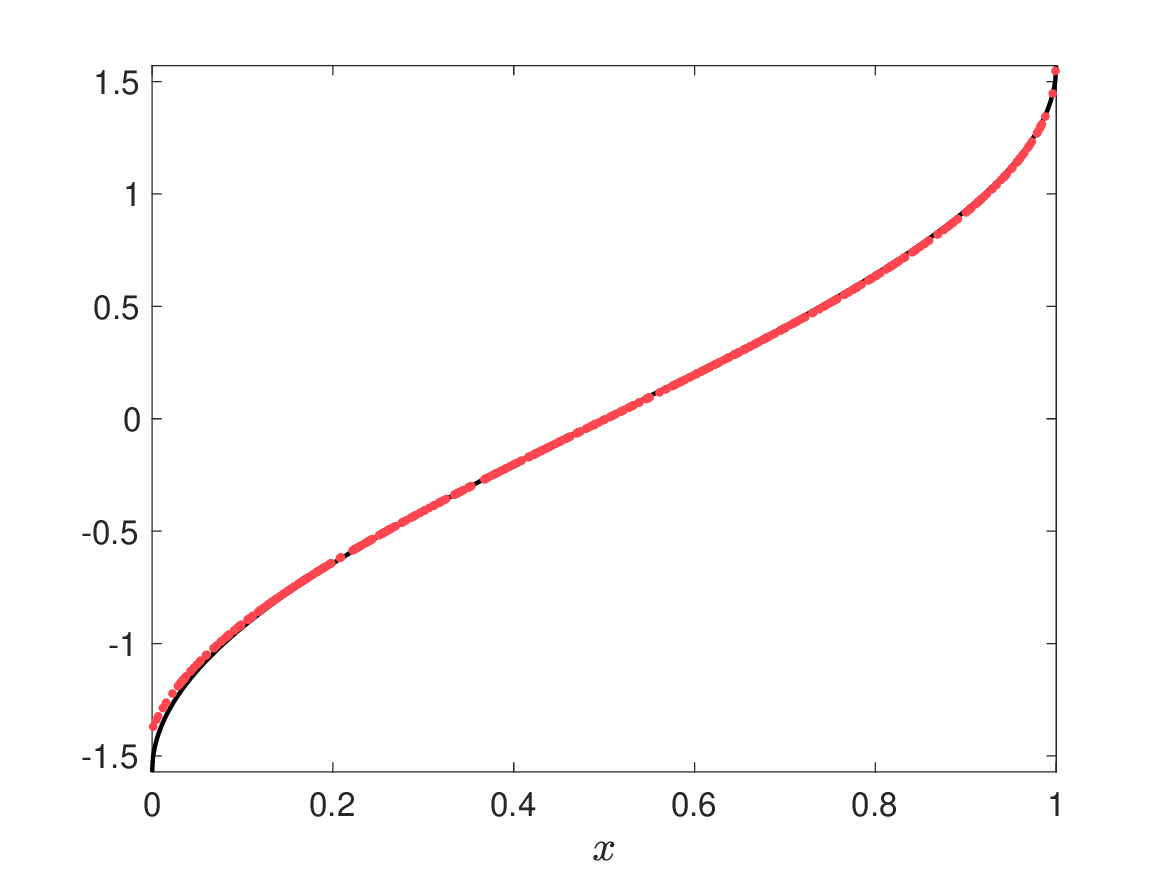} 
\includegraphics[width = 0.45\textwidth]{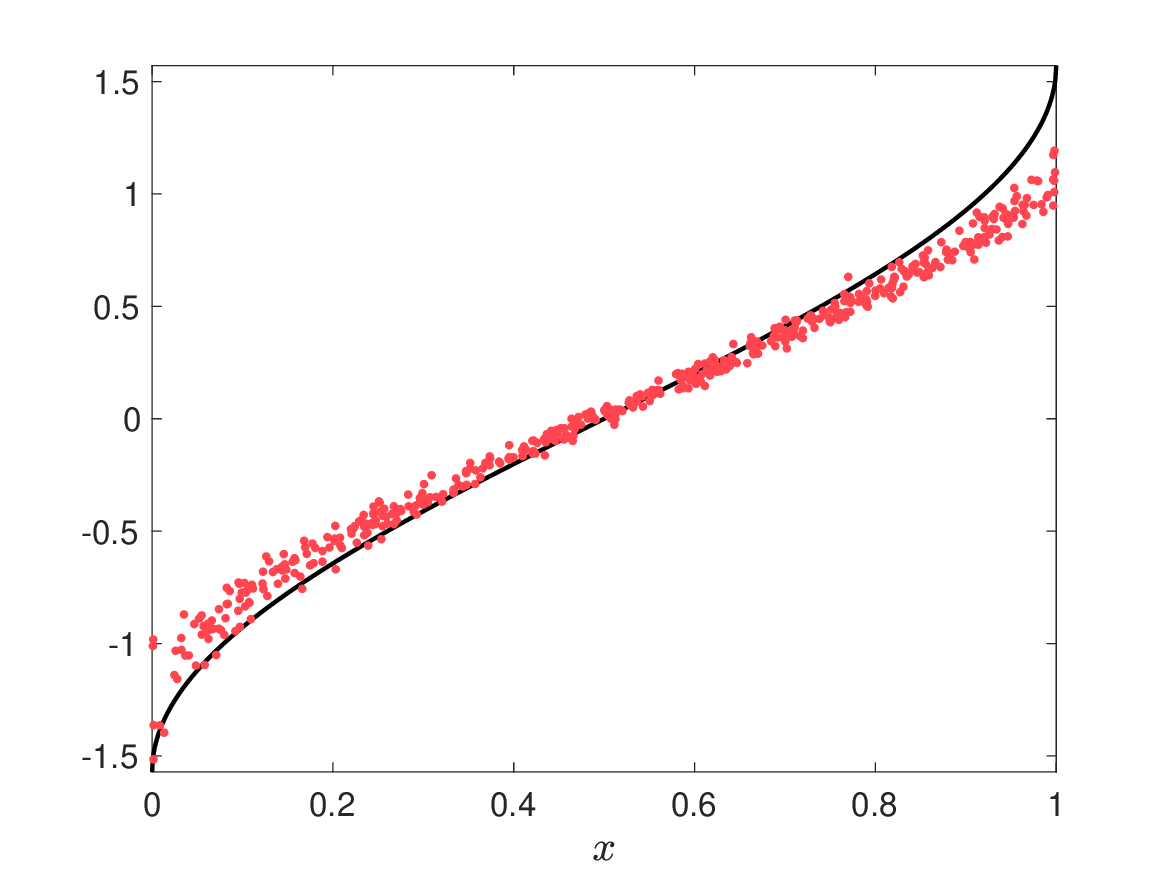}   
\caption{Comparison of the synchronous solution at the critical coupling with $n = 500$ oscillators and frequencies drawn from the uniform distribution (red dots) against the continuum synchronous profile $\theta(x) = \arcsin(2x - 1)$ (black line) to \eqref{KuramotoER}. Synchronous solutions are plotted as $\{(x_j,\theta_j)\}_{j = 1}^{500}$ with each $x_j$ drawn independently from the uniform distribution on $[0,1]$ to generate the frequencies $\omega_j = \Omega(x_j)$. Left: All-to-all coupling ($p=1$). Right: Erd\H{o}s--R\'enyi random network $(p =0.5)$.}
\label{fig:ER_Sols}
\end{figure}

We can also use Proposition~\ref{prop:Ermentrout} to compare the profiles of the synchronous solutions in random Kuramoto models to those of the graphon model \eqref{KuramotoER}. Figure~\ref{fig:ER_Sols} compares the graphon solution $u(x) = \arcsin(2x - 1)$ to the synchronous profiles on a $n = 500$ oscillator network at its critical coupling value. Synchronous solutions are plotted as $\{(x_j,\theta_j)\}_{j = 1}^{500}$, where $x_j$ are drawn independently from the uniform distribution on $[0,1]$. Recall that the frequencies are given by $\omega_j = \Omega(x_j)$. Again, we see strong agreement with the graphon prediction, particularly in the case of all-to-all networks ($p = 1$). The Erd\H{o}s--Renyi network $(p = 0.5)$ shows more random fluctuations in the profile, coming from the random network topology, but still retains the same basic profile as that predicted by the graphon model.

\begin{figure}[t] %Figure: Solution profiles with Cauchy distributed frequencies
\center
\includegraphics[width = 0.45\textwidth]{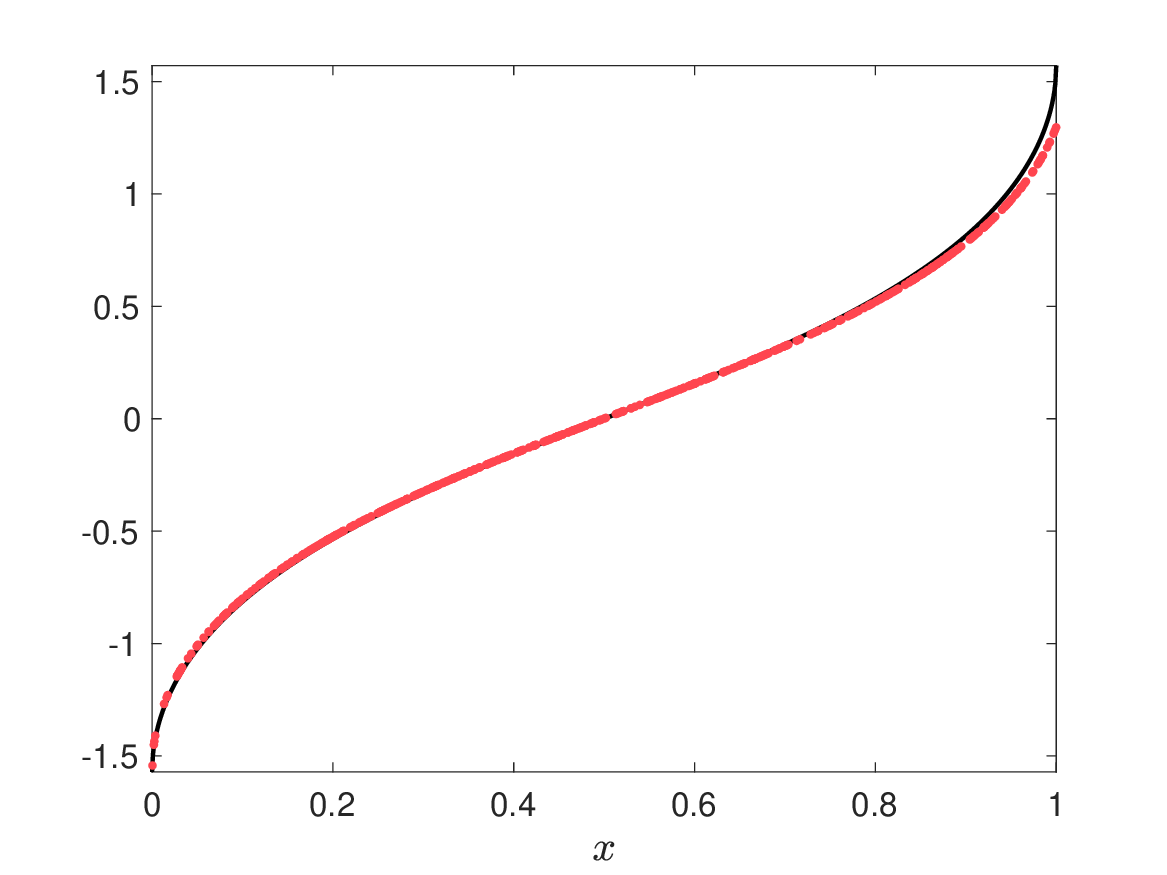} 
\includegraphics[width = 0.45\textwidth]{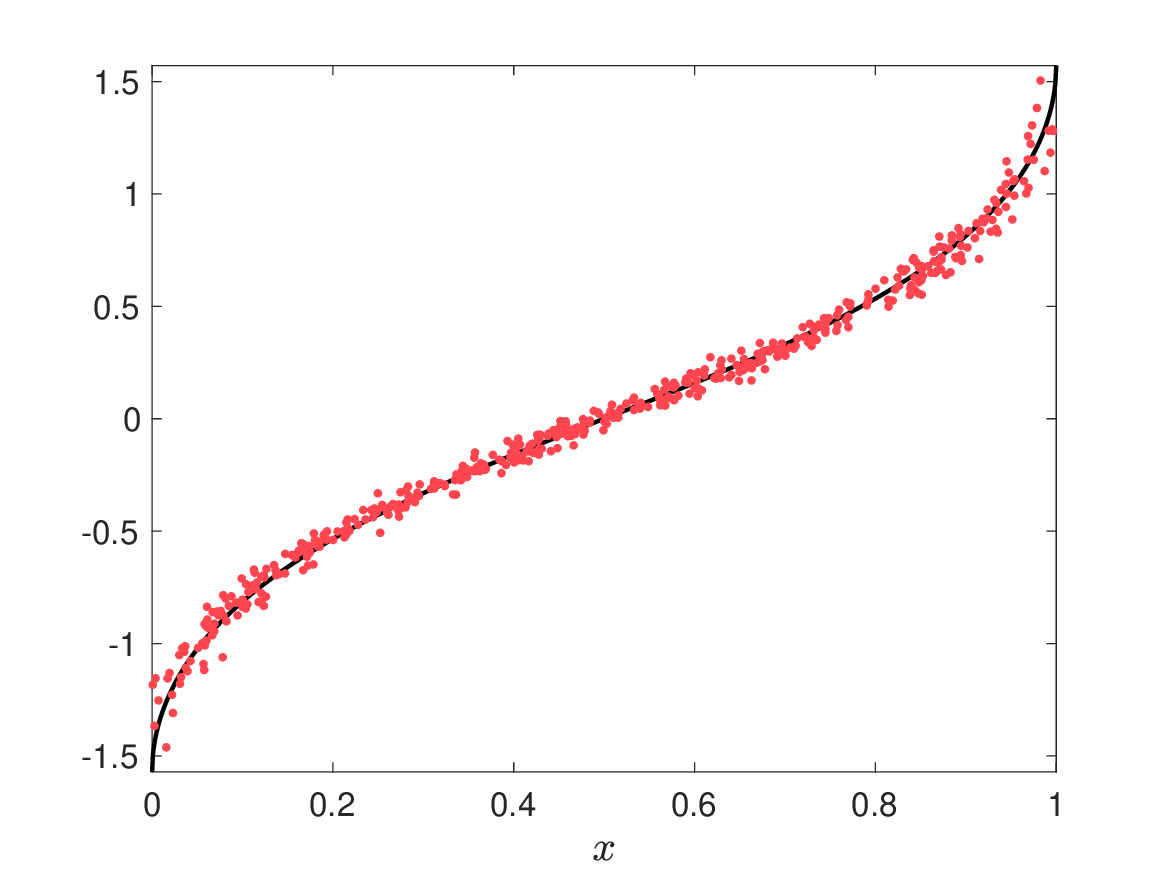}   
\caption{Comparison of the synchronous solution at the critical coupling with $n = 500$ oscillators and frequencies drawn from the Cauchy distribution with density $f(\omega) = \frac{2}{\pi(1 + \omega^2)}$ (red dots) against the continuum synchronous profile $\theta(x) = \arcsin(\tan(\frac{\pi}{4}(2x - 1)))$ (black line) to \eqref{KuramotoER}. Synchronous solutions are plotted as $\{(x_j,\theta_j)\}_{j = 1}^{500}$ with each $x_j$ drawn independently from the uniform distribution on $[0,1]$ to generate the frequencies $\omega_j = \Omega(x_j)$. Left: All-to-all coupling ($p=1$). Right: Erd\H{o}s--R\'enyi random network $(p =0.5)$.}
\label{fig:Cauchy_Sols}
\end{figure}

Having $\kappa =1$ at the critical coupling value is not unique to uniformly distributed frequencies. In \cite{ermentrout1985synchronization} Ermentrout identifies numerous distributions for which $\gamma^*$ occurs when $q = 1$, thus having the essential spectrum of the linearization about the solution guaranteed by Proposition~\ref{prop:Ermentrout} touch the imaginary axis in the complex plane. Again we emphasize that our results in Theorem~\ref{thm:Persistence} can be applied away from $K = \Kcrit$, while despite Theorem~\ref{thm:Bifurcation} not being applicable to describe the onset of synchrony in random Kuramoto networks, it appears that similar results to Theorem~\ref{thm:Bifurcation} still hold in this more complex situation of bifurcations from the essential spectrum. 

To better emphasize the point here, we provide another demonstration. Consider frequencies drawn from the Cauchy distribution, $f(\omega) = \frac{2}{\pi(1 + \omega^2)}$, which in this case gives $\gamma^* \approx 0.8284$, occurring at $q = 1$ in \eqref{GammaEqn}. With this distribution \eqref{KuramotoER} has $\Omega(x) = \tan(\frac{\pi}{4}(2x - 1))$, giving a synchronous solution profile of $u(x) = \arcsin(\tan(\frac{\pi}{4}(2x - 1)))$ at the critical coupling value $\Kcrit \approx 1/(0.8284p)$. Figure~\ref{fig:Cauchy_Sols} presents the same results as Figure~\ref{fig:ER_Sols}, but now with frequencies drawn from the Cauchy distribution. The all-to-all $(p = 1)$ network of $n = 500$ oscillators has a critical coupling value of $1.2100$, compared with $\Kcrit$ taking the value $1.2071$, while this realization of an Erd\H{o}s--R\'enyi network $(p = 0.5)$ of $n = 500$ oscillators has critical coupling $2.4265$, with  $\Kcrit$ being $2.4272$ (since $p = 0.5$ here).

%%%%%%%%%%%%%%%%%%%%%%%%%%%%%%%%%%%%%%%%%%%%%%%%%%
\subsection{Co-dimension One Bifurcations to Synchrony}

%While the previous subsection raises a curious situation in which our bifurcation results of Theorem~\ref{thm:Bifurcation} do not apply, there still remains many situations where it does.

We now turn our attention to the situation where the bifurcation to synchrony occurs through a saddle-node bifurcation due to an isolated eigenvalues crossing the imaginary axis.  In particular, any probability distribution for which $\gamma^*$ in \eqref{gammaStar} occurs at a value $q > 1$ will necessarily give $\kappa > 1$, which in turn provides that the essential spectrum is bounded away from the imaginary axis per Lemma~\ref{lem:StableER}. This then allows for the application of our results in Theorem~\ref{thm:Bifurcation}, while away from any bifurcation point we further have the persistence results of Theorem~\ref{thm:Persistence}.

\begin{figure} %Figure: Solution profiles with Cauchy distributed frequencies
\center
\includegraphics[width = \textwidth]{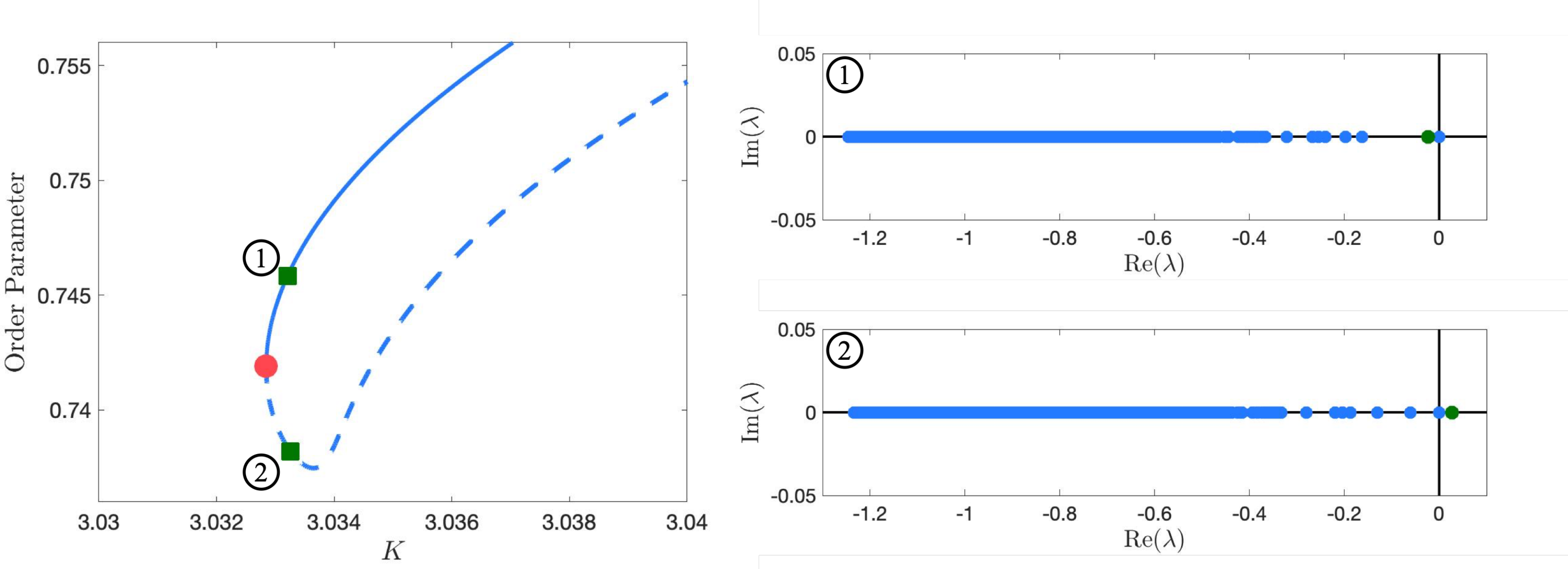}   
\caption{Left: Continuation of synchronous solutions in $n = 500$ oscillator random Kuramoto model on an Erd\H{o}s--R\'enyi network with $p = 0.5$ and frequencies distributed according to the density \eqref{CosineDistribution}. Plotted is the order parameter \eqref{OrderParam} versus the coupling coefficient $K$ with a saddle-node bifurcation leading to onset of synchronization denoted by a red dot at $K \approx 3.0328$. Linearized stability is indicated by a solid curve, while unstable solutions are along dashed curves. Right: Eigenvalues of the linearization about the synchronous solution at two points along the bifurcation curve indicated by green squares, showing a single eigenvalue cross at the bifurcation point (emphasized in green).}
\label{fig:ER_Cosine}
\end{figure}

For example, consider frequencies drawn from the distribution with density
\begin{equation}\label{CosineDistribution}
    f(\omega) = \frac{1}{\pi\sqrt{1 - \omega^2}},
\end{equation}
which has cumulative distribution function $F(\omega) = \frac{1}{2} - \frac{1}{\pi}\arcsin(\omega)$ and in turn gives $\Omega(x) = -\cos(\pi x)$. In this case one may compute that $\gamma^* \approx 0.6715$, occurring at $q \approx 1.1002$. Since the maximizing value of $q$ is larger than 1, it follows that at $\Kcrit \approx 1.4892/p$ we have $\kappa < 1$. Moreover, we can check that the zero eigenvalue condition \eqref{ZeroEigCondition} is true, thus giving a standard saddle-node bifurcation at $\Kcrit$ to synchronous solutions in the graphon model. At $K = \Kcrit$ we have $\kappa = q \approx 1.1002$, which gives
\[
    \begin{split}
        C = \int_0^1\sqrt{1 - \cos^2(\pi y)/\kappa^2} \mathrm{d}y &\approx 0.7388 \\
        \implies \ \frac{1}{\kappa C} \int_0^1 \frac{\cos^2(\pi y)}{\sqrt{\kappa^2-\cos^2(\pi y)}}\mathrm{d}y &\approx 1.0000, 
    \end{split}
\]
thus at least numerically confirming the presence of a zero eigenvalue at $K = \Kcrit$ which is separated from the essential spectrum.

In Figure~\ref{fig:ER_Cosine} we see our results in application through the continuation of synchronous solutions in an $n = 500$ oscillator Kuramoto model on an Erd\H{o}s--R\'enyi network with $p = 0.5$ and frequencies drawn from the distribution with density given by \eqref{CosineDistribution}. We plot the order parameter, given by 
\begin{equation}\label{OrderParam}
    r = \frac{1}{n}\bigg|\sum_{j = 1}^n \mathrm{e}^{\mathrm{i}\theta_j}\bigg|,
\end{equation}
at a synchronous solution against the coupling coefficient $K$. The bifurcation to synchrony in Figure~\ref{fig:ER_Cosine} comes from a saddle-node bifurcation at $K \approx 3.0328$, a 2\% relative error from the graphon prediction of $\Kcrit \approx 2.9784$. By following the eigenvalues of the Kuramoto system linearized about the synchronous solution, we find that a single eigenvalue crosses zero at the critical coupling point (denoted by a red dot)\footnote{There is always an eigenvalue at 0 that corresponds to the translational invariance of the Kuramoto system. Our analysis is performed in the space $X$ precisely to quotient out this eigenvalue.}. The synchronous solutions along the upper curve in the bifurcation diagram are the finite-dimensional analogues of the graphon solution $u^*$ given in Proposition~\ref{prop:Ermentrout}, as guaranteed to exist by Theorem~\ref{thm:Persistence}. The synchronous solution at the saddle-node bifurcation point is plotted in Figure~\ref{fig:Cosine_Sols} and compared to $\theta(x) = \arcsin(-\cos(\pi x)/1.1002)$, coming from Proposition~\ref{prop:Ermentrout} at $q = 1.1002$. For further comparison, we also plot a solution from an all-to-all network ($p = 1$) at its critical coupling point $K \approx 1.4756$, with a $1\%$ relative error of the graphon model prediction of $\Kcrit \approx 1.4892$. We do not include a bifurcation diagram for this case as it is nearly identical to that in Figure~\ref{fig:ER_Cosine}.  

\begin{figure} %Figure: Solution profiles with Cosine distributed frequencies
\center
\includegraphics[width = 0.45\textwidth]{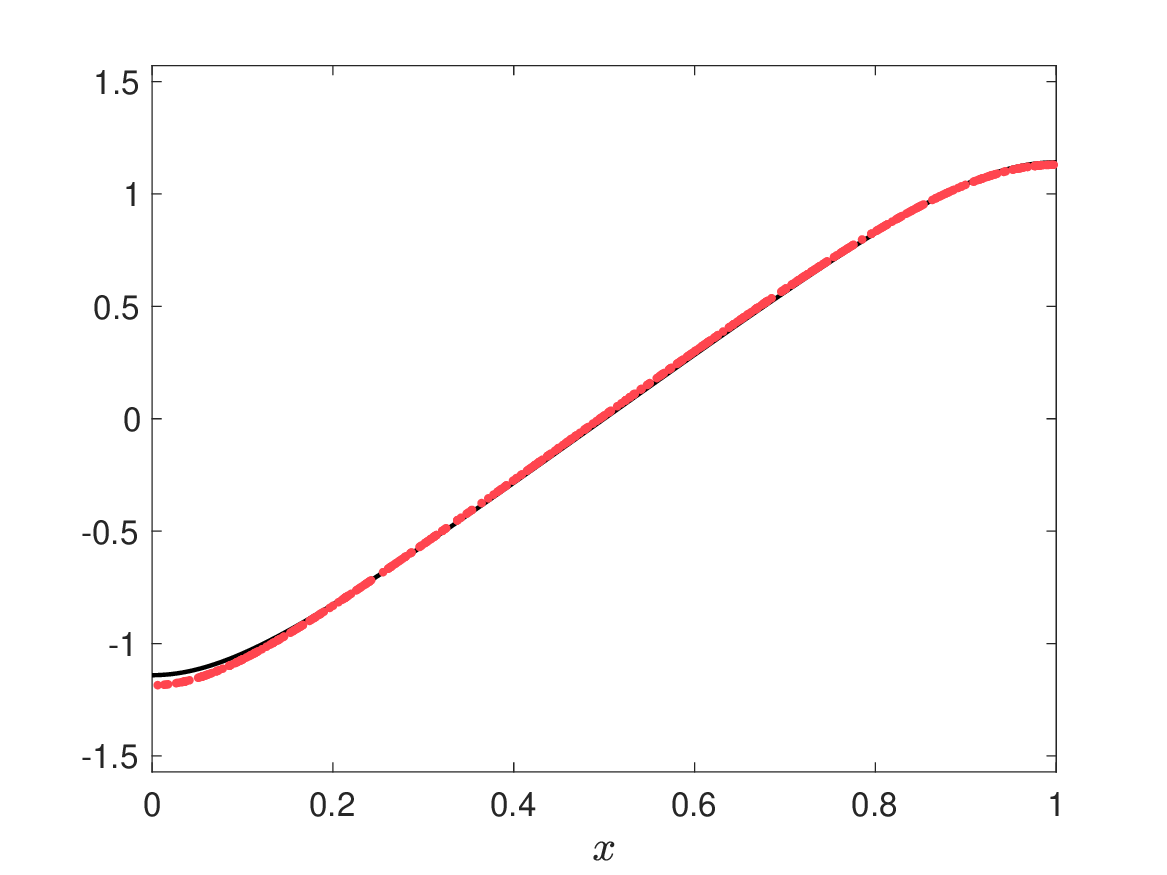} 
\includegraphics[width = 0.45\textwidth]{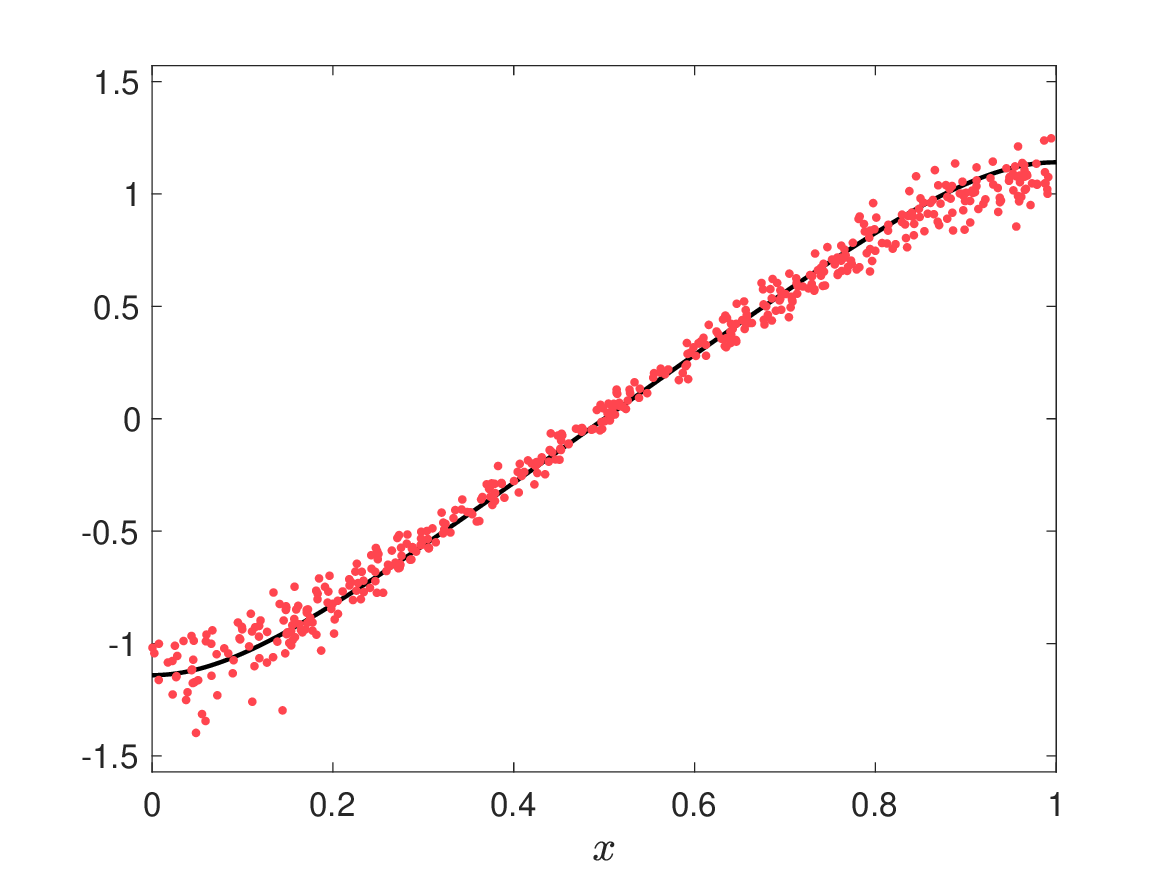}   
\caption{Comparison of the synchronous solution at the critical coupling with $n = 500$ oscillators and frequencies drawn from the distribution with density \eqref{CosineDistribution} (red dots) against the continuum synchronous profile $\theta(x) = \arcsin(-\cos(\pi x)/1.1002)$ (black line) to \eqref{KuramotoER}. Synchronous solutions are plotted as $\{(x_j,\theta_j)\}_{j = 1}^{500}$ with each $x_j$ drawn independently from the uniform distribution on $[0,1]$ to generate the frequencies $\omega_j = \Omega(x_j)$. Left: All-to-all coupling ($p=1$). Right: Erd\H{o}s--R\'enyi random network $(p =0.5)$.}
\label{fig:Cosine_Sols}
\end{figure}

%%%%%%%%%%%%%%%%%%%%%%%%%%%%%%%%%%%%%%%%%%%%%%%%%%%%%%%%%%%%%%%%%%%%%%%%%%%%%%%%%%%%%%%%%%%%%%%%%%%%%%%%
\section{Proof of Theorem~\ref{thm:Bifurcation}}\label{sec:CentreManifoldProof}

In this section, we prove Theorem~\ref{thm:Bifurcation}. In particular, we show that under Hypotheses~\ref{hyp:ConvergenceSequences} and \ref{hyp:SNgraphon} a saddle-node bifurcation to synchrony in the graphon model \eqref{KuramotoGraphon} implies the existence of a saddle-node bifurcation to synchrony in the discrete model \eqref{Kuramoto} for $n \gg 1$ occurring at a critical coupling constant $\Kncrit\to \Kcrit$ as $n \to \infty$. We consider the graphon equation 
\begin{equation} \label{eq:maingraphon}
    \frac{du}{dt}= \Omega(x) - \overline\Omega +K\int_0^1 W(x,y)\sin(u(y,t))-u(x,t)))\mathrm{d} y, 
\end{equation} 
with $\overline\Omega = \int_0^1 \Omega(x)\mathrm{d}x$ fixed throughout. Then, as in Section~\ref{sec:Results}, we define 
\[ 
    F(u,K)= \Omega(x) - \overline\Omega + K \int_0^1 W(x,y)\sin(u(y)-u(x))\mathrm{d} y,
\]
where we suppress the dependence of $F$ on $\overline\Omega$ since it is fixed throughout. By definition, steady-state solutions of \eqref{eq:maingraphon} at a fixed value of $K$ correspond to solutions of the equation $F(u,K) = 0$.  

To analyze the discrete problem we re-cast the adjacency matrix $A\in\mathbb{R}^{n\times n}$ as a step-graphon $W_n:[0,1]^2\to [0,1]$ and study the nonlocal equation 
\begin{equation} \label{eq:mainstepgraphon}
    \frac{du_n}{dt}=\Omega_n(x)-\omega^*_n +K\int_0^1 W_n(x,y)\sin(u_n(y,t)-u_n(x,t))\mathrm{d} y, 
\end{equation} 
where $\omega^*_n=\int\Omega_n(x)\mathrm{d}x$ is also fixed throughout and $W_n(x,y)$ is the step graphon representation of the graph. Notice that with this choice of $\omega^*_n$ we have 
\[
    |\overline{\Omega} - \omega_n^*| = \bigg|\int_0^1 [\Omega(x) - \Omega_n(x)]\mathrm{d}x \bigg| \to 0,
\]
as $n \to \infty$, coming from the assumption that $\|\Omega - \Omega_n\|_\infty \to 0$ as $n\to \infty$ in Hypothesis~\ref{hyp:ConvergenceSequences}. 

This section is broken down as follows. First, in \S~\ref{sec:CMgraphon} we provide a center manifold reduction for \eqref{eq:maingraphon} and prove that a saddle-node bifurcation takes place at $\Kcrit$ when we assume Hypothesis~\ref{hyp:SNgraphon}. Then, in \S~\ref{sec:CMStepgraphon} we perform a similar center manifold reduction for the step graphon model \eqref{eq:mainstepgraphon}, in turn showing that a saddle-node bifurcation takes place at $\Kncrit$ nearby $\Kcrit$ when $n \gg 1$. Finally, in \S~\ref{sec:PiecewiseConstant} we show that our obtained solutions to \eqref{eq:mainstepgraphon} are piecewise constant on the intervals $\{I_j^n\}_{j = 1}^n$, which in turn yields that they correspond to steady-states of the finite-dimensional equation \eqref{GFunction}, completing the proof.

%%%%%%%%%%%%%%%%%%%%%%%%%%%%%%%%%%%%%%%%%%%%%%%%%%%%%%%%%%%%%%%%%%%%%%%%%%%%%%%%%%%%%%%%%%%%%%%%%%
\subsection{Center manifold Reduction for the Graphon Equation}\label{sec:CMgraphon}

Hypothesis~\ref{hyp:SNgraphon} outlines sufficient conditions so that the graphon equation (\ref{eq:maingraphon}) undergoes a saddle-node bifurcation describing the emergence of a stable synchronized state when the coupling constant $K$ exceeds the critical coupling constant $\Kcrit > 0$. Associated to this bifurcation is a local center manifold on which the reduced dynamics can be written in the canonical/normal form of a saddle-node bifurcation. Our main result will show that, for $n \gg 1$, the random graph system will also undergo a saddle-node bifurcation to synchrony at some critical value $\Kncrit$ near $\Kcrit$. The construction of a center manifold in that case will rely on properties of the center manifold for the graphon equation. Therefore, in this subsection we will review the construction of a center manifold for the graphon equation \eqref{eq:maingraphon} and the associated reduced dynamics on that center manifold.  

Recall the definition of the closed subspace $X$ of mean-zero functions in $C([0,1])$, introduced earlier as 
\[ 
    X=\left\{ u\in C([0,1]) \ | \ \int_0^1 u(x)\mathrm{d}x =0 \right\}. 
\]
Note that $F$ maps the Banach Space $X$ back into itself. Now, by assumption we have there is some $u^* \in X$ so that at $K = \Kcrit$ that $F(u^*,\Kcrit)=0$. Let $L$ denote the linearization of $F$ about $u^*$ at $(K,\overline\Omega) = (\Kcrit,\int_0^1\Omega(x)\mathrm{d}x)$, acting on functions $v \in X$ by 
\begin{equation} 
    Lv= \Kcrit \int_0^1 W(x,y)\cos(u^*(y)-u^*(x))(v(y)-v(x))\mathrm{d} y.\label{eq:Ldef} 
\end{equation}
We now state the center manifold result for the graphon equation.  

\begin{lem}\label{lem:CMgraphon} 
Under the assumptions of Hypothesis~\ref{hyp:SNgraphon}, there exists $\delta > 0$ and a decomposition $X=X^c\oplus X^s$ so that the system \eqref{eq:maingraphon} has a local center manifold described by the graph $\Psi:X^c\times [\Kcrit-\delta,\Kcrit+\delta]\to X^s$.  The graph is $C^k$ for any $k> 2$. Letting $K=\Kcrit+\tilde{K}$ with $\tilde{K} \in [-\delta,\delta]$, the reduced dynamics on this center manifold are described by the scalar ordinary differential equation
\[ 
    \frac{dw_c}{dt} = a \tilde{K} +b w_c^2 +\mathcal{O}(w_c^3,\tilde{K}w_c,\tilde{K}^2),
\]
where 
\begin{align}  
    a & = -\frac{1}{\Kcrit}\int_0^1 [\Omega(x)-\overline \Omega]v^*(x)\mathrm{d} x \nonumber= \int_0^1\int_0^1 W(x,y)\sin\left(u^*(y)-u^*(x)\right)v^*(x)\mathrm{d}y\mathrm{d}x \\
    b &= -\frac{\Kcrit}{2} \int_0^1\int_0^1 W(x,y)\sin(u^*(y)-u^*(x))\left(v^*(y)-v^*(x)\right)^2 v^*(x)\mathrm{d}y\mathrm{d}x. \label{eq:aandb}
\end{align}
Finally, $\mathrm{sign}\left(ab\right)<0$.
\end{lem}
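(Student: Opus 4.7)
My plan is to construct a local center manifold for \ref{eq:maingraphon} at $(u^*, \Kcrit)$ via a spectral decomposition of $X$ induced by $L$, Taylor-expand $F$ along the center manifold and project onto $\mathrm{span}\{v^*\}$ to read off the reduced dynamics, and then extract the sign of $ab$ directly from Hypothesis~\ref{hyp:SNgraphon}(ii). First, since $W(x,y)$ and $\cos(u^*(y)-u^*(x))$ are each symmetric in $(x,y)$, the operator $L$ is symmetric with respect to the $L^2$-pairing, so Hypothesis~\ref{hyp:SNgraphon}(i) implies that $v^*$ is simultaneously a right and left null vector of $L$. I therefore define the rank-one projection
\[
    P_c u = v^*(\cdot)\int_0^1 v^*(y)u(y)\,\mathrm{d}y,\qquad u\in X,
\]
which is continuous on $(X,\|\cdot\|_\infty)$, satisfies $P_c^2 = P_c$, and commutes with $L$ because $\int_0^1 v^*(x)(Lu)(x)\,\mathrm{d}x = \int_0^1 (Lv^*)(x)u(x)\,\mathrm{d}x = 0$ for every $u\in X$. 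Setting $X^c = \mathrm{range}(P_c) = \mathrm{span}\{v^*\}$ and $X^s = \ker P_c$ gives the desired decomposition $X = X^c\oplus X^s$.

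Next, I would invoke a standard center-manifold theorem for ODEs in a Banach space. The compactness condition \ref{Wcompactness} forces the integral part of $L$ to be a compact operator on $C([0,1])$, so the essential spectrum of $L$ coincides with the range of its multiplicative part, which lies strictly in the open left half-plane. Combined with Hypothesis~\ref{hyp:SNgraphon}(i), this yields a genuine spectral gap between the simple zero eigenvalue and the remainder of $\spec(L)$. Since $F$ is smooth in $(u,K)$ and $L$ is bounded on $X$ with this spectral structure, the theorem produces a $C^k$ local graph $\Psi:X^c\times[\Kcrit-\delta,\Kcrit+\delta]\to X^s$ satisfying $\Psi(0,0)=0$ and $D_{w_c}\Psi(0,0)=0$.

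With $u = u^* + w_c v^* + \Psi(w_c,\tilde K)$, $\tilde K = K - \Kcrit$, and $p := w_c v^* + \Psi$, applying $P_c$ to $\dot u = F(u,K)$ gives the reduced equation $\dot w_c = \int_0^1 v^*(x) F(u^* + p, \Kcrit + \tilde K)\,\mathrm{d}x$. Taylor-expanding $F$ in $p$ and $\tilde K$ about $(0,0)$ and using $F(u^*,\Kcrit)=0$: the linear-in-$p$ term projects to zero by self-adjointness of $L$; the linear-in-$\tilde K$ term equals $\tilde K\, S(x)$ with $S(x) = \int_0^1 W(x,y)\sin(u^*(y)-u^*(x))\,\mathrm{d}y = -(\Omega(x)-\overline\Omega)/\Kcrit$, and projecting by $v^*$ yields $a\tilde K$ in both displayed forms of \ref{eq:aandb}; the quadratic-in-$p$ term is $-\tfrac{K}{2}\int_0^1 W(x,y)\sin(u^*(y)-u^*(x))(p(y)-p(x))^2\,\mathrm{d}y$, and substituting $p = w_c v^* + O(w_c^2+|\tilde K|)$ together with $K = \Kcrit + \tilde K$ and projecting gives $b w_c^2$. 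All remaining contributions (cubic in $w_c$, quadratic in $\tilde K$, and the cross $\tilde K w_c$ arising from the $\tilde K$-linear piece of $\Psi$) are absorbed into the error.

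Finally, multiplying the closed forms of $a$ and $b$ in \ref{eq:aandb} yields
\[
    ab = \tfrac{1}{2}\left(\int_0^1 (\Omega(x)-\overline\Omega)v^*(x)\,\mathrm{d}x\right)\left(\int_0^1\int_0^1 W(x,y)\sin(u^*(y)-u^*(x))(v^*(y)-v^*(x))^2 v^*(x)\,\mathrm{d}x\,\mathrm{d}y\right),
\]
and Hypothesis~\ref{hyp:SNgraphon}(ii) is exactly the statement that the ratio of the second factor to the first is strictly negative, so $ab < 0$. The main technical obstacle is the spectral-gap claim invoked in the center-manifold step: one must verify carefully that \ref{Wcompactness} delivers compactness of the integral part of $L$ on $C([0,1])$ and that the left-half-plane containment asserted in Hypothesis~\ref{hyp:SNgraphon}(i) is uniform, so that a bona fide reduction onto a one-dimensional center direction is possible. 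The remainder of the argument (Taylor expansion, projection, and sign extraction) is routine once the gap is in hand.
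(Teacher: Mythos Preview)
Your proposal is correct and follows essentially the same route as the paper: decompose $X$ via the rank-one projection onto $\mathrm{span}\{v^*\}$, invoke a parameter-dependent center manifold theorem (the paper cites \cite[Theorem~3.3]{haragus11}), Taylor-expand $F$ and project to read off $a$ and $b$, and deduce $\mathrm{sign}(ab)<0$ from Hypothesis~\ref{hyp:SNgraphon}(ii). The paper carries out the spectral-gap verification you flag as the main obstacle in a dedicated lemma (Lemma~\ref{lem:DFprop}), establishing sectoriality of $L^s$, resolvent bounds, and the semigroup estimate $\|e^{L^st}\|\leq Ce^{-\alpha t}$ needed for the Haragus--Iooss framework; it also derives $\Psi_{10}=0$ explicitly from the invariance condition rather than citing tangency, but the content is identical.
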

\begin{proof}
    The result follows from an application of \cite[Theorem~3.3]{haragus11}. We verify that our system satisfies the conditions required for this result in Appendix~\ref{sec:graphonproof}.  We emphasize that the center manifold is local and only valid in some neighborhood of the origin. This requires the use of a cut-off function applied to the nonlinearity to control the Lipschitz constant of the nonlinearity; see Appendix B of \cite{haragus11} and \cite{vanderbauwhede92}.
    
    The sign condition on the coefficients $a$ and $b$ ensures that bifurcating steady-states exist for $K>\Kcrit$ as assumed in Hypothesis~\ref{hyp:SNgraphon}, while a reversal of the sign would simply give that the steady-states exist for $K < \Kcrit$ but not effect any other portion of the proof.  
\end{proof} % end of proof

%%%%%%%%%%%%%%%%%%%%%%%%%%%%%%%%%%%%%%%%%%%%%%%%%%%%%%%%%%%%%%%%%%%%%%%%%%%%%%%%%%%%%%%%%%%%%%%
\subsection{Center-Manifold Reduction for the Step Case}\label{sec:CMStepgraphon}

We now turn our attention to the step function Kuramoto system (\ref{eq:mainstepgraphon}). We will derive center manifold results in analogy to the one obtained for the graphon equation \eqref{eq:maingraphon} that hold for all $n \gg 1$. That is, in this section we will prove the existence of a center manifold and perform a reduction to it for the step function model \eqref{eq:mainstepgraphon}, eventually proving the existence of a saddle-node bifurcation for \eqref{eq:mainstepgraphon} occurring in a neighborhood of the graphon bifurcation point $(\Kcrit,u^*(x))$.  

To begin, let 
\[ 
    u_n(t,x)=u^*(x)+v_n(t,x), \quad K=\Kcrit+\hat{K}.
\]
Then the perturbation $v_n$ satisfies the equation
\begin{equation} 
    \frac{dv_n}{dt} = F_n(u_n,K) := \Omega_n(x) - \omega_n^* +(\Kcrit+\hat{K})\int_0^1 W_n(x,y)\sin(u^*(y) + v_n(t,y) - u^*(x) - v_n(t,x))\mathrm{d}y, \label{eq:vnmain} \end{equation}
where we again suppress the dependence on the frequency $\omega_n^*$ since it will be fixed for each $n \geq 1$ throughout. We perform our analysis in the Banach Space $X_n$ given by
\begin{equation}
    X_n=\left\{u\in L^\infty \ \bigg| \ \int_0^1 u(x)\mathrm{d}x=0 \ \text{and} \ u(x) \ \text{is continuous on each interval} \ \left[\frac{i-1}{n},\frac{i}{n}\right)  \right\}.
\end{equation}
The linearization of $F_n$ about $(u^*,\Kcrit)$ is thus denoted as $L_n$ and acts on functions $v \in X_n$ by
\[ 
    L_nv := DF_n(u^*,\Kcrit)v=\Kcrit \int_0^1 W_n(x,y)\cos(u^*(y)-u^*(x))(v(y)-v(x))\mathrm{d}y, 
\]
The following lemma characterizes the spectrum of the linear operator $L_n$. Recall that $v^* \in X $ is the kernel element of $DF(u^*,\Kcrit)$.  

\begin{lem} \label{lem:specDFn} 
There exists constants $\zeta>0$, $r\in(0,\zeta)$, and $\varepsilon_0 \in (0,\zeta - r)$ such that for all $\varepsilon \in (0,\varepsilon_0)$ there exists an $N \geq 1$ such that for all $n \geq N$ the following is true:
\begin{enumerate}
    \item The linear operator $L_n:X_n\to X_n$ has a simple eigenvalue $\lambda_n$ with $|\lambda_n| < \varepsilon$,
    \item The associated eigenfunction $v_n^*(x) \in X_n$, normalized such that $\int_0^1 [v_n^*(x)]^2\mathrm{d}x =1$, satisfies 
        \begin{equation} \label{eq:vnvnstarestimate}
            \|v_n^*-v^*\|_\infty < \varepsilon,  
        \end{equation}
    for all $n\geq N$,
    \item The remainder of the spectrum lies in the ball $\{z\in\mathbb{C} \ | \ |z+\zeta|<r\}$, and
    \item The spectral projection onto $v_n^*$ is 
        \[ 
            P^c_n f=v^*_n(x)\int_0^1 f(y) v_n^*(y)\mathrm{d}y=v^*_n(x)\langle f, v^*_n\rangle, 
        \]
    with the stable projection defined via $P^s_n=I-P^c_n$.
\end{enumerate}
\end{lem}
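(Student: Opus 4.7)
The plan is to exhibit $L_n$ as an operator-norm perturbation of $L$ on a common ambient Banach space, and then invoke analytic perturbation theory for isolated eigenvalues. Decompose
\begin{equation*}
    Lv(x) = \mathcal{T}v(x) - \Kcrit d(x)v(x), \qquad \mathcal{T}v(x) = \Kcrit \int_0^1 W(x,y)\cos(u^*(y)-u^*(x))v(y)\mathrm{d}y,
\end{equation*}
with $d(x)=\int_0^1 W(x,y)\cos(u^*(y)-u^*(x))\mathrm{d}y$, and let $\mathcal{T}_n$ and $d_n$ be defined analogously using $W_n$ in place of $W$. By Hypothesis~\ref{hyp:SNgraphon}(i), the operator $L$ has a simple zero eigenvalue on $X$ with eigenfunction $v^*$, and the remainder of $\sigma(L)$ is contained in $\{z\in\mathbb{C}:|z+\zeta|<r_0\}$ for some $\zeta>0$ and $r_0<\zeta$. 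Fix $r\in(r_0,\zeta)$ and $\varepsilon_0\in(0,\zeta-r)$ once and for all.

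The first task is to verify that $\mathcal{T}$ is compact on $C([0,1])$; this follows from the equicontinuity of images on bounded sets, furnished by \ref{Wcompactness} together with the expansion $\cos(u^*(y)-u^*(x))=\cos u^*(x)\cos u^*(y)+\sin u^*(x)\sin u^*(y)$ and continuity of $u^*$. The central task is then to establish $\|L_n-L\|_{\mathrm{op}}\to 0$ on the ambient space $L^\infty([0,1])$. For the multiplication part, the identity
\begin{equation*}
    d_n(x)-d(x) = \cos u^*(x)\!\int_0^1 (W_n-W)(x,y)\cos u^*(y)\mathrm{d}y + \sin u^*(x)\!\int_0^1 (W_n-W)(x,y)\sin u^*(y)\mathrm{d}y
\end{equation*}
reduces matters to showing $\sup_x\bigl|\int_0^1(W_n-W)(x,y)\phi(y)\mathrm{d}y\bigr|\to 0$ for $\phi\in\{\cos u^*,\sin u^*\}$; approximating $\phi$ uniformly by step functions on $\{I_j^n\}$ and then applying the degree convergence $\|d_{W_n}-d_W\|_\infty\to 0$ to the resulting step-function integrals yields $\|d_n-d\|_\infty\to 0$. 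A parallel argument, combined with uniform equicontinuity of the images $\{\mathcal{T}_n v:\|v\|_\infty\leq 1\}$ inherited from \ref{Wcompactness} and cut-norm convergence, gives operator-norm convergence $\mathcal{T}_n\to\mathcal{T}$.

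With these convergences in place, standard analytic perturbation theory---upper semicontinuity of the spectrum together with norm continuity of isolated finite-rank spectral projections---applies. For each $\varepsilon\in(0,\varepsilon_0)$ and all $n\geq N$ sufficiently large, the Riesz projection $P_n^c=-\tfrac{1}{2\pi\mbi}\oint_{|z|=\varepsilon}(zI-L_n)^{-1}\mathrm{d}z$ is well defined, has rank one, and converges in operator norm to the spectral projection of $L$ at $0$. This produces the simple eigenvalue $\lambda_n$ with $|\lambda_n|<\varepsilon$; the eigenfunction $v_n^*$ is obtained by projecting $v^*$ through $P_n^c$ and normalizing so that $\int_0^1 [v_n^*(x)]^2\mathrm{d}x=1$, with the estimate $\|v_n^*-v^*\|_\infty<\varepsilon$ following from norm continuity of $P_n^c$. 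A second contour argument around $\{|z+\zeta|=r\}$ confines the remainder of $\sigma(L_n)$ to $\{|z+\zeta|<r\}$. Finally, the projection formula in the fourth claim holds because the kernel $W_n(x,y)\cos(u^*(y)-u^*(x))$ is symmetric in $(x,y)$, making $L_n$ self-adjoint in $L^2$; the right and left eigenfunctions for $\lambda_n$ thus coincide, and the rank-one spectral projection collapses to the stated inner-product form.

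The principal obstacle is the operator-norm upgrade in the second step: $\|W_n-W\|_\square\to 0$ only yields $L^\infty\to L^1$ operator-norm convergence of $\mathcal{T}_n-\mathcal{T}$ via \cite[Lemma E.6]{janson2010graphons}, which is insufficient for perturbation theory on a space of essentially-bounded functions. The degree convergence $\|d_{W_n}-d_W\|_\infty\to 0$ and the compactness property \ref{Wcompactness} must be leveraged together to pass from cut-norm convergence of the kernels to the sup-norm convergence of integral operators required by the abstract perturbation machinery, after which all four claims follow uniformly.
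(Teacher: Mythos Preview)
Your strategy of running the perturbation argument directly in $L^\infty$ has a genuine gap: the claimed operator-norm convergence $\|\mathcal{T}_n-\mathcal{T}\|_{\infty\to\infty}\to 0$ is in general \emph{false}, and neither degree convergence nor the compactness condition \eqref{Wcompactness} can rescue it. Consider the Erd\H{o}s--R\'enyi case $W\equiv p$ with $W_n=W_{\mathbb{G}(n,p)}$: for fixed $x$ in some $I_j^n$, the function $y\mapsto W_n(x,y)-p$ takes values $\pm p$ or $1-p$ on the blocks $I_k^n$; choosing $v\in L^\infty$ to match these signs (a step function with $\|v\|_\infty=1$) makes $\int_0^1(W_n-W)(x,y)v(y)\,\mathrm{d}y$ of order one for every $n$. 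Thus $\|(\mathcal{T}_n-\mathcal{T})\|_{\infty\to\infty}$ stays bounded away from zero. Your ``parallel argument'' for $\mathcal{T}_n$ does not go through because in the multiplication part the integrand in $y$ was a \emph{fixed} continuous function that could be uniformly approximated by step functions; for $\mathcal{T}_n-\mathcal{T}$ the integrand contains the arbitrary test function $v$, and no uniform approximation is available. The equicontinuity claim also fails outright: $\mathcal{T}_n v$ has jump discontinuities across the partition points of $\{I_j^n\}$, since $W_n$ is a step graphon and \eqref{Wcompactness} is a property of $W$, not of $W_n$.

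The paper circumvents this by carrying out the perturbation theory in $L^2$ rather than $L^\infty$. Cut-norm convergence does control $\|T_{W_n}-T_W\|_{2\to 2}$ (via $\|T_W\|_{2\to 2}\lesssim\|W\|_\square^{1/2}$), and combined with $\|d_n-d\|_\infty\to 0$ one obtains $\|L_n-L\|_{2\to 2}\to 0$. Kato's results then give the eigenvalue localization, simplicity, and $\|v_n^*-v^*\|_2\to 0$. The sup-norm estimate \eqref{eq:vnvnstarestimate} is obtained in a separate bootstrap step: writing $L=Q(x)\cdot+T$ and $L_n=Q_n(x)\cdot+T_n$, one subtracts the eigenvalue equations, isolates $Q(x)(v^*-v_n^*)$ on one side, and bounds the remaining integral terms using $\|Q_n-Q\|_\infty\to 0$, the degree-convergence estimate on $\int(W_n-W)(x,y)\phi(y)\mathrm{d}y$ for the fixed continuous $\phi=\cos(u^*(\cdot)-u^*(x))v^*$, and H\"older's inequality together with the already-established $L^2$ convergence of $v_n^*$. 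The key point you are missing is that one never needs $L^\infty$ operator-norm convergence of $L_n$; one only needs it in $L^2$, followed by a pointwise bootstrap exploiting that $Q(x)$ is bounded away from zero.
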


\begin{proof}
    The proof mimics that of Lemma~5.1 and 5.2 of \cite{bramburger2024persistence} and so we sketch the argument here. First, from \cite[Lemma~5.1]{bramburger2024persistence} the spectrum of the linear operator $L$ is equivalent on $C([0,1])$ and $L^2([0,1])$. Next, the arguments in \cite[Lemma~5.2]{bramburger2024persistence} give that $\| L-L_n\|_{2\to 2} \to 0 $ as $n\to \infty$. This follows from the assumption that $\|W_n-W\|_\square \to 0$, $\|d_{W_n}-d_W\|_\infty\to 0$, and $\|\Omega - \Omega_n\|_\infty \to 0$ as $n \to \infty$ in Hypothesis~\ref{hyp:ConvergenceSequences}. Recall that Hypothesis~\ref{hyp:SNgraphon} gives that $L$ has a single eigenvalue at $0$ with eigenfunction $v^*$ with the remainder of the spectrum bounded away from the imaginary axis. Putting all of this together gives that there exists constants $\zeta>0$, $r\in(0,\zeta)$, and $\varepsilon_0 > 0$ so that $B_{\varepsilon_0}(0)\cap \{z\in\mathbb{C} \ | \ |z+\zeta|<r\}$ is empty and, due to \cite[Theorem IV.3.1]{kato}, for any $\varepsilon\in (0,\varepsilon_0)$ there exists a $N$ sufficiently large so that for any $n\geq N$ the spectrum of $L_n: X_n \to X_n$ is contained in the set $\{z\in\mathbb{C} \ | \ |z+\zeta|<r\} \cup B_\varepsilon(0)$. The fact that $B_\varepsilon(0)$ contains a single isolated eigenvalue of $L_n:X_n \to X_n$ with algebraic multiplicity one follows from \cite[Theorem IV.3.16]{kato}. This result also implies that $\|v_n^*-v^*\|_2\to 0$. 
    
    To obtain \eqref{eq:vnvnstarestimate} we decompose the linear operators into the sum of a multiplication operator and an integral operator  as
    \begin{equation} \label{eq:LandLndecomp}
    \begin{split}
        Lv&= Q(x)v+T[v] \\
        L_nv&= Q_n(x)v+T_n[v].
    \end{split}
    \end{equation}
    Note that the essential spectrum of $L$ is exactly the range of $Q(x)$; see \cite[Lemma~4.1]{bramburger2024persistence}. Thus, $Q(x)\neq 0$ since the essential spectrum is assumed to be stable and belongs to the ball $\{z\in\mathbb{C} \ | \ |z+\zeta|<r\}$. Then,
    \begin{equation}
        \begin{split}
            Lv^*-L_nv^*_n&= Q(x)v^*(x)+\int_0^1 W(x,y)\cos(u^*(y)-u^*(x))v^*(y)\mathrm{d}y -Q_n(x)v_n^*(x)\\
            &\qquad -\int_0^1 W_n(x,y)\cos(u^*(y)-u^*(x))v_n^*(y)\mathrm{d}y \\
            &= Q(x)(v^*(x)-v^*_n(x))+(Q(x)-Q_n(x))v^*_n(x) \\
            &\qquad +\int_0^1 (W(x,y)-W_n(x,y))\cos(u^*(y)-u^*(x))v^*(y)\mathrm{d}y \\
            &\qquad + \int_0^1 W_n(x,y)\cos(u^*(y)-u^*(x))(v^*(y)-v^*_n(y))\mathrm{d}y.
        \end{split}
    \end{equation}
Since $Lv^*-L_nv^*_n=\lambda_nv^*_n$ we rearrange the above to obtain
\begin{equation}
\begin{split} Q(x)(v^*(x)-v^*_n(x))&= \lambda_nv^*_n +(Q_n(x)-Q(x))v^*_n(x) \\
            &-\int_0^1 (W(x,y)-W_n(x,y))\cos(u^*(y)-u^*(x))v^*(y)\mathrm{d}y \\
            &- \int_0^1 W_n(x,y)\cos(u^*(y)-u^*(x))(v^*(y)-v^*_n(y))\mathrm{d}y. \end{split}
            \end{equation}
Then $\|v^*-v^*_n\|_\infty$ may now be controlled by the supremum norms of the terms on the right hand side of the previous equation.  In particular, $|\lambda_n|\to 0$ as just shown and  $\|Q_n(x)-Q(x)\|_\infty\to 0$ due to \cite[Lemma 4.10]{bramburger2024persistence}. Since $v^*\in C([0,1])$ we also have that 
\[ 
    \left\|\int_0^1 (W(x,y)-W_n(x,y))\cos(u^*(y)-u^*(x))v^*(y)\mathrm{d}y\right\|_\infty  
\]
can be made arbitrarily small by taking $n$ sufficiently large; see  \cite[Lemma 4.7]{bramburger2024persistence}. Also, H\"older's inequality implies 
\[ 
    \lim_{n \to \infty}\left\| \int_0^1 W_n(x,y)\cos(u^*(y)-u^*(x))(v^*(y)-v^*_n(y))\mathrm{d}y\right\|_\infty \leq \lim_{n \to \infty}\|v^*-v_n^*\|_2 = 0. 
\]
We apply a similar argument  to $L_nv^*_n=\lambda_n v^*_n$, re-writing to obtain
\[ v^*_n(x)= - \frac{1}{Q_n(x)-\lambda_n} \int_0^1 W_n(x,y)\cos(u^*(y)-u^*(x))v^*_n(y)\mathrm{d}y.\]
Then after noting that $Q_n(x)-\lambda_n\neq 0$  applying H\"older's inequality to the second term proves that $\|v^*_n\|_2=1$ implies uniform boundedness of $\|v_n^*\|_\infty$ for all large $n$, which in turn can be used to show that both $\|\lambda_n v_n^*\|_\infty$ and $\|(Q_n-Q)v^*_n\|_\infty$ converge to $0$ as $n \to \infty$. This completes the proof.

\end{proof}

\begin{rmk}
    For an $N \geq 1$ sufficiently large, Lemma~\ref{lem:specDFn} provides the following decomposition of the space $X_n$ for all $n\geq N$: 
    \[ 
        X_n=X_n^c\oplus X_n^s, \qquad X^s_n=\mathrm{Rng}(P^s_n) = \mathrm{ker}(P^c_n)\subset X_n.
    \]
    Moving forward we will let $L^s_n=L_n|_{X^s_n}$, the restriction of $L$ to the stable space $X^s_n$.
\end{rmk}

We now proceed to transform system \eqref{eq:vnmain} into a form suitable for an application of the parameter-dependent center manifold theorem.  The first step is to introduce new coordinates so that the transformed system 
\begin{equation} \frac{d\tilde{v}_n}{dt}=H_n(\tilde{v}_n,\tilde{K}), \label{eq:Heqn} \end{equation}
satisfies $H_n(0,0)=0$.  To accomplish this, we will find solutions of the equations
\begin{equation} \label{eq:Gwrittenout}
    \begin{split}
       0 &= \langle F_n(u^*+v_n,\Kcrit+\hat{K}),v^*_n \rangle \\
       0&= P^s_n \left( F_n(u^*+v_n,\Kcrit+\hat{K})\right). 
    \end{split}
\end{equation}
Denote the system \eqref{eq:Gwrittenout} as $\mathcal{H}_n(\hat{K},v_n)$ where $\mathcal{H}_n:\mathbb{R}\times X^s_n\to \mathbb{R}\times X^s_n$. 

\begin{lem}\label{lem:maptozero} 
    There exists an $N\geq 1$ such that for any $n\geq N$ there exist $K_n^*\in \mathbb{R}$ and $\phi^s_n\in X^s_n$ such that 
    \[ 
        \mathcal{H}_n(K_n^*,\phi^s_n)=0. 
    \] 
    Additionally, it holds that 
    \begin{equation} \label{eq:pertconvinn}
        \lim_{n \to \infty}|K^*_n|= 0, \qquad \lim_{n\to \infty }\|\phi^s_n\|_{\infty}= 0.
        \end{equation}
\end{lem}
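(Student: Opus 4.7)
The plan is to find $(K_n^*,\phi_n^s)$ by a uniform-in-$n$ implicit function theorem applied to the map $\mathcal{H}_n:\mathbb{R}\times X_n^s\to\mathbb{R}\times X_n^s$ whose two components are the projection of $F_n(u^*+v,\Kcrit+\hat K)$ onto the span of $v_n^*$ and onto the stable subspace $X_n^s$, respectively. Three ingredients must be verified: (i) the residual $\mathcal{H}_n(0,0)$ tends to zero in the norm of $\mathbb{R}\times X_n^s$ as $n\to\infty$; (ii) the Jacobian $D\mathcal{H}_n(0,0)$ is invertible with inverse bounded uniformly in $n$; and (iii) the higher-order remainder satisfies uniform Lipschitz estimates on a small ball about the origin. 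A standard contraction-mapping argument then produces $(K_n^*,\phi_n^s)$ whose norm is controlled by a constant times $\|\mathcal{H}_n(0,0)\|$, delivering both the existence statement and the convergence \ref{eq:pertconvinn}.

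For (i), since $F(u^*,\Kcrit)=0$ we may write $F_n(u^*,\Kcrit)=F_n(u^*,\Kcrit)-F(u^*,\Kcrit)$, so that the residual is entirely built from the three discretization differences $\Omega_n-\Omega$, $\omega_n^*-\overline\Omega$, and $W_n-W$. Uniform convergence $\|\Omega_n-\Omega\|_\infty\to0$ comes directly from Hypothesis~\ref{hyp:ConvergenceSequences}, while $|\omega_n^*-\overline\Omega|\leq\|\Omega_n-\Omega\|_\infty$ handles the scalar shift. After applying the angle subtraction identity to $\sin(u^*(y)-u^*(x))$, the integral kernel difference reduces to two operators of the form $x\mapsto\int_0^1[W_n(x,y)-W(x,y)]g(y)\,\mathrm{d}y$ acting on fixed continuous functions $g$. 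Convergence of these in the supremum norm in $x$, required because $X_n\subset L^\infty$ carries the uniform norm, follows by combining cut-norm convergence $\|W_n-W\|_\square\to0$ with the equicontinuity provided by the compactness condition \ref{Wcompactness} on $W$, precisely as in Lemmas~4.7 and 4.10 of \cite{bramburger2024persistence}.

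For (ii), the Jacobian $D\mathcal{H}_n(0,0)$ acting on $(\hat K,\delta v)\in\mathbb{R}\times X_n^s$ is block lower-triangular: since $X_n^s$ is $L_n$-invariant, $L_n\delta v\in X_n^s$ for $\delta v\in X_n^s$, and the functional $\langle\cdot,v_n^*\rangle$ annihilates $X_n^s$ by the definition of $P_n^s$, so the upper off-diagonal entry $\langle L_n\delta v,v_n^*\rangle$ vanishes. The diagonal blocks are the scalar $a_n:=\langle\partial_K F_n(u^*,\Kcrit),v_n^*\rangle$ and the restriction $L_n^s:=L_n|_{X_n^s}$. By Lemma~\ref{lem:specDFn}(iii) the spectrum of $L_n^s$ lies in $\{z\in\mathbb{C}:|z+\zeta|<r\}$, yielding $\|(L_n^s)^{-1}\|\leq(\zeta-r)^{-1}$ uniformly in $n$. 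Using the estimate $\|v_n^*-v^*\|_\infty\to0$ from Lemma~\ref{lem:specDFn}(2) together with the same integral convergence as in (i) gives $a_n\to a$, where $a$ is the coefficient computed in Lemma~\ref{lem:CMgraphon}; Hypothesis~\ref{hyp:SNgraphon}(ii) forces $a\neq0$, so $a_n$ is bounded away from zero for $n\geq N$. The lower off-diagonal entry $P_n^s\partial_K F_n(u^*,\Kcrit)$ is uniformly bounded in $n$, so inversion of the block-triangular Jacobian delivers the required uniform bound on $\|D\mathcal{H}_n(0,0)^{-1}\|$.

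Ingredient (iii) is routine because $F_n$ is smooth in $v$ with all derivatives uniformly bounded in $n$: the graphon $W_n$ and the frequency function $\Omega_n$ take values in $[0,1]$ and $[-1,1]$ respectively, while sine and cosine have bounded derivatives of every order. The principal obstacle in the above program is (i); cut-norm convergence $\|W_n-W\|_\square\to0$ does not by itself yield operator-norm convergence $T_{W_n}\to T_W$ from $L^\infty$ to $L^\infty$, so the equicontinuity condition \ref{Wcompactness} on $W$ is essential for upgrading cut-norm convergence to the uniform-in-$x$ convergence of integrals against fixed continuous test functions that the supremum-norm setting on $X_n^s$ demands.
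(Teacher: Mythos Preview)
Your overall strategy matches the paper's: set up a contraction mapping based on inverting $D\mathcal{H}_n(0,0)$, show the residual $\mathcal{H}_n(0,0)$ tends to zero, and conclude both existence and the convergence \ref{eq:pertconvinn}. The treatments of points (i) and (iii), and the identification of the block-triangular Jacobian in (ii), are essentially those of the paper.

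There is, however, a genuine gap in your step (ii). You assert that the spectral containment $\sigma(L_n^s)\subset\{|z+\zeta|<r\}$ from Lemma~\ref{lem:specDFn} yields $\|(L_n^s)^{-1}\|\leq(\zeta-r)^{-1}$ uniformly in $n$. This inference is valid for normal operators on a Hilbert space, but the contraction must be run on $X_n^s$ equipped with the supremum norm (as you yourself stress in your discussion of (i)), and on a general Banach space the resolvent norm is \emph{not} controlled by the distance from the spectrum. Knowing $0\notin\sigma(L_n^s)$ gives invertibility for each fixed $n$, but a priori the constants $\|(L_n^s)^{-1}\|_{\infty\to\infty}$ could blow up along the sequence. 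The paper addresses this in two stages: first it uses the $L^2$ operator-norm convergence $\|L_n-L\|_{2\to2}\to0$ together with the second resolvent identity $(L_n^s)^{-1}=[I-(L^s-L_n^s)(L^s)^{-1}]^{-1}(L^s)^{-1}$ to obtain a uniform bound on $\|(L_n^s)^{-1}\|_{2\to2}$; then it upgrades this to a uniform sup-norm bound by a contradiction argument exploiting the decomposition $L_n=Q_n+T_n$ into a multiplication part (bounded away from zero) plus an integral part, solving $L_n^s v_n=w_n$ explicitly for $v_n$ and invoking H\"older's inequality to transfer the $L^2$ control of $v_n$ to $L^\infty$. Without this bootstrap your uniform bound on $\|D\mathcal{H}_n(0,0)^{-1}\|$ is not established, and the contraction constant and ball radius in your fixed-point argument may degenerate with $n$.
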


\begin{proof}
Begin by taking $N$ large enough to guarantee that the results of Lemma~\ref{lem:specDFn} holds for all $n \geq N$. Thus, we may assume for the remainder of the proof that the spectrum of $L^s_n$ is contained in the set $\{z\in\mathbb{C} \ | \ |z+\zeta|<r \}$.  This set avoids the imaginary axis and therefore $L^s_n$ is invertible as an operator on $X^s_n$.  

Consider $n$ large and fixed. We proceed as in the proof of the implicit function theorem.  Expand
    \[ 
        \mathcal{H}_n(\hat{K},\phi)= \mathcal{H}_n(0,0)+D\mathcal{H}_n(0,0)\left(\begin{array}{c} \hat{K} \\ \phi\end{array}\right)+\mathcal{N}_n(\tilde{K},\phi), \]
    where 
     \[ 
        D\mathcal{H}_n(0,0)=\left(\begin{array}{cc} \langle \mathcal{W}_n[u^*],v^*_n\rangle  & 0 \\ P^s_n\mathcal{W}_n[u^*] & L^s_n \end{array}\right), 
    \]
where we have introduced the notation $\mathcal{W}_n[u^*] = \int_0^1 W_n(x,y) \sin(u^*(y) - u^*(x))\mathrm{d}y$.
We require $D\mathcal{H}_n(0,0)$ to be invertible, and since $L^s_n$ is invertible on $X^s_n$, we see that $D\mathcal{H}_n(0,0)$ is invertible if and only if $\langle \mathcal{W}_n[u^*],v^*_n\rangle\neq 0$.  Perform the following expansion:
\[ 
    \langle \mathcal{W}_n[u^*],v^*_n\rangle=\langle \mathcal{W}[u^*],v^*\rangle+\langle \mathcal{W}_n[u^*]-\mathcal{W}[u^*],v^*_n\rangle+\langle \mathcal{W}[u^*],v^*_n-v^*\rangle, 
\]
where we have introduced the shorthand $\mathcal{W}[u^*] = \int_0^1 W(x,y) \sin(u^*(y) - u^*(x))\mathrm{d}y$ for simplicity. Note that $\mathcal{W}[u^*]=-\Kcrit^{-1}(\Omega(x) - \overline\Omega)$, while the non-degeneracy condition (\ref{eq:graphonnondegenerate}) in Hypothesis~\ref{hyp:SNgraphon} guarantees that the denominator, $\langle \Omega(x)-\overline\Omega,v^*\rangle$, is nonzero. Consequently, $\langle \mathcal{W}[u^*],v^*\rangle\neq 0$. Recalling that $0\leq W(x,y)\leq 1$, combining Lemma~\ref{lem:specDFn} and H\"older's inequality gives that for any $\varepsilon>0$ we have the bound $|\langle \mathcal{W}[u^*],v^*_n-v^*\rangle|\leq \varepsilon$ for all $n$ sufficiently large. Next, we have that 
\begin{equation} 
    \begin{split}
    \langle \mathcal{W}_n[u^*]-\mathcal{W}[u^*],v^*_n\rangle&=\int_0^1\int_0^1 [W_n(x,y)-W(x,y)]\sin(u^*(y)-u^*(x))v^*_n(x)\mathrm{d}y\mathrm{d}x  \\
    &= \int_0^1v^*_n(x)\int_0^1 [W_n(x,y)-W(x,y)]\sin(u^*(y)-u^*(x))\mathrm{d}y\mathrm{d}x 
    \end{split}
\end{equation}
The assumption that $\|d_W - d_{W_n}\|_\infty \to 0$ as $n \to \infty$ then implies that 
\begin{equation} \label{eq:WnWdegreeintegral}
    \lim_{n \to \infty}\sup_{x\in [0,1]} \left|\int_0^1 [W_n(x,y)-W(x,y)]\sin(u^*(y)-u^*(x))\mathrm{d}y\right| = 0, 
\end{equation}
as shown in \cite[Lemma~4.7]{bramburger2024persistence}. These facts combine to imply that, by perhaps taking $n$ larger than the $N$ required for Lemma~\ref{lem:specDFn}, $\langle \mathcal{W}_n[u^*],v^*_n\rangle\neq 0$ and therefore $D\mathcal{H}_n(0,0)$ is invertible for all $n$ sufficiently large.  In addition to invertibility, we require that the operator norm of $D\mathcal{H}_n(0,0)^{-1}$ is uniformly bounded in $n$. We will verify that $(L^s_n)^{-1}$ is uniformly bounded in $n$ which will then imply the same for $D\mathcal{H}_n(0,0)^{-1}$. First, note that since the spectrum of $L^s_n$ is contained in the ball  $|z+\zeta|<r$; see Lemma~\ref{lem:DFprop}, then $L^s_n$ can be inverted by Neumann series as
\[ (L^s_n)^{-1}w = \sum_{k=0}^\infty \left(\frac{L^s_n+\zeta}{\zeta}\right)^k\left( \frac{-w}{\zeta}\right). \]
We then have that 
\[ (L^s_n)^{-1}=\left[ I -(L^s-L^s_n) (L^s)^{-1} \right]^{-1} (L^s)^{-1}. \]
Since $\|L^s-L^s_n\|_{2\to 2} \to 0 $ as $n\to \infty$ then we then obtain that $\|(L^s_n)^{-1}\|_{2\to 2}$ is uniformly bounded in $n$.  Suppose for the sake of contradiction that $(L^s_n)^{-1}$ is not uniformly bounded in $n$ as an operator on $X^s_n$.  This would imply that there exists a sequence $w_n\in X^s_n$ for which $\|w_n\|_\infty=1$ but for which $\|(L^s_n)^{-1}w_n\|_\infty\to \infty$ as $n\to\infty$. Let $v_n=(L^s_n)^{-1}w_n$.  Then $\|w_n\|_2\leq \|w_n\|_\infty$ and $\|v_n\|_2$ is uniformly bounded in $n$.    Then $L^s_nv_n=w_n$ assumes the form  
\[ w_n= P^s_n\left[ \Kcrit\int_0^1 W_n(x,y)\cos\left(u^*(y)-u^*(x)\right)(v_n(y)-v_n(x))\mathrm{d}y\right]. \]
We then re-arrange to solve implicitly
\begin{eqnarray}
        v_n&= \frac{w_n}{Q_n(x)}+\frac{\Kcrit}{Q_n(x)} \int_0^1 W_n(x,y)\cos\left(u^*(y)-u^*(x)\right)v_n(y)\mathrm{d}y+\frac{1}{Q_n(x)}\int_0^1 Q_n(y)v_n(y)v_n^*(y) \mathrm{d}y  \nonumber \\
        &- \frac{\Kcrit}{Q_n(x)} \int_0^1\int_0^1 W_n(x,y)\cos\left(u^*(y)-u^*(x)\right)v_n(y)v_n^*(x)\mathrm{d}y\mathrm{d}x,
\label{eq:vnbd}    \end{eqnarray}
where  we recall that $Q_n(x)$ is the  multiplication part of the operator $L_n$ which is uniformly bounded away from zero for $n$ sufficiently large; see Lemma~\ref{lem:DFprop}.  H\"older's inequality applied to the three integrals in (\ref{eq:vnbd}) then implies that uniform boundedness of $v_n$ in $L^2([0,1])$ implies uniform boundedness in $X^s_n$.

We now consider $\mathcal{H}_n(0,0)$, given by 
\[ 
    \mathcal{H}_n(0,0)=\left(\begin{array}{c} \langle \Omega_n(x) - \omega^*_n+\Kcrit \mathcal{W}_n[u^*],v^*_n \rangle \\
    P^s_n\left(\Omega_n(x) - \omega^*_n +\Kcrit \mathcal{W}_n[u^*]\right) \end{array}\right). 
\]
Note that 
\[ 
    \begin{split}
    \   \Omega_n(x) - \omega_n^* +\Kcrit \mathcal{W}_n[u^*] &= \Omega_n(x) - \omega_n^*+\Kcrit (\mathcal{W}_n[u^*]-\mathcal{W}[u^*]) +\Kcrit \mathcal{W}[u^*] \\
    & =\Omega_n(x)-\Omega(x) + \overline\Omega - \omega_n^* +\Kcrit (\mathcal{W}_n[u^*]-\mathcal{W}[u^*]).  
    \end{split}
\]
By assumption we have that $\|\Omega - \Omega_n\|_\infty \to 0$ as $n \to \infty$, which further gives that $|\overline\Omega - \omega_n^*| \to 0$ as $n\to\infty$. Combining these facts with the estimate \eqref{eq:WnWdegreeintegral} gives  
\[ 
    \lim_{n \to \infty}\|\Omega_n(x)-\omega^*_n+\Kcrit \mathcal{W}_n[u^*]\|_\infty = 0, 
\]
which then implies that $\|\mathcal{H}_n(0,0)\|_\infty\to 0$ as $n\to\infty$.

Next, consider the operator
    \[ 
        \mathcal{T}_n(\hat{K},\phi)=-\left(D\mathcal{H}_n(0,0)\right)^{-1} \mathcal{H}_n(0,0)-\left(D\mathcal{H}_n(0,0)\right)^{-1}\mathcal{N}_n(\hat{K},\phi).
    \]
    We will show that this operator is a contraction on the Banach space $Z_n=\mathbb{R}\oplus X^s_n$, where a $z=(z_1,z_2)\in Z_n$ with $z_1\in\mathbb{R}$ and $z_2\in X^s_n$ and $Z_n$ is endowed with the norm $\|(z_1,z_2)\|_{Z_n} := \max\{|z_1|,\|z_2\|_\infty\}$.  Define
\[ 
    \mathfrak{h}_n=-\left(D\mathcal{H}_n(0,0)\right)^{-1} \mathcal{H}_n(0,0).
\]
Since $D\mathcal{H}_n(0,0)$ is boundedly invertible on $Z_n$ and $\|\mathcal{H}_n(0,0)\|_\infty \to 0$, we have that $\|\mathfrak{h}_n\|_{Z_n} \to 0 $ as $n\to\infty$ as well. 

The nonlinear terms further satisfy the following estimate 
\begin{equation} 
    \|\mathcal{N}(\hat{K},\phi)\|_\infty \leq C_1 |\hat{K}|\|\phi\|_\infty +C_2 \|\phi \|_\infty^2, \label{eq:Nestincontraction} 
\end{equation}
for some fixed  positive constants $C_1$ and $C_2$. Next, let $(\hat{K},\phi)\in B_\rho(0) = \{z\in Z_n|\ \|z\|_{Z_n} < \rho\}\subset Z_n$ for some $\rho \in (0,1)$ to be selected subsequently. Then, the estimates on the nonlinear terms give the bound 
\begin{equation} \label{eq:Tmapsballtoball}
    \begin{split}
        \| \mathcal{T}_n(\hat{K},\phi)\|_\infty &\leq \|\mathfrak{h}_n\|_\infty+\left\|D\mathcal{H}_n(0,0)^{-1}\right\|\left(  C_1 |\hat{K}|\|\phi\|_\infty +C_2 \|\phi\|_\infty^2\right) \\
        &\leq \|\mathfrak{h}_n\|_{Z_n}+\tilde{C}_1 \rho^2,
    \end{split} 
\end{equation}
for some $\tilde{C}_1>0$ independent of $\rho$ so long as $\rho < 1$.  

Now, let $z_a$ and $z_b$ be any two elements of $B_\rho(0)$.  A similar chain of reasoning to what was carried out above leads to the estimate
\begin{equation}\label{eq:Tcontraction}
    \|\mathcal{T}_n(z_a)-\mathcal{T}_n(z_b)\|_{Z_n}\leq \tilde{C}_2 \rho \|z_a-z_b\|_{Z_n},
\end{equation}
for some fixed constant $\tilde{C}_2>0$ independent of $\rho$ so long as $\rho < 1$. Taking $\rho= \min\{\frac{1}{2\max\{\tilde{C}_1,\tilde{C}_2\}},\frac{1}{2}\}$. Then since $\|\mathfrak{h}_n\|_\infty\to 0$, we find that for all $n$ taken sufficiently large, the above estimates guarantee that $\|\mathfrak{h}_n\|_\infty< \frac{\rho}{2}$ and we have that $\mathcal{T}_n:B_\rho(0)\to B_\rho(0)$ is a contraction on $Z_n$ with contraction constant at most $\frac{1}{2}$.  Therefore, for all $n$ taken sufficiently large there exists a unique fixed point $(\tilde{K}^*_n,\phi^*_n)\in B_\rho(0)$.

We conclude by establishing the convergence facts stated in \eqref{eq:pertconvinn}.  A priori, the contraction mapping theorem only states that these fixed points lie in $B_\rho(0)$.  However, if we define $\mathfrak{s}_n=\mathcal{T}_n\mathfrak{h}_n$ we can combine the fact that $\|\mathfrak{h}_n\|_{Z_n}\to 0$ with the bounds on the nonlinear term in \eqref{eq:Nestincontraction} to find that $\|\mathfrak{s}_n\|_{Z_n}\to 0$ as well. Then a corollary of the contraction mapping theorem implies that the fixed point $(\tilde{K}^*_n,\phi^*_n)$ satisfies $\|(\tilde{K}^*_n,\phi^*_n)-\mathfrak{s}_n\|_{Z_n} \leq \frac{\kappa}{1-\kappa} \|\mathfrak{h}_n-\mathfrak{s}_n\|_{Z_n}$ where $\kappa$ is the contraction constant associated to the contraction mapping.  We therefore obtain 
\[ 
    \| (\tilde{K}^*_n,\phi^*_n)-\mathfrak{s}_n\|_{Z_n}\leq \frac{\frac{1}{2}}{1-\frac{1}{2}} \|\mathfrak{h}_n-\mathfrak{s}_n\|_{Z_n}
\]
which in turn can be rearranged to find that
\[
    \| (\tilde{K}^*_n,\phi^*_n)\|_{Z_n} \leq \|\mathfrak{h}_n\|_{Z_n} + 2\|\mathfrak{s}_n\|_{Z_n}.    
\]
Since we have already established that $\|\mathfrak{h}_n\|_{Z_n},\|\mathfrak{s}_n\|_{Z_n} \to 0$ as $n \to \infty$, we arrive at the results \eqref{eq:pertconvinn}. This completes the proof of the lemma.
\end{proof} % end of proof

We now consider the following transformations
\begin{equation} \label{eq:transformation}
    \begin{split}
        v_n&= \phi^s_n+\tilde{v}_n \\
        K &= \Kcrit +K^*_n+\tilde{K}.  
    \end{split}
\end{equation}
Define $H_n:X_n\times \mathbb{R}\to X_n$ so that $\tilde{v}_n$ solves \eqref{eq:Heqn} with
\[ 
    H_n(\tilde{v}_n,\tilde{K})=\Omega_n - \omega_n^* +(\Kcrit +K^*_n+\tilde{K})\mathcal{W}_n\left[u^*+\phi^s_n+\tilde{v}_n \right],  
\]
where we continue with the notation $\mathcal{W}_n[u] = \int_0^1 W_n(x,y)\sin(u(y) - u(x))\mathrm{d}y$ introduced in the previous proof. According to Lemma~\ref{lem:maptozero} it follows that $H_n(0,0)=0$.  Moreover, 
\begin{equation}
    \begin{split}
        D_{\tilde{v}_n}H_n(0,0)&= (\Kcrit+K^*_n)D\mathcal{W}_n[u^*+\phi^s_n] \\
        D_{\tilde{K}}H_n(0,0)&=\mathcal{W}_n[u^*+\phi^s_n].
    \end{split}
\end{equation}

\begin{lem} \label{lem:tildelambda} 
Let $\tilde{L}_n=D_{\tilde{v}_n}H_n(0,0)$. For all $\varepsilon > 0$ there exists an $\tilde{N} \geq 1$ such that for all $n \geq N$ the linear operator $\tilde{L}_n:X_n\to X_n$ has a simple eigenvalue $\tilde{\lambda_n}$ with $|\tilde{\lambda}_n| < \varepsilon$ and associated eigenfunction $\tilde{v}^*_n(x) \in X_n$, normalized so that $\langle \tilde{v}^*_n,\tilde{v}^*_n\rangle=1 $, satisfying 
\begin{equation}\label{eq:vntildevnstarestimate} 
    \|v_n^*-\tilde{v}_n^*\|_\infty < \varepsilon,  
\end{equation}
Furthermore, the remainder of the spectrum lies in the ball 
\[
    \{z\in\mathbb{C} \ | \ |z+\zeta|<r\}
\] 
and the spectral projection onto the eigenspace of the isolated eigenvalue $\tilde{\lambda}_n$ is 
\[ 
    \tilde{P}^c_n f=\tilde{v}^*_n(x)\int_0^1 f(y) \tilde{v}_n^*(y)\mathrm{d}y=\tilde{v}^*_n(x)\langle f, \tilde{v}^*_n\rangle. 
\]
A stable projection is defined via $\tilde{P}^s_n=I-\tilde{P}^c_n$.
\end{lem}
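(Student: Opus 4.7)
The plan is to mimic the proof of Lemma~\ref{lem:specDFn}, now viewing $\tilde{L}_n$ as a small perturbation of $L_n$ (whose spectrum we already control) rather than of $L$. Writing out the two operators side by side,
\begin{equation*}
\begin{split}
L_n v &= \Kcrit \int_0^1 W_n(x,y)\cos(u^*(y)-u^*(x))(v(y)-v(x))\mathrm{d}y, \\
\tilde L_n v &= (\Kcrit+K_n^*) \int_0^1 W_n(x,y)\cos\bigl(u^*(y)+\phi_n^s(y)-u^*(x)-\phi_n^s(x)\bigr)(v(y)-v(x))\mathrm{d}y,
\end{split}
\end{equation*}
and using Lemma~\ref{lem:maptozero} together with $|\cos(a+b)-\cos a|\le |b|$, the first step is the uniform bound
\[
\|\tilde L_n - L_n\|_{2\to 2} \le C\bigl(|K_n^*| + \|\phi_n^s\|_\infty\bigr) \longrightarrow 0
\]
as $n \to \infty$. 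Combined with the conclusions of Lemma~\ref{lem:specDFn}, this implies $\|\tilde L_n - L\|_{2\to 2} \to 0$ as well, so the reasoning in Lemma~\ref{lem:specDFn} based on \cite[Theorems IV.3.1 and IV.3.16]{kato} applies verbatim: the constants $\zeta,r,\varepsilon_0$ can be kept the same, and for any $\varepsilon \in (0,\varepsilon_0)$ there is $\tilde N\ge 1$ such that for all $n\ge \tilde N$ the spectrum of $\tilde L_n$ on $X_n$ lies in $\{|z+\zeta|<r\}\cup B_\varepsilon(0)$, with a unique simple eigenvalue $\tilde\lambda_n\in B_\varepsilon(0)$. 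The Riesz projection formula then gives the stated form of $\tilde P^c_n$.

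The main technical step, as in Lemma~\ref{lem:specDFn}, is upgrading $L^2$ convergence of the eigenfunction to $L^\infty$ convergence in order to obtain \eqref{eq:vntildevnstarestimate}. The plan is to decompose $\tilde L_n = \tilde Q_n(x) + \tilde T_n$ into its multiplication and integral parts exactly as in \eqref{eq:LandLndecomp}, note that $\|\tilde Q_n - Q_n\|_\infty \to 0$ from the bound above (since $\tilde Q_n$ differs from $Q_n$ only by the shift $\phi_n^s$ in the cosine and the scaling $K_n^*$), and hence that $\tilde Q_n$ is uniformly bounded away from zero for $n$ large. Writing the eigenvalue equation $\tilde L_n \tilde v_n^* = \tilde\lambda_n \tilde v_n^*$ in the form $\tilde v_n^* = (\tilde\lambda_n - \tilde Q_n)^{-1}\tilde T_n \tilde v_n^*$ and using H\"older's inequality as in the last step of Lemma~\ref{lem:specDFn} shows that $\|\tilde v_n^*\|_\infty$ is uniformly bounded in $n$. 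Then subtracting the two eigenvalue equations $L_n v_n^* = \lambda_n v_n^*$ and $\tilde L_n \tilde v_n^* = \tilde\lambda_n \tilde v_n^*$, isolating the multiplication term $Q_n(x)(v_n^* - \tilde v_n^*)$, and bounding each of the remaining pieces by $|\lambda_n|+|\tilde\lambda_n|$, $\|\tilde Q_n - Q_n\|_\infty$, $\|\tilde L_n - L_n\|_{2\to 2}$, or $\|v_n^* - \tilde v_n^*\|_2$ in $L^\infty$ yields the desired estimate \eqref{eq:vntildevnstarestimate}.

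The hardest part is this $L^\infty$ upgrade of the eigenfunction convergence. The subtlety is that $\tilde T_n$ is an integral operator that in principle only controls the $L^2$ norm of its input; it is only after one shows that $\tilde Q_n$ is invertible with uniformly bounded inverse (which relies on the essential-spectrum analysis and the assumed separation from $0$ via the stable ball $\{|z+\zeta|<r\}$) that one can solve for $\tilde v_n^* - v_n^*$ pointwise and convert $L^2$ smallness into $L^\infty$ smallness. Once that bound is in place the remaining claims (simplicity of the eigenvalue, the spectral projection formula, and confinement of the rest of the spectrum to $\{|z+\zeta|<r\}$) are direct consequences of analytic perturbation theory as cited in Lemma~\ref{lem:specDFn}.
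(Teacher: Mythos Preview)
Your proposal is correct but takes a longer route than the paper. The key observation you are missing is that, unlike the situation in Lemma~\ref{lem:specDFn} where only $\|L_n-L\|_{2\to 2}\to 0$ is available (because $W_n-W$ is small merely in cut norm), here the perturbation $\tilde L_n-L_n$ is small in the \emph{stronger} $\|\cdot\|_{\infty\to\infty}$ operator norm. Indeed, the very estimate you write,
\[
\|\tilde L_n - L_n\| \le C\bigl(|K_n^*| + \|\phi_n^s\|_\infty\bigr),
\]
holds just as well for the $\infty\to\infty$ norm, since the difference involves only the scalar $K_n^*$ and the shift $\phi_n^s$ inside the cosine, both of which are small uniformly by Lemma~\ref{lem:maptozero}. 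The paper simply writes
\[
\tilde{L}_n=L_n +K_n^* D\mathcal{W}_n[u^*+\phi^s_n] +\Kcrit \left(D\mathcal{W}_n[u^*+\phi^s_n]-D\mathcal{W}_n[u^*]\right),
\]
notes $\|\tilde L_n-L_n\|_{\infty\to\infty}\to 0$, and then invokes Kato's perturbation theory directly on the Banach space $X_n$. This immediately yields all conclusions, including the $L^\infty$ eigenfunction estimate \eqref{eq:vntildevnstarestimate}, with no need for the $L^2$-to-$L^\infty$ upgrade machinery you carry over from Lemma~\ref{lem:specDFn}. Your argument would work, but the multiplication/integral decomposition and the pointwise solving step are unnecessary here.
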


\begin{proof}
Write 
\[ 
    \tilde{L}_n=L_n +K_n^* D\mathcal{W}_n[u^*+\phi^s_n] +\Kcrit \left(D\mathcal{W}_n[u^*+\phi^s_n]-D\mathcal{W}_n[u^*]\right).  
\]
By \eqref{eq:pertconvinn} it follows that 
\[ 
    \|\tilde{L}_n-L_n\|_{\infty\to\infty} \to 0, 
\]
as $n\to\infty$. The result then follows from spectral convergence results as in \cite[Theorem IV.3.1 and Theorem IV.3.16]{kato}.
\end{proof}

We now decompose the solution into 
\[
    \tilde{v}_n(t,x)=w^c_n(t) \tilde{v}^*_n(x)+\tilde{v}^s_n(t,x),
\]
where $w^c_n\in\mathbb{R}$ and $\tilde{v}^s_n\in \tilde{X}^s_n$.

\begin{prop}\label{prop:stepgraphonCM}  
There exists an $N\geq 1$ and an $\varepsilon>0$ such that for any  $n\geq N$ there exists 
\begin{enumerate}
    \item open neighborhoods $B^c_\varepsilon(0)\subset \tilde{X}^c_n$ and   $B^s_\varepsilon(0)\subset \tilde{X}^s_n$,  
    \item an open interval $I_K=(-\varepsilon,\varepsilon)$, and 
    \item for any $k>2$, a $C^k$ mapping $\Psi_n:\mathbb{R}\times \mathbb{R}\times \mathbb{R} \to \tilde{X}^s_n$, 
\end{enumerate}
such that the manifold
\begin{equation}\label{MnCenterManifold}
    \mathcal{M}_n=\left\{ w^c_n \tilde{v}^*_n+\Psi_n(w^c_n,\tilde{K},\tilde{\lambda}_n)\  | \  w^c_n\in \mathbb{R} \right\},
\end{equation}
is a locally invariant center manifold.  For any fixed $|\tilde{K}|<\varepsilon$, $\mathcal{M}_n$ contains all solutions in $B^c_\varepsilon(0)\times B^s_\varepsilon(0)$ which remain bounded for all time. 

Moreover, we obtain the following facts regarding the reduced flow on the center manifold and the description of the manifold itself:
\begin{itemize}
\item[i)] The center manifold admits the following expansion 
\begin{equation}
\Psi_n(w^c_n,\tilde{K},\tilde{\lambda}_n)=\Psi_{n,010}\tilde{K}+\mathcal{O}\left(\left(w^c_n+\tilde{K}+\tilde{\lambda}_n\right)^2\right) ,
\end{equation}
for some $\Psi_{n,010}\in \tilde{X}^s_n$.
%and 
%\begin{equation}
%    \frac{\partial^2 \Psi_n}{\partial \tilde{\lambda}_n^2}(0,0,0)=0,
%\end{equation}
%implying that there are the term proportional to $(\tilde{\lambda}_n)^2$ in the expansion of the center manifold is zero.  
\item[ii)] The reduced equation on the center manifold assumes the form
\begin{equation} \label{eq:reducedCM}
    \frac{dw^c_n}{dt}=\tilde{\lambda}_nw^c_n+a_n\tilde{K} +b_n (w^c_n)^2+\mathcal{O}(w^c_n\tilde{K},w^c_n\tilde{\lambda}_n,\tilde{K}^2,|w^c_n+\tilde{K}+\tilde{\lambda}_n|^3)
\end{equation}
where 
\begin{equation}\label{eq:anbns}
    \begin{split}
        a_n &= \int_0^1 \int_0^1 W_n(x,y)\sin\left(u^*(y)+\phi^s_n(y)-u^*(x)+\phi^s_n(x)\right)\tilde{v}^*_n(x)\mathrm{d}y\mathrm{d}x\\
        b_n &=-\frac{\Kcrit+K^*_n}{2} \int_0^1\int_0^1 W_n(x,y)\sin\left(u^*(y)+\phi^s_n(y)-u^*(x)+\phi^s_n(x)\right)\left(\tilde{v}^*_n(y)-\tilde{v}^*_n(x)\right)^2\tilde{v}^*_n(x)\mathrm{d}y\mathrm{d}x.
    \end{split}
\end{equation}
  Furthermore, $a_n\to a$ and $b_n\to b$ as $n\to\infty$.
\end{itemize}
\end{prop}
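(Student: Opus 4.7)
The plan is to apply the parameter-dependent center manifold theorem \cite[Theorem~3.3]{haragus11} to the transformed system \ref{eq:Heqn}, exactly as in the proof of Lemma~\ref{lem:CMgraphon}, and then extract the reduced dynamics by a Taylor expansion about the origin. For each $n$ large enough so that Lemma~\ref{lem:tildelambda} applies, the linearization $\tilde{L}_n = D_{\tilde{v}_n} H_n(0,0)$ decomposes $X_n = \tilde{X}_n^c \oplus \tilde{X}_n^s$ with $\tilde{X}_n^c = \mathrm{span}\{\tilde{v}_n^*\}$ carrying the near-zero eigenvalue $\tilde{\lambda}_n$, while the restriction of $\tilde{L}_n$ to $\tilde{X}_n^s$ has spectrum contained in $\{z\in\mathbb{C}:|z+\zeta|<r\}$ and therefore generates a uniformly exponentially decaying semigroup. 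The nonlinearity $H_n$ is obtained by composing the analytic sine function with a bounded linear operator, hence is $C^k$ for any $k$, and introducing a cut-off to localize the nonlinearity (as in \cite[Appendix~B]{haragus11}) makes its Lipschitz constant small. The hypotheses of the center manifold theorem are then satisfied, yielding the local invariant manifold $\mathcal{M}_n$ in \ref{MnCenterManifold}, containing all bounded orbits of \ref{eq:Heqn} in a neighborhood of the origin.

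Having established existence, I would derive the reduced equation by writing $\tilde{v}_n = w_n^c \tilde{v}_n^* + \Psi_n$ and pairing \ref{eq:Heqn} with $\tilde{v}_n^*$ to get $\frac{dw_n^c}{dt} = \langle H_n(w_n^c \tilde{v}_n^* + \Psi_n, \tilde{K}), \tilde{v}_n^* \rangle$. Taylor-expanding the right-hand side about $(w_n^c, \tilde{K}) = (0,0)$ and using $H_n(0,0) = 0$ from Lemma~\ref{lem:maptozero} together with $\tilde{L}_n \tilde{v}_n^* = \tilde{\lambda}_n \tilde{v}_n^*$, the linear terms produce $\tilde{\lambda}_n w_n^c + a_n \tilde{K}$, with $a_n = \langle \mathcal{W}_n[u^* + \phi_n^s], \tilde{v}_n^* \rangle$ coming from $D_{\tilde{K}} H_n(0,0)$. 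The $(w_n^c)^2$ coefficient is $\frac{1}{2}\langle D^2_{\tilde{v}} H_n(0,0)(\tilde{v}_n^*, \tilde{v}_n^*), \tilde{v}_n^* \rangle$, which unpacks directly to the expression for $b_n$ in \ref{eq:anbns}. Since the center manifold is tangent to $\tilde{X}_n^c$ at the origin, $\Psi_n$ has neither constant nor pure-$w_n^c$ linear terms, so all contributions to the quadratic coefficient through $\Psi_n$ are absorbed in the remainder. The coefficient $\Psi_{n,010}$ is determined from the invariance equation by matching the $\tilde{K}$-linear terms, satisfying the linear problem $\tilde{L}_n\bigl|_{\tilde{X}_n^s} \Psi_{n,010} = -\tilde{P}_n^s \mathcal{W}_n[u^* + \phi_n^s]$, which is uniquely solvable since $\tilde{L}_n\bigl|_{\tilde{X}_n^s}$ is boundedly invertible.

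What remains is the convergence $a_n \to a$ and $b_n \to b$. Both quantities are double integrals of $W_n$ against expressions involving $u^* + \phi_n^s$ and $\tilde{v}_n^*$, while $a$ and $b$ are the analogous integrals with $W,u^*$, and $v^*$. I would split each difference into three pieces: (i) the integral of $(W_n - W)$ against the limiting integrand; (ii) the error from replacing $u^*$ by $u^* + \phi_n^s$ inside the sine; and (iii) the error from replacing $v^*$ by $\tilde{v}_n^*$. Pieces (ii) and (iii) vanish in the limit because $\|\phi_n^s\|_\infty \to 0$ from Lemma~\ref{lem:maptozero}, $\|\tilde{v}_n^* - v^*\|_\infty \to 0$ by combining Lemmas~\ref{lem:specDFn} and~\ref{lem:tildelambda} with the triangle inequality, and sine is locally Lipschitz. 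Piece (i) is the main obstacle, since the integrand is not a tensor product; I would handle it by using the angle-subtraction identity to rewrite $\sin(u^*(y)-u^*(x))$ as a finite sum of products $g(y)h(x)$, converting each piece into an application of the integral operator $T_{W_n - W}$, and then invoking $\|W_n - W\|_\square \to 0$ together with the degree convergence exactly as in \cite[Lemma~4.7]{bramburger2024persistence}. The squared-difference factor $(\tilde{v}_n^*(y)-\tilde{v}_n^*(x))^2$ entering $b_n$ is handled identically once $\tilde{v}_n^*$ is replaced by $v^*$ via piece (iii), using the uniform $L^\infty$ bound on $\tilde{v}_n^*$ already produced in the proof of Lemma~\ref{lem:specDFn}.
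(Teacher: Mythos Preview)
Your overall strategy---apply \cite[Theorem~3.3]{haragus11}, project onto $\tilde{v}_n^*$, Taylor-expand, and then prove $a_n\to a$, $b_n\to b$ by splitting into cut-norm and $L^\infty$ pieces---is the same as the paper's, and your treatment of the convergence $a_n\to a$, $b_n\to b$ is in fact more detailed than what the paper writes out. However, there are two points where your argument is incomplete relative to the paper.

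\textbf{The near-zero eigenvalue.} You apply the center manifold theorem directly to $\tilde{L}_n$, treating $\tilde{\lambda}_n$ as the center eigenvalue. But \cite[Theorem~3.3]{haragus11} requires the center spectrum to lie exactly on the imaginary axis, and $\tilde{\lambda}_n$ is only small, not zero. The paper deals with this by replacing $\tilde{L}_n$ with the shifted operator $M_n=\tilde{L}_n-\tilde{\lambda}_n\tilde{P}_n^c$, which has a genuine zero eigenvalue, and then reintroducing the shift as an additional nonlinear term $\nu\,\tilde{P}_n^c\tilde{v}_n$ with $\nu$ treated as a third parameter. The center manifold theorem is applied to this three-parameter system, and only at the end is $\nu$ set equal to $\tilde{\lambda}_n$. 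This is why $\Psi_n$ in the statement depends on $\tilde{\lambda}_n$ as a separate argument and why the reduced equation \ref{eq:reducedCM} carries remainder terms in $\tilde{\lambda}_n$; your expansion in only $(w_n^c,\tilde{K})$ does not produce the dependence on $\tilde{\lambda}_n$ that the proposition asserts.

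\textbf{Uniformity of $\varepsilon$ in $n$.} The proposition claims a single $\varepsilon>0$ that works for all $n\geq N$. You assert a ``uniformly exponentially decaying semigroup'' but do not justify it, and you do not address the Lipschitz constants of the cut-off nonlinearity. The paper devotes a separate lemma to showing that the resolvent bound for $\tilde{L}_n^s$ is uniform in $n$ (bootstrapping from the $L^2$ resolvent via operator-norm convergence to an $L^\infty$ bound, as in Lemma~\ref{lem:specDFn}), and another lemma to bounding the quantities $\delta_{0,n}(\varepsilon)$ and $\delta_{1,n}(\varepsilon)$ governing the contraction in the center manifold construction by $n$-independent constants. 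Without these, the neighborhood of validity could in principle shrink to zero as $n\to\infty$, which would defeat the purpose of the reduction.
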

\begin{proof} The function $H_n$ is a $C^k$ mapping for any $k>0$ so the existence of a $C^k$ center manifold follows  \cite[Theorem 3.3]{haragus11}.  The proof is presented in Appendix~\ref{sec:stepgraphonproof}. 
\end{proof}

\begin{cor}
    For all sufficiently large $n$, the discrete equation \eqref{eq:mainstepgraphon} has a saddle-node bifurcation at coupling parameter $\Kncrit$ satisfying
    \[ 
        \lim_{n \to \infty }|\Kncrit-\Kcrit| = 0.
    \]
\end{cor}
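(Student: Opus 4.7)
The plan is to exploit the reduced scalar dynamics on the center manifold $\mathcal{M}_n$ from Proposition~\ref{prop:stepgraphonCM} and perform a direct analysis of the resulting one-dimensional bifurcation problem, using the convergences $a_n \to a$, $b_n \to b$, $\tilde{\lambda}_n \to 0$, and $K^*_n \to 0$ to transfer the graphon saddle-node to the step case. Steady states of \ref{eq:mainstepgraphon} in a neighborhood of the graphon solution correspond (via the transformation \ref{eq:transformation} and the local invariance of $\mathcal{M}_n$) to zeros of the right-hand side of \ref{eq:reducedCM}, that is, to solutions $(w^c_n,\tilde{K})$ of
\begin{equation*}
    g_n(w^c_n,\tilde{K}) := \tilde{\lambda}_n w^c_n + a_n \tilde{K} + b_n (w^c_n)^2 + R_n(w^c_n,\tilde{K}) = 0,
\end{equation*}
where $R_n$ collects all terms that are $\mathcal{O}(w^c_n\tilde{K},\tilde{K}^2,|w^c_n+\tilde{K}|^3)$ uniformly in $n$ for $n$ large.

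First, I would identify the candidate fold point. Setting $\partial_{w^c_n} g_n = 0$ gives $w^c_n = -\tilde{\lambda}_n/(2b_n) + \mathcal{O}(\tilde{K},\tilde{\lambda}_n^2)$, and substituting this into $g_n=0$ gives, after expansion,
\begin{equation*}
    \tilde{K}_n^{\mathrm{SN}} \;=\; \frac{\tilde{\lambda}_n^2}{4 a_n b_n} + o(\tilde{\lambda}_n^2).
\end{equation*}
Since $a_n b_n \to ab < 0$ by Lemma~\ref{lem:CMgraphon} and Proposition~\ref{prop:stepgraphonCM}, this expression is well defined for all large $n$, and $\tilde{K}_n^{\mathrm{SN}} \to 0$ because $\tilde{\lambda}_n \to 0$. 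To make this rigorous I would apply the implicit function theorem to the pair of equations $g_n = 0$, $\partial_{w^c_n} g_n = 0$, viewing it as a smooth perturbation of the unperturbed problem $ab<0$ obtained in Lemma~\ref{lem:CMgraphon} at $(w^c,\tilde{K})=(0,0)$; the nondegeneracy of $b \neq 0$ guarantees a unique fold branch that depends smoothly on the small parameters $(a_n-a, b_n-b, \tilde{\lambda}_n, K^*_n)$ and collapses to the graphon fold as $n \to \infty$.

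Next, I would verify that $(w^c_n,\tilde{K}_n^{\mathrm{SN}})$ is a genuine saddle-node for the reduced scalar equation: the single vanishing derivative of $g_n$ at this point, combined with $\partial_{\tilde{K}} g_n(0,0) = a_n \to a \neq 0$ and $\partial_{w^c_n}^2 g_n(0,0) = 2 b_n \to 2b \neq 0$, yields the usual fold normal form, producing two smooth branches of equilibria for $\tilde{K} > \tilde{K}_n^{\mathrm{SN}}$ (on the appropriate side, dictated by $\mathrm{sign}(a_n b_n) = \mathrm{sign}(ab) < 0$) and no equilibria on the other side within a fixed small neighborhood. Because the center manifold $\mathcal{M}_n$ contains all small bounded solutions of \ref{eq:mainstepgraphon} near the equilibrium (Proposition~\ref{prop:stepgraphonCM}), these reduced equilibria are in one-to-one correspondence with steady states of \ref{eq:mainstepgraphon} near $u^*$, and thus with steady states of the discrete system \ref{GFunction} by the piecewise constant property established in \S\ref{sec:PiecewiseConstant}.

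Finally, unwinding the reparametrization $K = \Kcrit + K^*_n + \tilde{K}$ from \ref{eq:transformation}, the discrete critical coupling is
\begin{equation*}
    \Kncrit \;=\; \Kcrit + K^*_n + \tilde{K}_n^{\mathrm{SN}}.
\end{equation*}
Both $|K^*_n| \to 0$ by Lemma~\ref{lem:maptozero} and $|\tilde{K}_n^{\mathrm{SN}}| \to 0$ by the estimate above, giving $|\Kncrit - \Kcrit| \to 0$. The main technical obstacle I anticipate is controlling the shift $-\tilde{\lambda}_n/(2b_n)$ of the fold in the $w^c_n$ direction: even though $\tilde{\lambda}_n \to 0$, one must ensure this shift stays inside the neighborhood on which the center manifold reduction and the contraction arguments of Lemma~\ref{lem:maptozero} are valid, so the uniform-in-$n$ bounds from Proposition~\ref{prop:stepgraphonCM} and the uniform boundedness of $(L^s_n)^{-1}$ must be invoked to keep all perturbative estimates on a common neighborhood independent of $n$.
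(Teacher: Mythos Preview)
Your proposal is correct and follows essentially the same route as the paper: both analyze the reduced scalar equation \ref{eq:reducedCM} on the center manifold, locate the fold by balancing the linear $\tilde{\lambda}_n w^c_n$ term against the quadratic $b_n (w^c_n)^2$, obtain $\tilde{K}_n^{\mathrm{SN}} = \tilde{\lambda}_n^2/(4a_nb_n) + \text{h.o.t.}$, and then unwind \ref{eq:transformation} to get $\Kncrit = \Kcrit + K^*_n + \tilde{K}_n^{\mathrm{SN}}$. The only cosmetic difference is that the paper packages the perturbation via a blow-up rescaling $(\tilde{\lambda}_n,w^c_n,\tilde{K}) = (\eta\mu,\eta z,\eta^2\kappa)$ and applies the implicit function theorem in $\eta$, whereas you solve the fold conditions $g_n = \partial_{w^c_n} g_n = 0$ directly; both are standard and equivalent ways to isolate the saddle-node.
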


\begin{proof} 
We examine the reduced flow within the center manifold given in \eqref{eq:reducedCM}. Recall from Lemma~\ref{lem:tildelambda} that for $n \gg 1$ the eigenvalue $\tilde{\lambda}_n$ is small. Thus, we take $n$ sufficiently large to apply the results of our previous findings and then introduce the rescalings
\[ 
    \lambda_n = \eta\mu, \qquad w^c_n=\eta z, \qquad \tilde{K}=\eta^2\kappa, 
\]
where $|\eta|$ is a small quantity and we neglect the dependence of $(\mu,z,\kappa)$ on $n$ to simplify the presentation. With this rescaling, equilibrium solutions on the center manifold \eqref{eq:reducedCM} satisfy
\[ 
    0=\eta^2\left( \mu z + a_n\kappa + b_nz^2\right) +\mathcal{O}(\eta^3) .
\]
Upon dividing through by $\eta^2$, solving the leading order quadratic equation $\mu z+a_n\kappa +b_nz^2 = 0$ gives the existence of two branches of equilibria which coalesce at 
\[ 
    z^*=-\frac{\mu}{b_n}, \qquad \kappa =\frac{\mu^2}{4a_nb_n}.
\]
The implicit function theorem allows one to smoothly perturb this critical point in $\eta$ about $\eta = 0$, and so reverting to the original coordinates we obtain the existence a saddle-node bifurcation in \eqref{eq:reducedCM} occurring at 
\[ 
    \tilde{K}=\frac{\tilde{\lambda}_n^2}{4a_nb_n}+\mathcal{O}(\tilde{\lambda}_n^3). 
\]
Reverting further to \eqref{eq:transformation} we obtain
\begin{equation}\label{KncritExact}
    \Kncrit=\Kcrit+K^*_n+\frac{\tilde{\lambda}_n^2}{4a_nb_n}+\mathcal{O}(\tilde{\lambda}_n^3). 
\end{equation}
Since we have $K_n^*,\tilde{\lambda}_n \to 0$ as $n\to \infty$ via Lemmas~\ref{lem:maptozero} and \ref{lem:tildelambda}, respectively, we have now proven all statements the corollary. 
\end{proof}

The result of the work in this section is that we have demonstrated that under the assumptions of Theorem~\ref{thm:Bifurcation} we have shown that a saddle-node bifurcation also takes place in the step function model \eqref{eq:mainstepgraphon}. Moreover, this bifurcation takes place at $\Kncrit$, as given in \eqref{KncritExact} which asymptotically approaches $\Kcrit$ as $n \to \infty$. Note however that we have not proven Theorem~\ref{thm:Persistence} in its entirety yet though since we have not returned to the finite-dimensional model $G_n$ in \eqref{GFunction}, representing the right-hand-side of the Kuramoto model \eqref{Kuramoto}. This final step is taken care of in the following subsection.

%%%%%%%%%%%%%%%%%%%%%%%%%%%%%%%%%%%%%%%%%%%%%%%%%%%%%%%%%%%%%%%%%%%%%%%%%%%%%%%%%%%%%%%%%%%%%%%%%%
\subsection{Finite-Dimensional Solutions}\label{sec:PiecewiseConstant}

Up to this point, we have only dealt with the step graphon equation \eqref{eq:mainstepgraphon}, whose steady-states correspond to solving $F_n(u_n,K) = 0$ in \eqref{FnFunction}. As was discussed in Section~\ref{sec:Results}, if we have that some pair $(u^*_n,K^*)\in X_n\times \mathbb{R}$ solves $F_n(u^*_n,K^*) = 0$ with $u^*$ piecewise constant over the intervals $\{I_j^n\}_{j = 1}^n$, then $F_n = 0$ completely reduces to the finite-dimensional problem $G_n = 0$ in \eqref{GFunction} at the same $K = K^*$. Thus, here we provide a result showing that under the assumptions of Proposition~\ref{prop:stepgraphonCM} all steady-state solutions on the center manifold $\mathcal{M}_n$, as defined in \eqref{MnCenterManifold}, are piecewise constant over the intervals $\{I_j^n\}_{j = 1}^n$. This in turn will complete the proof of Theorem~\ref{thm:Bifurcation} as it brings us back to the finite-dimensional Kuramoto model.
 
\begin{lem}\label{lem:PiecewiseConstant}
    Let $N \geq 1$ and $\varepsilon > 0$ be as guaranteed by Proposition~\ref{prop:stepgraphonCM}. Then, for every $n\geq N$, every steady-state solution on the center manifold $\mathcal{M}_n$ in \eqref{MnCenterManifold} is piecewise constant over the intervals $\{I_j^n\}_{j = 1}^n$.
\end{lem}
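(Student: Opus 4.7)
The plan is to exploit the piecewise-constant structure of $W_n$ and $\Omega_n$ on the partition $\{I_j^n\}$ to show that the steady-state equation $F_n(u_n,K)=0$, restricted to any interval $I_j^n$, reduces to an implicit algebraic equation in the single value $u_n(x)$ whose solution set is discrete. Continuity of $u_n$ on each $I_j^n$, built into the definition of $X_n$, then forces $u_n$ to be constant there, so $u_n$ is piecewise constant over $\{I_j^n\}$.

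To implement this, fix a steady state $u_n \in \mathcal{M}_n$, fix $j$, and take $x \in I_j^n$. Since $\Omega_n(x) = \omega_j$ and $W_n(x,y) = W_{jk}$ for $y \in I_k^n$ are independent of the particular choice of $x \in I_j^n$, the identity $\sin(u_n(y) - u_n(x)) = \sin(u_n(y))\cos(u_n(x)) - \cos(u_n(y))\sin(u_n(x))$ recasts $F_n(u_n,K)(x)=0$ as
\[
    \omega_j - \omega_n^* + K\bigl[A_j\cos(u_n(x)) - B_j\sin(u_n(x))\bigr] = 0,
\]
where
\[
    A_j = \sum_{k=1}^n W_{jk}\int_{I_k^n}\sin(u_n(y))\,\mathrm{d}y, \qquad B_j = \sum_{k=1}^n W_{jk}\int_{I_k^n}\cos(u_n(y))\,\mathrm{d}y
\]
are constants independent of $x \in I_j^n$. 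When $(A_j,B_j) \neq (0,0)$, the level set $\{\theta\in\mathbb{R}:A_j\cos\theta - B_j\sin\theta = (\omega_n^* - \omega_j)/K\}$ is a discrete subset of $\mathbb{R}$ (isolated zeros of a real-analytic, non-constant function); since the continuous image $u_n(I_j^n)$ is connected, it must be a singleton, and $u_n$ is constant on $I_j^n$.

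The main remaining step, and where I expect the principal technical difficulty, is verifying that $(A_j,B_j)\neq (0,0)$ for every $j$ and all sufficiently large $n$. Proposition~\ref{prop:stepgraphonCM} guarantees $\|u_n - u^*\|_\infty$ is small on $\mathcal{M}_n$, and estimates analogous to those in the proof of Lemma~\ref{lem:maptozero} then yield
\[
    A_j + \mbi B_j = \int_0^1 W_n(x,y)\,\me^{\mbi u^*(y)}\,\mathrm{d}y + o(1) \longrightarrow \int_0^1 W(x_j^*,y)\,\me^{\mbi u^*(y)}\,\mathrm{d}y
\]
uniformly over representatives $x_j^* \in I_j^n$ as $n\to\infty$. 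A simultaneous vanishing of $(A_j, B_j)$ would, through the identity $F(u^*,\Kcrit)(x_j^*)=0$, force both $\int_0^1 W(x_j^*,y)\,\me^{\mbi u^*(y)}\,\mathrm{d}y = 0$ and $\Omega(x_j^*) = \overline\Omega$, together with the exact coincidence $\omega_j = \omega_n^*$. This non-generic triple coincidence can either be excluded by a mild additional assumption on the triple $(W,\Omega,u^*)$ or verified directly in the concrete examples of Section~\ref{sec:ApplicationER}. Once piecewise-constancy of $u_n$ is established, the vector $\mathbf{u}_n^* = (u_n|_{I_1^n},\dots,u_n|_{I_n^n})$ is a root of $G_n(\cdot,\omega_n^*,K)$, completing the reduction to the finite-dimensional Kuramoto model \ref{GFunction} and thus finishing the proof of Theorem~\ref{thm:Bifurcation}.
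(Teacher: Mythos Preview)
Your reduction on each interval $I_j^n$ to the algebraic equation $\omega_j-\omega_n^*+K[A_j\cos(u_n(x))-B_j\sin(u_n(x))]=0$ and the discreteness/connectedness argument is exactly the mechanism the paper uses. The structural content of the two proofs is the same; the difference lies entirely in how the non-degeneracy $(A_j,B_j)\neq(0,0)$ is obtained.

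This is where your proposal has a genuine gap. Your convergence argument for $A_j+\mbi B_j$ does not close: you end by conceding that simultaneous vanishing would be a ``non-generic triple coincidence'' to be excluded by an extra hypothesis or checked case-by-case. No such extra hypothesis is needed. Observe that the multiplication part of the linearization $DF_n(u_n,K)$ at the steady state is
\[
    Q_n(x)=-K\int_0^1 W_n(x,y)\cos(u_n(y)-u_n(x))\,\mathrm{d}y
          =-K\bigl[B_j\cos(u_n(x))+A_j\sin(u_n(x))\bigr],\qquad x\in I_j^n,
\]
and the essential spectrum of $DF_n(u_n,K)$ on $X_n$ is precisely the range of $Q_n$. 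The spectral localization already established for steady states on $\mathcal{M}_n$ (Lemma~\ref{lem:tildelambda} and Proposition~\ref{prop:stepgraphonCM}) confines the essential spectrum to the ball $\{|z+\zeta|<r\}$ in the open left half-plane, so $Q_n(x)\neq 0$ for every $x$. This immediately gives $(A_j,B_j)\neq(0,0)$ for every $j$, and in fact gives more: if $u_n$ were non-constant on some $I_j^n$, then by the intermediate value theorem there would be a point $x^*\in I_j^n$ at which the derivative $-A_j\sin\theta-B_j\cos\theta$ of your level-set function vanishes at $\theta=u_n(x^*)$, i.e.\ $Q_n(x^*)=0$, a contradiction. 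This is the paper's route, and it replaces your proposed extra assumption with information you already have from the center-manifold construction.
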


\begin{proof}
The proof of this result is exactly the same as that of \cite[Lemma~4.18]{bramburger2024persistence}, and so here we only sketch out the details at a high level for the reader. First, the linearization of $F_n$, denoted $DF_n$, about any root $u^*_n \in X_n$ at a fixed value of $K$ is broken up into two pieces: a nonlocal Hilbert--Schmidt integral operator and a multiplication operator. The multiplication operator $v(x) \mapsto Q_n(x)v(x)$ takes the form 
\[
    Q_n(x) = -K\int_0^1 \cos(u_n^*(y) - u_n^*(x))\mathrm{d}y.
\]
Further, the spectrum of $DF_n(u^*_n,K)$ is broken into disjoint sets defined as those $\lambda\in\mathbb{C}$ for which $DF_n(u^*_n,K) - \lambda$ is a non-invertible Fredholm operator (the point spectrum) and when it is not a Fredholm operator (the essential spectrum). The result \cite[Lemma~4.1]{bramburger2024persistence} proves that the essential spectrum is exactly equal to the range of $Q_n$. Moreover, Lemma~\ref{lem:tildelambda} proves that the essential spectrum must be confined to the left half of the complex plane for any steady-state solution $(u^*_n,K)\in\mathcal{M}_n$, giving that $Q_n(x) < 0$ for all $x\in [0,1]$ and $n \geq N$, and in particular $Q_n(x) \neq 0$ everywhere. One then achieves the proof of this result by  assuming that for some fixed $n$ the solution $u_n^*$ is non-constant over one of the subintervals $\{I_j^n\}_{j = 1}^n$. The contradiction is reached by showing that this would imply that $Q_n(x) = 0$ for some $x$, which we have already argued cannot happen. 
\end{proof} % end of proof

With Lemma~\ref{lem:PiecewiseConstant} we have now proven Theorem~\ref{thm:Bifurcation} in its entirety. The most important takeaway from the sketch of the proof above is that it is only the stability of the essential spectrum that is used to show that solutions of $F_n = 0$ are piecewise constant. Thus, our results could be applied more broadly to capture other bifurcations in random Kuramoto networks, as well as prove the existence of higher-dimensional center manifolds using only the nonlocal graphon model \eqref{KuramotoGraphon}.

%%%%%%%%%%%%%%%%%%%%%%%%%%%%%%%%%%%%%%%%%%%%%%%%%%%%%%%%%%%%%%%%%%%%%%%%%%%%%%%%%%%%%%%%%%%%%%%%%%%%
\section{Comments on the Proof of Theorem~\ref{thm:Persistence}}\label{sec:PersistenceProof}

We provide only a brief commentary on the proof of Theorem~\ref{thm:Persistence}. This is because it is can be seen as an application of our previous result \cite[Theorem~3.1]{bramburger2024persistence}. Alternatively, one can arrive at the proof following in a manner similar to that of Theorem~\ref{thm:Bifurcation} in the previous section. Precisely, the proof of Theorem~\ref{thm:Persistence} is similar to that of Lemma~\ref{lem:maptozero} in a simplified setting. This is because according to the assumptions of Theorem~\ref{thm:Persistence} the spectrum $DF(u^*,K)$ is bounded away from the imaginary axis, meaning that there is no need to divide $X_n$ since $X_n^c = \emptyset$ in this case. This means that we need only solve $F_n(u^* + v_n,K) = 0$ for $v_n \in X_n$, while the linearization $DF_n(u^*,K)$ is boundedly invertible on $X_n$. The existence of such a $v_n$ is obtained with a nearly identical application of the contraction mapping theorem, but now keeping in mind that $K$ is fixed, thus simplifying the problem slightly. Finally, an identical result to Lemma~\ref{lem:PiecewiseConstant} will show that the solution $u^* + v_n \in X_n$ is piecewise constant over the intervals $\{I_j^n\}_{j = 1}^n$ since the essential spectrum is again bounded away from the imaginary axis.

%%%%%%%%%%%%%%%%%%%%%%%%%%%%%%%%%%%%%%%%%%%%%%%%%%%%%%%%%%%%%%%%%%%%%%%%%%%%%%%%%%%%%%%%%%%%%%%%%%%%
\section{Discussion}\label{sec:Discussion}

In this paper we have developed a framework for characterizing both the onset and persistence of synchronous solutions to random Kuramoto models through the study of a single master nonlocal equation. The result is an applicable way of studying random networks of coupled oscillators to predict both when synchronous solutions exist and what they look like. A major application of our work herein was to Erd\H{o}s--R\'enyi networks in Section~\ref{sec:ApplicationER}, where we leveraged and extended Ermentrout's pioneering work in \cite{ermentrout1985synchronization}. With this application we also saw that bifurcations to synchrony in the graphon equation do not always come in the form of a standard saddle-node bifurcation, thus rendering our Theorem~\ref{thm:Bifurcation} inapplicable in this scenario. In particular, we were able to prove that in some cases the onset of synchrony comes from bifurcations involving the essential spectrum, a situation that warrants a follow-up investigation. Interestingly, we saw that while bifurcations from the essential spectrum may violate our theoretical assumptions, the results still seem to hold in that both the critical coupling point and the shape of the synchronous solutions are predicted by the master graphon equation. 

While the application in this manuscript was to coupled oscillators, we believe that they are broadly applicable to patterns and oscillations over a variety of randomly networked dynamical systems. That is, Theorem~\ref{thm:Persistence} is mostly a particular instantiation of the previous work \cite{bramburger2024persistence} which used nonlocal graphon models to predict the existence of steady-states to dynamical systems on networks. Thus, it seems reasonable to expect that our bifurcation results from Theorem~\ref{thm:Bifurcation} could similarly be extended to more general networked dynamical systems, as well as other co-dimension one bifurcations. Moreover, extending our center manifold results (see Lemma~\ref{lem:CMgraphon} and Proposition~\ref{prop:stepgraphonCM}) to more general networked systems would provide a method of obtaining invariant manifolds for dynamical systems on random networks.  

\begin{figure} %Figure: Bifurcation curves for small-world network
    \center
    \includegraphics[width = 0.45\textwidth]{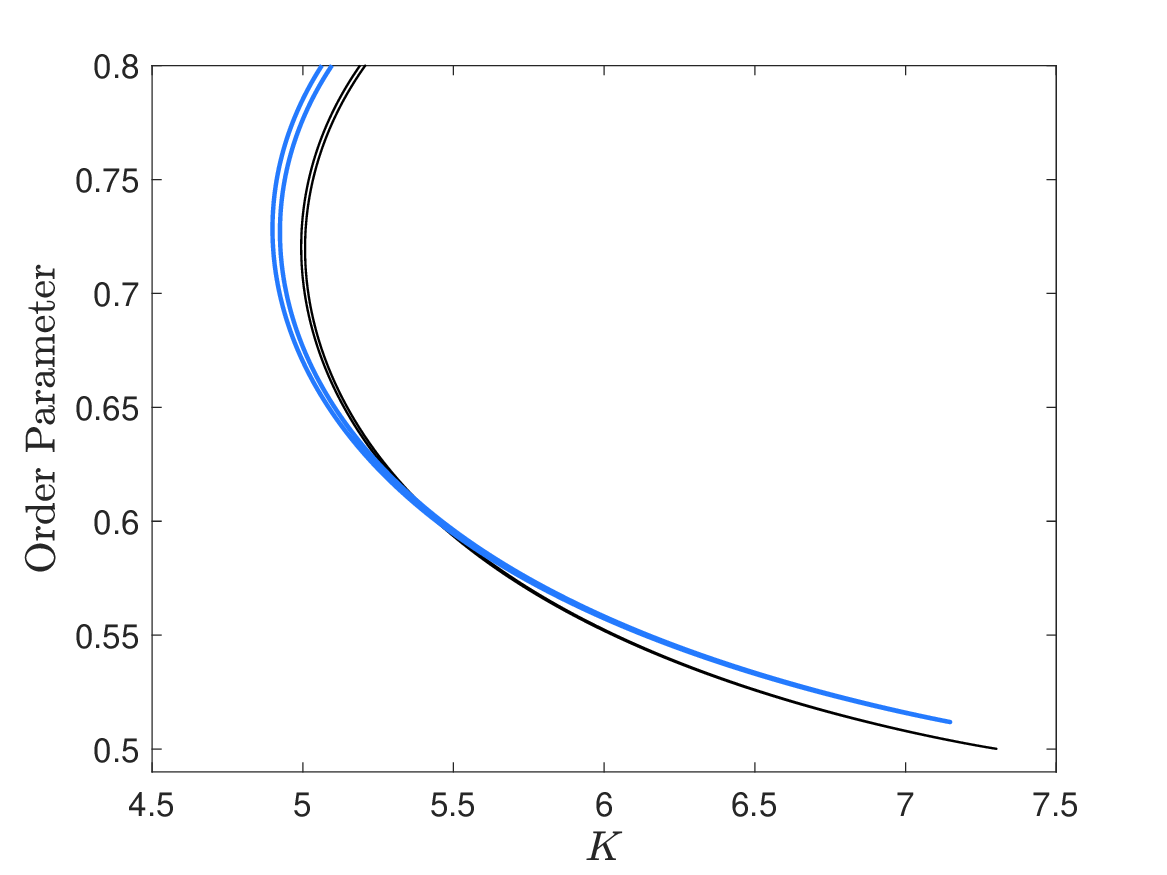} 
    \includegraphics[width = 0.45\textwidth]{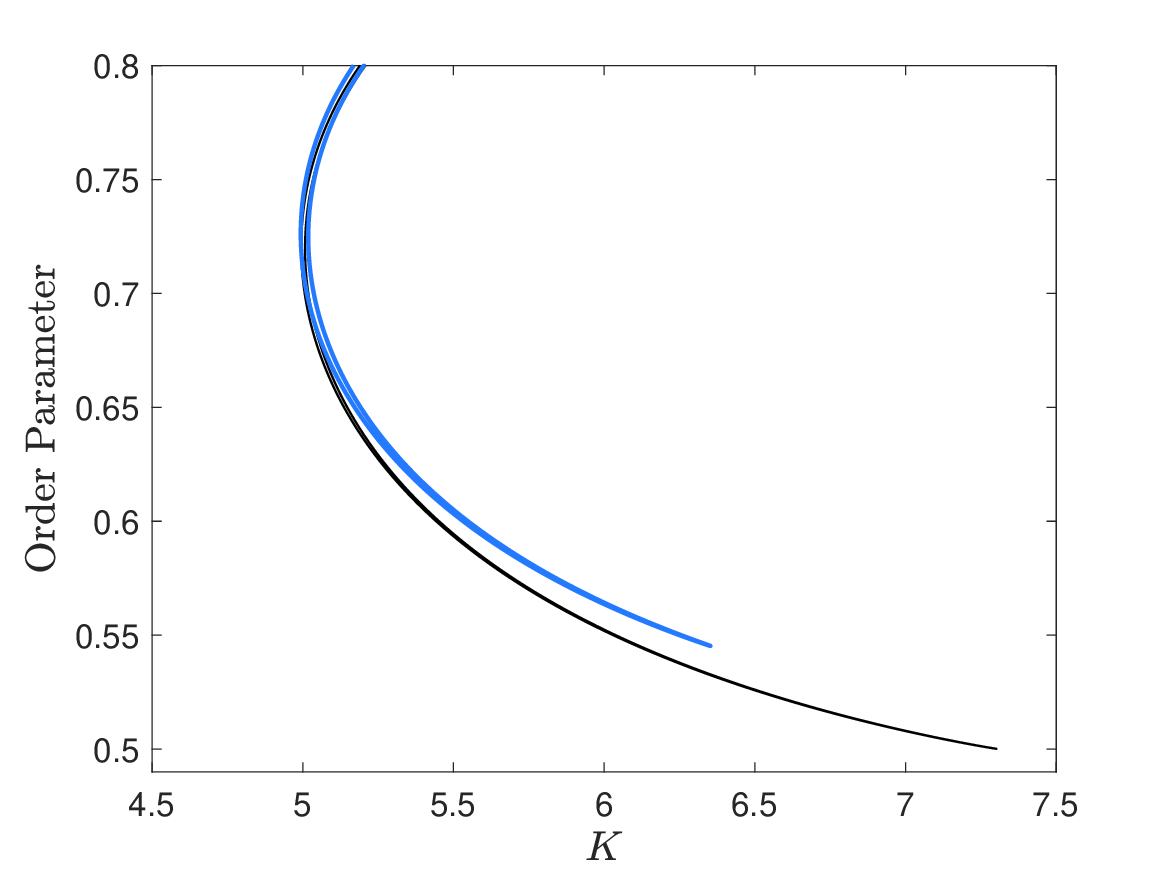}   
    \caption{Bifurcation diagrams comparing random small-world Kuramoto networks (blue) with the graphon model (black) as the order parameter \eqref{OrderParam} versus the coupling coefficient $K$. The Kuramoto networks have $n = 1000$ oscillators arranged over a random weighted graph $\mathbb{H}(1000,W)$ (left) and a random simple graph $\mathbb{G}(1000,W)$ (right).}
    \label{fig:SmallWorldBif}
\end{figure}

While the extension of our results to other dynamical systems is interesting, there still remains much to work on the coupled oscillator models considered herein. That is, our applications exclusively focused on Erd\H{o}s--R\'enyi networks, but our theoretical results can be applied to a much wider range of graphon families. This therefore could open up the study of \eqref{KuramotoGraphon} with different graphons to better understand synchronization in \eqref{Kuramoto} when both the frequencies and the network are random. For example, in Figure~\ref{fig:SmallWorldBif} we provide the bifurcation diagram (in black) for the graphon model \eqref{KuramotoGraphon} with the small-world graphon   
\begin{equation}\label{SmallWorldGraphon}
    W(x,y) = \begin{cases}
        0.9 & \min\{1 - |x-y|,|x - y|\} \leq 0.25 \\
        0.1 & \mathrm{otherwise},
    \end{cases}
\end{equation}
and frequencies drawn from the distribution \eqref{CosineDistribution}. We see what appears to be three distinct saddle-node bifurcations at $K = 4.99, 5.01, 7.30$, all of which could be applicable to our analysis. For comparison, we further provide continuations of synchronous solutions (in blue) for the Kuramoto model \eqref{Kuramoto} on random weighted $\mathbb{H}(1000,W)$ and simple $\mathbb{G}(1000,W)$ graphs (using the notation of Section~\ref{sec:Graphons}) of $n = 1000$ oscillators. While the finite-dimensional saddle-node bifurcations are close to the graphon model prediction, much analysis is required to demonstrate that our theorems apply to the results in the figure. Precisely, (i) are the bifurcations a standard saddle-node or something more complicated from the essential spectrum, and (ii) can we prove that all bifurcating solutions to the graphon model are continuous? Such questions and applications remain for a follow-up investigation.

Finally, we note that we have focused on graphons defined on the canonical space $[0,1]$.  Extensions to other probability spaces is feasible, as laid out in \cite{janson2010graphons}. In particular, we expect that our results can be extended to graphons taking the form $W(\mathbf{x},\mathbf{y})$ where $\mathbf{x}=(x_1,x_2) \in [0,1]\times[0,1]$ with the natural frequency of an oscillator given by $\Omega(x_1)$, while the probability of a connection between an oscillator with latent position $(x_{1,j},x_{2,j})$ and $(x_{1,k},x_{2,k})$ is given by $W(\mathbf{x}_j,\mathbf{x}_k)=W(x_{2,j},x_{2,k})$ thus decorrelating the intrinsic frequencies of each oscillator and their network structure. We expect this to be a straightforward extension of the work herein, but leave the details to a follow-up investigation.

%%%%%%%%%%%%%%%%%%%%%%%%%%%%%%%%%%%%%%%%%%%%%%%%%%%%%%%%%%%%%%%%%%%%%%%%%%%%%%%%%%%%%%%%%%%%%%%%%%%%
\section*{Acknowledgments}

JJB was partially supported by an NSERC Discovery Grant through grant RGPIN-2023-04244 and the Fondes de Recherche du Qu\'ebec – Nature et Technologies (FRQNT) through grant 340894.  MH  was partially supported by the National Science Foundation through DMS-2406623.

%%%%%%%%%%%%%%%%%%%%%%%%%%%%%%%%%%%%%%%%%%%%%%%%%%%%%%%%%%%%%%%%%%%%%%%%%%%%%%%%%%%%%%%%%%%%%%%%%%%%
\begin{appendix}

%%%%%%%%%%%%%%%%%%%%%%%%%%%%%%%%%%%%%%%%%%%%%%%%%%%%%%%%%%%%%%%%%%%%%%%%%%%%%%%%%%%%%%%%%%%%%%%%%%%%
\section{Proof of Lemma~\ref{lem:OmegaConv}}\label{app:OmegaConv}

Begin by assuming that $\Omega:[0,1] \to [-1,1]$ is a continuous function and for each $n \geq 1$ let $ \{x_1,x_2,\dots,x_n\}$ be an ordered $n$-tuple of independent uniform random points drawn from $[0,1]$. Define the empirical distribution function
\[ 
    F_n(x)=\frac{1}{n}\sum_{j=1}^n \chi_{(x_j,\infty)}(x),  
\]
where $\chi_S(x)$ is the indicator function associated to the set $S$. We further consider the generalized inverse of $F_n$, the empirical quantile function, given by
\[ 
    G_n(x)= x_j \ \text{for} \ x\in \left[\frac{j-1}{n},\frac{j}{n}\right). 
\] 
Note that the Glivenko--Cantelli Theorem \cite[Theorem 20.6]{billingsley} implies that 
\[ 
    \|F_n-F\|_\infty=\sup_x \left| F_n(x)-x\right| \to 0 
\]
as $n \to \infty$ almost surely, where $F(x) = x$ is the cumulative distribution function for the uniform distribution on $[0,1]$. We will now show that $\|G_n - G\|_\infty\to 0$ as $n \to \infty$ almost surely as well, where $G(y) = F^{-1}(y) = y$.

Letting $\varepsilon > 0$, there exists $N \geq 1$ so that for all $n \geq N$ we have $\|F_n - F\|_\infty < \varepsilon$ with probability 1. Now, suppose that for some $n \geq N$ there exists a $y_n$ such that $|G_n(y_n) - y_n| > \varepsilon$. We will show that this is a probability 0 event when $n \geq N$, which in turn shows that $\|G_n - G\|_\infty \to 0$ as $n \to \infty$ almost surely. Indeed, there exists a $j \in \{1,\dots,n\}$ so that $y_n\in I_j^n$ and monotonicity of $G(y) = y$ implies that the maximal deviation between $G_n$ and $G$ in $I_j^n$ occurs at an endpoint of this interval. At the left endpoint $\zeta_\ell = (j-1)/n = F(x_j)$, we use the fact that $G_n(\zeta) = x_j$ to get 
\[ 
    |G_n(\zeta)-\zeta|=|x_j-F_n(x_j)|>\varepsilon.
\]
The equality above shows that $|G_n(\zeta)-\zeta| > \varepsilon$ happens with probability 0 for $n \geq N$, showing that $|G_n(y_n) - y_n| \leq |G_n(\zeta)-\zeta| < \varepsilon$ with probability 1. 

The above shows that $\|G_n - G\|_\infty \to 0$ as $n\to\infty$ almost surely. Finally, note that $\Omega_n(x) = \Omega(G_n(x))$ for each $n \geq 1$. Since $\Omega$ is assumed continuous, it follows that $\|\Omega_n - \Omega\|_\infty = \|\Omega\circ G_n - \Omega\circ G\|_\infty \to 0$ as $n\to\infty$ almost surely, completing the proof.

%%%%%%%%%%%%%%%%%%%%%%%%%%%%%%%%%%%%%%%%%%%%%%%%%%%%%%%%%%%%%%%%%%%%%%%%%%%%%%%%%%%%%%%%%%%%%%%%%%%%
\section{Proof of Lemma~\ref{lem:CMgraphon}}\label{sec:graphonproof}

We verify the hypothesis of the parameter-dependent center manifold theorem; see  \cite[Theorem 3.3]{haragus11}. Let $u(t,x)=u^*(x)+v(t,x)$ and $K=\Kcrit+\tilde{K}$ in \eqref{eq:maingraphon}. Then, 
\begin{equation} \label{eq:graphonsetupforCM}
    \frac{dv}{dt}=F[u^*+v,\Kcrit + \tilde{K}].
\end{equation}
For the ease of presentation, let us denote $H(v,\tilde{K})=F[u^*+v,\Kcrit + \tilde{K}]$. Note that $H$ is smooth in its arguments, $H(0,0)=0$, and $D_vH(0,0)$ is the linear operator $L$ described previously. Thus, equation \eqref{eq:graphonsetupforCM} assumes the form required for an application of Theorem 3.3 of \cite{haragus11}. The spectral properties required for an application of this result are spelled out in the following lemma. 

\begin{lem} \label{lem:DFprop}
    The linearization $L:X\to X$ has the following properties: 
    \begin{itemize}
    \item[i)] The spectrum $\sigma(L)$ as an operator on $X$ has the decomposition $\sigma=\sigma_0\cup \sigma_s$ where $\sigma_0=\{0\}$ and there exist an $\alpha>0$ such that 
    \[ \sup_{\lambda\in \sigma_s} \mathrm{Re}(\lambda)<-\alpha. \]
        \item[ii)] Restricted to the class of continuous mean-zero functions $X$, the algebraic multiplicity of zero as an eigenvalue of $L$ is one and there exists a function $v^*\in X$ normalized such that the (central) spectral projection $P^c:X \to X$ has the following representation 
        \[ 
            P^cf = v^*(x)\int_0^1 f(y)v^*(y)\mathrm{d} y. 
        \]
        \item[iii)] There exists a $\xi>0$ and $r<\xi-\alpha$  such that 
        \[ \sigma_s\subset \left\{ z\in \mathbb{C} \ | \ |z+\xi|< r \right\},  \]
       with (stable) spectral projection $P^s=I-P^c$.
       \item[iv)] Let $X^s=\mathrm{Rng} P^s$ and define $L^s=L|_{X^s}$.  Then $L^s$ generates an analytic semigroup on $X^s$, which we denote $e^{L^st}$.  Moreover, the following estimate holds
       \begin{equation} 
        \|e^{L^s t} \|\leq C e^{-\alpha t},
        \label{eq:Lstemp} \end{equation}
        for any $t>0$.
       \item[v)]  Let $f\in C_\eta(\mathbb{R},X^s)$ where $\|f\|_\eta=\sup_{t\in\mathbb{R}}\left(e^{-\eta |t|}\|f(t,\cdot)\|_{\infty}\right) $
       Then 
       \[ 
        \frac{dv_s}{dt}=L^sv_s+f(t), 
       \]
       has a unique solution given by
       \[ 
        v_s(t)=\int_{-\infty}^t e^{(t-\tau)L^s} P^s f(\tau) \mathrm{d} \tau.  
       \]
       Furthermore, there exist a continuous function $\kappa(\eta)$ such that $\|v_s\|_{C_\eta}\leq \kappa(\eta)\|f\|_{C_\eta}$ for $\eta$ sufficiently small.
    \end{itemize}
\end{lem}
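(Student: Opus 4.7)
The plan is to write $L$ as the sum of a multiplication operator and a compact integral operator and then read off each of the five claims from Hypothesis~\ref{hyp:SNgraphon} together with standard spectral theory. Specifically, expanding the difference in the integrand yields
\[
    Lv(x) = -\Kcrit Q(x) v(x) + Tv(x), \qquad Q(x) = \int_0^1 W(x,y)\cos(u^*(y)-u^*(x))\,\mathrm{d}y,
\]
with $T$ the integral operator of kernel $\Kcrit W(x,y)\cos(u^*(y)-u^*(x))$. The compactness property \ref{Wcompactness} of $W$ promotes to $T$ via an Arzel\`a--Ascoli argument (essentially as in \cite[Lemma~4.1]{bramburger2024persistence}), so $T$ is compact on $C([0,1])$ and on $X$. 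Because the kernel $W(x,y)\cos(u^*(y)-u^*(x))$ is symmetric in $(x,y)$, the operator $L$ is self-adjoint on $L^2([0,1])$, a fact that will be invoked to identify eigenfunctions with their duals.

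With this setup, items (i)--(iii) follow almost immediately. Hypothesis~\ref{hyp:SNgraphon}(i) states directly that on $X$ the spectrum of $L$ is stable apart from a simple isolated zero eigenvalue with eigenfunction $v^*$. Setting $\sigma_0=\{0\}$ and $\sigma_s=\sigma(L)\setminus\{0\}$, the existence of $\alpha>0$ follows because $0$ is isolated and $\sigma(L)$ is compact (since $L$ is bounded). The spectrum equivalence between $C([0,1])$ and $L^2([0,1])$ established in \cite[Lemma~5.1]{bramburger2024persistence} together with self-adjointness on $L^2$ yields the projection formula $P^cf=v^*(x)\langle f,v^*\rangle$ of (ii), using the normalization $\|v^*\|_{L^2}=1$ so that the left and right eigenfunctions coincide. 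For (iii), Weyl's theorem applied to $L = -\Kcrit Q + T$ identifies the essential spectrum of $L$ with the range of $-\Kcrit Q(\cdot)$, a compact subinterval of $(-\infty,0)$; any remaining discrete eigenvalues in $\sigma_s$ lie in a bounded subset of the open left half-plane (compactness of $\sigma(L)$ and stability), and together the whole of $\sigma_s$ can be enclosed in a disk $\{z:|z+\xi|<r\}$ with $\xi-r>\alpha$ upon choosing $\xi,r$ appropriately.

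For (iv), boundedness of $L^s$ on $X^s$ means that $e^{tL^s}$ is defined by the convergent operator-valued Taylor series, producing a uniformly continuous (indeed entire analytic in $t$) semigroup. The spectral mapping theorem for uniformly continuous semigroups gives $\sigma(e^{tL^s})=e^{t\sigma(L^s)}$, and since $\mathrm{Re}\,\sigma(L^s)\leq -\alpha$ by (i), the spectral radius satisfies $r(e^{tL^s})\leq e^{-\alpha t}$; the Gelfand formula then delivers the bound \ref{eq:Lstemp}, possibly after shrinking $\alpha$ by an arbitrarily small amount and absorbing the resulting factor into the constant $C$. For (v), the candidate solution $v_s(t)=\int_{-\infty}^t e^{(t-\tau)L^s}f(\tau)\,\mathrm{d}\tau$ converges absolutely for $f\in C_\eta(\mathbb{R},X^s)$ provided $\eta<\alpha$, solves the inhomogeneous equation by direct differentiation, and is the unique such solution in $C_\eta$ since any homogeneous solution $w(t)=e^{tL^s}w(0)$ with controlled growth as $t\to-\infty$ must satisfy $w(0)=0$ because the spectrum of $L^s$ is strictly in $\{\mathrm{Re}\,\lambda<-\alpha\}$. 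The required estimate $\|v_s\|_{C_\eta}\leq \kappa(\eta)\|f\|_{C_\eta}$ comes from substituting \ref{eq:Lstemp} into the integrand, splitting the domain at $\tau=0$ to handle the two branches of $e^{\eta|\tau|}$, and performing elementary exponential integrals, yielding $\kappa(\eta)=\mathcal{O}(1/(\alpha-\eta))$ for $\eta\in[0,\alpha)$, which is continuous near $\eta=0$ as required.

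The main subtlety throughout is the transfer of spectral information from $L^2$, where self-adjointness is immediate and the projection formula is natural, to the Banach space $X\subset C([0,1])$ on which the center manifold theorem is ultimately applied. This transfer is handled cleanly by \cite[Lemma~5.1]{bramburger2024persistence}, which shows that the spectrum of operators of this form does not depend on whether one works in $L^2$ or in $C([0,1])$; restriction to the closed subspace $X$ of mean-zero functions then only removes the additional trivial zero eigenvalue with eigenfunction $\mathbf{1}$ coming from the phase-rotation invariance of the full system.
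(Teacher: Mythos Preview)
Your argument is correct and follows essentially the same route as the paper's proof: items (i)--(iii) are read off from Hypothesis~\ref{hyp:SNgraphon}, self-adjointness on $L^2$, and boundedness of $L$, with the $L^2$--$C([0,1])$ spectral transfer handled by \cite[Lemma~5.1]{bramburger2024persistence}; item (v) is the standard variation-of-constants estimate, and the paper in fact arrives at the explicit constant $\kappa(\eta)=2C\alpha/(\alpha^2-\eta^2)$ that your split-integral computation would produce.

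The only methodological difference is in (iv). The paper constructs the resolvent $(L^s-\lambda)^{-1}$ by a Neumann series centered at $-\xi$, obtains a uniform bound $\|(\lambda+\alpha)(L^s-\lambda)^{-1}\|\leq\Xi$ on a sector $|\arg(\lambda+\alpha)|<\varphi$ with $\varphi\in(\pi/2,\pi)$, and then invokes the standard sectorial-operator machinery to produce the analytic semigroup with decay \ref{eq:Lstemp}. You instead use the Taylor series for $e^{tL^s}$ together with the spectral mapping theorem and the Gelfand formula. Both are valid for bounded $L^s$; your route is arguably the more elementary one here, while the paper's resolvent approach has the advantage that exactly the same Neumann-series construction is reused verbatim in Lemma~\ref{lem:DFnprop} to obtain $n$-uniform resolvent bounds for $\tilde{L}^s_n$, where uniformity of the constant $C$ in $n$ is the crux and a pure Gelfand-formula argument would not obviously deliver it.
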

\begin{proof} 
Conclusion (i) is simply a re-statement of part (i) of Hypothesis~\ref{hyp:SNgraphon}.  Hypothesis~\ref{hyp:SNgraphon} also gives that the zero eigenvalue is simple by assumption. Then (ii) follows from self-adjointness of the operator $L = DF(u^*,\Kcrit)$ which implies that the eigenfunction and adjoint eigenfunction are identical. Since $0\leq W(x,y)\leq 1$, the linear operator $L$ is bounded and therefore the stable spectrum is contained in a ball as stated in (iii).  Recall that the absence of unstable spectrum was also assumed in Hypothesis~\ref{hyp:SNgraphon}.  As a result of (iii), the operator $L^s$ is bounded and its spectrum is separated from the imaginary axis and can be contained in a sector.  The resolvent operator can be constructed by Neumann series:
\[ (L^s-\lambda)^{-1}=\frac{-1}{(\lambda+\xi)}\sum_{k=0}^\infty  \left(\frac{L^s+\xi}{\lambda+\xi}\right)^k. \]
There exists a $\varphi\in \left(\frac{\pi}{2},\pi \right)$ and a constant $\Xi>0$ such that for any $\lambda$ satisfying $|\mathrm{arg}(\lambda+\alpha)|<\varphi$ the following resolvent estimate holds:
\[ \left\|(\lambda+\alpha) \left(L^s-\lambda\right)^{-1}\right\|_{\infty\to\infty}\leq \left\| (\lambda+\alpha)\sum_{k=0}^\infty  \left(\frac{L^s+\xi}{\lambda+\xi}\right)^k \frac{1}{(\lambda+\xi)}\right\|_{\infty\to\infty}\leq \Xi.  \]
 Thus, the linear operator $L^s$ is sectorial and the existence of an analytic semigroup obeying the temporal bound (\ref{eq:Lstemp}) follows from standard arguments.  These estimates can be used to verify v), with $\kappa(\eta)=\frac{2C\alpha}{\alpha^2-\eta^2}$ and $\eta<\alpha$.  We omit the details of this calculation. 
\end{proof}

Lemma~\ref{lem:DFprop} confirms the necessary hypotheses to apply \cite[Theorem~3.3]{haragus11} and therefore provides the existence of a center manifold to our graphon model. This center manifold can be written as the graph
\[ 
    \mathcal{M}=\left\{ w^c v^*+\Psi(w^c,\tilde{K})\  | \  w^c\in \mathbb{R} \right\}, 
\]
where $\Psi:\mathbb{R}\times\mathbb{R}\to X^s$ is $C^k$ in its arguments for any $k>2$. The manifold is invariant and contains all solutions that remain locally bounded for all $t\in\mathbb{R}$. The reduced equation on the center manifold is obtained by
\begin{equation} 
    \frac{dw^c}{dt} = \langle F(u^*+w^cv^*+\Psi(w^c,\tilde{K}),\Kcrit+\tilde{K}),v^*\rangle.  \label{eq:graphonprojectCM} \end{equation}
We next expand 
\begin{equation} \label{eq:expandedrhswithR}
    \begin{split}
F(u^*+w^cv^*+\Psi(w^c,\tilde{K}),\Kcrit+\tilde{K}) &= L \Psi + \frac{\tilde{K}}{\Kcrit} L\Psi +\tilde{K}\int_0^1 W(x,y)\sin(u^*(y) - u^*(x)) \mathrm{d}y \\ &\quad +(\Kcrit+\tilde{K}) R(w^cv^*+\Psi(w^c,\tilde{K})),
    \end{split}
\end{equation}
where we introduce $R(w^cv^*+\Psi(w^c,\tilde{K}))$ as a remainder term to capture all higher-order terms in the expansion. Letting $h_c(w^c,\tilde{K})$ denote the right hand side of \eqref{eq:graphonprojectCM}, we note that $h_c(w^c,\tilde{K})=a\tilde{K}+\mathcal{O}(2)$ where $a=-\frac{1}{\Kcrit}\langle \Omega - \overline\Omega,v^*\rangle=\int_0^1 W(x,y)\sin(u^*(y)-u^*(x))\mathrm{d}y$. Then, the mapping $\Psi$ satisfies the invariance condition 
\begin{equation} \label{eq:graphonreductionequation}
    \left(D_{w^c}\Psi\right) h_c(w^c,\tilde{K})=P^s \left( F(u^*+w^cv^*+\Psi(w^c,\tilde{K}),\Kcrit+\tilde{K})\right).
\end{equation}

To obtain an expansion for $\Psi$ we begin with the linear ansatz:
\[ 
    \Psi(w^c,\tilde{K})=\Psi_{10}w^c+\Psi_{01}\tilde{K}+\mathcal{O}(2),  
\]
where $\Psi_{10}$ and $\Psi_{01}$ are elements of $X^s$. Substituting this first-order expansion into \eqref{eq:graphonreductionequation} and retaining only linear terms, we obtain the solvability condition  
\[ 
    a\Psi_{10}\tilde{K} = w^c L^s \Psi_{10}+ \tilde{K} L^s \Psi_{01} +\tilde{K} P^s\bigg(\int_0^1 W(x,y)\sin(u^*(y) - u^*(x)) \mathrm{d}y\bigg),   
\]
from which we obtain $\Psi_{10}=0$ and
\[
    \Psi_{01}=- (L^s)^{-1}P^s\bigg(\int_0^1 W(x,y)\sin(u^*(y) - u^*(x)) \mathrm{d}x\bigg).  
\]
We now compute higher-order expansions for the reduced equation on the center manifold. With the above determined linear terms in $\Psi$, we now obtain
\begin{equation}
    \begin{split} 
        h_c(w^c,\tilde{K})&=\langle F(u^*+w^cv^*+\Psi(w^c,\tilde{K}),\Kcrit+\tilde{K}),v^*\rangle \\
            &= \langle \tilde{K}\int_0^1 W(x,y)\sin(u^*(y) - u^*(x)) \mathrm{d}y  +(\Kcrit+\tilde{K}) R(w^cv^*+\Psi(w^c,\tilde{K})),v^*\rangle 
    \end{split}
\end{equation}
Since $\Psi$ lacks linear terms in $w^c$ and $R$ is quadratic in its argument we obtain that the reduced equation on the center manifold has the expansion
\[ 
    \frac{dw^c}{dt}=a\tilde{K}+b(w^c)^2 +\mathcal{O}\left(w^c\tilde{K},\tilde{K}^2,|w^c+\tilde{K}|^3 \right),
\]
where $b$ is given in \eqref{eq:aandb}.  This concludes the proof of Lemma~\ref{lem:CMgraphon}.

%%%%%%%%%%%%%%%%%%%%%%%%%%%%%%%%%%%%%%%%%%%%%%%%%%%%%%%%%%%%%%%%%%%%%%%%%%%%%%%%%%%%    
\section{Proof of Proposition~\ref{prop:stepgraphonCM}}\label{sec:stepgraphonproof}

We will apply the center manifold theorem to the system of 
\[ \frac{d\tilde{v}_n}{dt}= H_n(\tilde{v}_n,\tilde{K}), \]
where we recall
\begin{equation} \label{eq:HNexp}
\begin{split}
    H_n(\tilde{v}_n,\tilde{K})&=\Omega_n - \omega_n^* +(\Kcrit +K^*_n+\tilde{K})\mathcal{W}_n\left[u^*+\phi^s_n+\tilde{v}_n \right] \\
    &= \tilde{L}_n\tilde{v}_n+\tilde{K}\mathcal{W}_n[u^*+\phi^s_n] +\tilde{K}D\mathcal{W}_n[u^*+\phi^s_n]\tilde{v}_n+(\Kcrit+K^*_n+\tilde{K})R_n(\tilde{v}_n)
    \end{split}
    \end{equation}
By Lemma~\ref{lem:tildelambda}, the linearization $\tilde{L}_n$ has a simple eigenvalue near the origin (for $n$ sufficiently large) while the remainder of the spectrum is separated from the imaginary axis and contained in a ball lying strictly to the left of the line $\mathrm{Re}(\lambda)=-\alpha$. 

We therefore obtain an analogous result to that of Lemma~\ref{lem:DFprop} which we state now.

\begin{lem} \label{lem:DFnprop}
    There exists a $N\geq 1$ such that for all $n\geq N$ the linearization $\tilde{L}_n:X_n\to X_n$ has the following properties 
    \begin{itemize}
    \item[i)] The spectrum $\sigma(\tilde{L}_n)$ as an operator on $X_n$ has the decomposition $\sigma=\tilde{\sigma}_0\cup \tilde{\sigma_s}$ where $\tilde{\sigma}_0=\{\tilde{\lambda}_n\}$ and for $\alpha>0$ as in Lemma~\ref{lem:DFprop} it holds that 
    \[ 
        \sup_{\lambda\in \tilde{\sigma}_s} \mathrm{Re}(\lambda)<-\alpha. 
        \]
        \item[ii)] Restricted to the space $X_n$ the algebraic multiplicity of $\tilde{\lambda}_n$ as an eigenvalue of  $\tilde{L}_n$ is one and there exists a function $\tilde{v}_n^*(x)\in X_n$ normalized such that the (central) spectral projection has the following representation 
        \[ 
            \tilde{P}_n^cf=\tilde{v}_n^*(x)\int_0^1 f(y)\tilde{v}_n^*(y)\mathrm{d} y. 
        \]
        \item[iii)] For $\xi>0$ and $r<\xi-\alpha$ as in Lemma~\ref{lem:DFprop}, we have that  
        \[ 
            \tilde{\sigma}_s\subset \left\{ z\in \mathbb{C} \ | \ |z+\xi|< r \right\},  
        \]
       with (stable) spectral projection $\tilde{P}^s_n=I-\tilde{P}^c_n$.
       \item[iv)] Let $\tilde{X}^s_n=\mathrm{Rng} \tilde{P}^s$ and define $\tilde{L}^s_n=\tilde{L}_n|_{\tilde{X}^s_n}$.  Then $\tilde{L}^s_n$ generates an analytic semigroup on $\tilde{X}^s_n$ which we denote $e^{\tilde{L}_n^st}$.  Moreover, the following estimate holds
       \[ 
            \|e^{\tilde{L}_n^s t} \|\leq C e^{-\alpha t},
        \]
        for any $t>0$  and a constant $C$ independent of $n$.
       \item[v)]  Let $f\in C_{\tilde{\eta}}(\mathbb{R},\tilde{X}^s_n)$. Then 
       \[ 
        \frac{d\tilde{v}^s_n}{dt}=\tilde{L}_n^s\tilde{v}^s_n+f(t), 
       \]
       has a unique solution given by
       \[ 
        \tilde{v}^s_n(t)=\int_{-\infty}^t e^{(t-\tau)\tilde{L}^s_n} \tilde{P}_n^s f(\tau) \mathrm{d} \tau. 
        \]
       Furthermore, there exist a continuous function $\tilde{\kappa}(\tilde{\eta})$ such that $\|\tilde{v}^s_n\|_{C_{\tilde{\eta}}}\leq \tilde{\kappa}(\tilde{\eta})\|f\|_{C_{\tilde{\eta}}}$ for $\tilde{\eta}$ sufficiently small.
    \end{itemize}
\end{lem}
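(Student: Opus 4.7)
The plan is to transfer the spectral framework of Lemma~\ref{lem:DFprop} from $L$ to $\tilde{L}_n$ for all $n \geq N$, with careful attention to keeping every constant uniform in $n$. The critical ingredient already in hand is the norm convergence $\|\tilde{L}_n - L\|_{\infty \to \infty} \to 0$ established during the proof of Lemma~\ref{lem:tildelambda}, together with the spectral localization provided by that lemma.

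Parts (i)--(iii) are essentially a repackaging of Lemma~\ref{lem:tildelambda}. That result already supplies the simple isolated eigenvalue $\tilde{\lambda}_n$ with eigenfunction $\tilde{v}_n^*$, and places the remainder of the spectrum inside the ball $\{z \in \C : |z+\zeta| < r\}$ with $r < \zeta$. Since $|\tilde{\lambda}_n|$ can be made arbitrarily small by enlarging $N$, we may fix the same $\alpha > 0$ as in Lemma~\ref{lem:DFprop} and still have $\sup_{\lambda \in \tilde{\sigma}_s} \mathrm{Re}(\lambda) < -\alpha$. The rank-one projection formula in (ii) is inherited from the self-adjointness of $\tilde{L}_n$ on $L^2$, which follows from the symmetry of $W_n$ and of the factor $\cos(u^*(y)+\phi_n^s(y)-u^*(x)-\phi_n^s(x))$, combined with the equivalence of point spectra on $L^2$ and on $X_n$ obtained as in \cite[Lemma~5.1]{bramburger2024persistence}.

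For (iv) I will mirror the sectorial-operator argument of Lemma~\ref{lem:DFprop}(iv). Because $\tilde{\sigma}_s \subset \{z : |z+\xi|<r\}$ uniformly in $n$, the resolvent admits the Neumann representation
\[
(\tilde{L}_n^s - \lambda)^{-1} = -\frac{1}{\lambda+\xi} \sum_{k=0}^\infty \left(\frac{\tilde{L}_n^s + \xi}{\lambda+\xi}\right)^k,
\]
valid for $\lambda$ outside that ball. Since $\tilde{L}_n$ is a bounded operator whose operator norm is uniformly controlled in $n$ (using $\tilde{L}_n \to L$ in norm), the series converges with an upper bound independent of $n$ on any sector $|\arg(\lambda+\alpha)|<\varphi$ with $\varphi \in (\pi/2, \pi)$. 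This gives a uniform resolvent estimate $\|(\lambda+\alpha)(\tilde{L}_n^s-\lambda)^{-1}\|_{\infty\to\infty} \leq \Xi$ with $\Xi$ independent of $n$, hence sectoriality of $\tilde{L}_n^s$ with fixed sector data and the desired semigroup bound $\|\me^{\tilde{L}_n^s t}\| \leq C \me^{-\alpha t}$ with $C$ independent of $n$. Part (v) then follows from the standard Duhamel representation and the semigroup bound of (iv), yielding the estimate with $\tilde{\kappa}(\tilde{\eta}) = 2C\alpha/(\alpha^2 - \tilde{\eta}^2)$ as in Lemma~\ref{lem:DFprop}(v).

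The main obstacle is precisely this uniformity of every constant in $n$: the exponent $\alpha$, the sector angle $\varphi$, the sectorial constant $\Xi$, the semigroup prefactor $C$, and the solvability constant $\tilde{\kappa}$. All of these must be chosen once and for all so that the subsequent contraction-mapping and center-manifold arguments in Proposition~\ref{prop:stepgraphonCM} can be carried out uniformly for large $n$. This uniformity is possible only because the spectral containment ball in Lemma~\ref{lem:tildelambda} is independent of $n$ and $\|\tilde{L}_n\|_{X_n \to X_n}$ is uniformly bounded; if either of these failed the Neumann series estimate would degenerate and the whole reduction scheme would collapse. Everything else in the proof is a routine adaptation of the classical sectorial-operator and Duhamel arguments already used for $L$.
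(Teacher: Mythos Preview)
There is a genuine gap in your handling of part (iv), precisely at the point you identify as the main obstacle. You claim that the proof of Lemma~\ref{lem:tildelambda} establishes $\|\tilde{L}_n - L\|_{\infty\to\infty}\to 0$, and you then rely on this to get a uniform-in-$n$ bound on the Neumann series for the resolvent. But Lemma~\ref{lem:tildelambda} only shows $\|\tilde{L}_n - L_n\|_{\infty\to\infty}\to 0$; the remaining step $\|L_n - L\|_{\infty\to\infty}\to 0$ is \emph{false} in general under cut-norm convergence of $W_n\to W$, as the paper itself notes (and as is discussed in \cite{bramburger2024persistence}). What is available is only $\|L_n - L\|_{2\to 2}\to 0$. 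Consequently, your route to a uniform sectorial constant $\Xi$ does not go through: uniform spectral containment plus a uniform bound on $\|\tilde{L}_n\|_{\infty\to\infty}$ does not by itself control $\|(\tilde{L}_n^s+\xi)^k\|_{\infty\to\infty}$ uniformly in $n$, and without $\infty\to\infty$ norm convergence you cannot invoke the second resolvent identity in the sup norm either.

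The paper repairs this with a two-stage argument that you are missing. First, the second resolvent identity together with $\|\tilde{L}_n^s - L^s\|_{2\to 2}\to 0$ gives a uniform $L^2$ resolvent bound $\|(\tilde{L}_n^s-\lambda)^{-1}\|_{2\to 2}\leq \Theta/|\lambda+\alpha|$. Second, this $L^2$ bound is bootstrapped to $L^\infty$ by contradiction: assuming $\Xi_n\to\infty$ produces a sequence $v_n=(\tilde{L}_n^s-\lambda)^{-1}w_n$ with $\|w_n\|_\infty=1$, $\|v_n\|_\infty\to\infty$, but $\|v_n\|_2$ uniformly bounded; one then writes out $(\tilde{L}_n^s-\lambda)v_n=w_n$ explicitly as a multiplication part $\tilde{Q}_n(x)-\lambda$ (uniformly bounded away from zero) plus integral terms, solves for $v_n(x)$, and applies H\"older's inequality to show that uniform $L^2$ boundedness of $v_n$ forces uniform $L^\infty$ boundedness, a contradiction. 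This structural decomposition of $\tilde{L}_n$ into multiplication plus Hilbert--Schmidt pieces is the essential mechanism for the $L^2\to L^\infty$ upgrade, and it is absent from your proposal.
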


\begin{proof}
    Properties (i) through (iii) follow from the spectral results obtained in Lemma~\ref{lem:tildelambda}. The primary challenge is to validate that the constant $C$ in (iv) can be chosen independent of $n$, after which (v) follows from calculations analogous to those in Lemma~\ref{lem:DFprop}. As we did in Lemma~\ref{lem:DFprop}, we obtain a formula for the resolvent operator via Neumann series:
    \[ 
        \left(\tilde{L}^s_n-\lambda\right)^{-1}w=-\sum_{k=0}^\infty  \left(\frac{\tilde{L}^s_n+\xi}{\lambda+\xi}\right)^k \frac{w}{(\lambda+\xi)},
    \]
    where $\xi > 0$ is as given in property (iii). Since the spectrum of the shifted operator $\tilde{L}^s_n+\xi$ is contained strictly inside a ball of radius $r$ we therefore have the resolvent operator is bounded for any $|\lambda+\xi|>r$. Furthermore, there exists a $\varphi\in \left(\frac{\pi}{2},\pi \right)$
    such that for any $\lambda$ satisfying $|\mathrm{arg}(\lambda+\alpha)|<\varphi$ the following resolvent estimate holds:
        \[ 
        \left\|(\lambda+\alpha) \left(\tilde{L}^s_n-\lambda\right)^{-1}\right\|_{\infty \to \infty}\leq \left\| (\lambda+\alpha)\sum_{k=0}^\infty  \left(\frac{\tilde{L}^s_n+\xi}{\lambda+\xi}\right)^k \frac{1}{(\lambda+\xi)}\right\|_{\infty \to \infty}\leq \Xi_n.  
    \]
    We now verify that $\Xi_n$ can be taken independently of $n$.  Our strategy is as follows.  We show that the resolvent operator is uniformly bounded on $L^2$ and then use this to derive uniformity with respect to the norm on $X_n$.   
    
    The second resolvent identity; \[
    \left(\tilde{L}^s_n-\lambda\right)^{-1}-\left(L^s-\lambda\right)^{-1}=\left(\tilde{L}^s_n-\lambda\right)^{-1}\left(L^s-\tilde{L}^s_n\right)\left(L^s-\lambda\right)^{-1},\] implies that 
    \[ 
        \left(\tilde{L}^s_n-\lambda\right)^{-1} = \left[ I - (L^s-\tilde{L}^s_n)\left(L^s-\lambda\right)^{-1}\right]^{-1} \left(L^s-\lambda\right)^{-1}.
    \]
    Note that the inverse of the terms in the square brackets may be obtained from a Neumann series expansion provided that $\|\tilde{L}^s_n-L^s\|_{2\to 2}$ is sufficiently small. Therefore, when considered as an operator on $L^2([0,1])$, the operator norm convergence of $\tilde{L}^s_n\to L^s$ implies the following resolvent bound
    \begin{equation} \label{eq:L2resolvent} 
        \left\|\left(\tilde{L}^s_n-\lambda\right)^{-1}\right\|_{2\to 2} \leq \frac{\Theta}{|\lambda+\alpha|}, 
    \end{equation}
    for some constant $\Theta>0$ and independent of $n$. 
    
    In the case of $\tilde{X}_n^s$ equipped with the supremum norm we no longer have operator norm convergence in general; see \cite{bramburger2024persistence}. So, suppose for the sake of contradiction that 
    \[ 
        \left\|\left(\tilde{L}^s_n-\lambda\right)^{-1}\right\|_{\infty \to \infty} \leq \frac{\Xi_n}{|\lambda+\alpha|}, 
    \]
    but with $\Xi_n\to\infty$.  This would imply that there exists a sequence $w_n\in \tilde{X}^s_n$ with $\|w_n\|_\infty=1$ but for which $v_n=\left(\tilde{L}^s_n-\lambda\right)^{-1}w_n$ satisfies $\|v_n\|_\infty\to \infty$ as $n\to\infty$.   Since $\|w_n\|_2\leq \|w_n\|_\infty$ the resolvent estimate (\ref{eq:L2resolvent}) implies that $\|v_n\|_2$ is uniformly bounded.  We then argue as in the proof of Lemma~\ref{lem:specDFn}.  Recall that $\tilde{L}_n=(\Kcrit+K^*_n)D\mathcal{W}_n[u^*+\phi^s_n]$.  Let
    \[ \tilde{Q}_n(x)=-(\Kcrit+K^*_n)\int_0^1 W_n(x,y)\cos\left(u^*(y)+\phi^s_n(y)-u^*(x)-\phi^s_n(x) \right)\mathrm{d}y,\]
    be the multiplication part of the operator $\tilde{L}_n$.  Recall the definition of $Q_n(x)$ in (\ref{eq:LandLndecomp}).  We note that Lemma~\ref{lem:tildelambda} implies that $\|Q_n(x)-\tilde{Q}_n(x)\|_\infty \to 0$ as $n\to\infty$.  Therefore $\tilde{Q}_n(x)-\lambda\neq 0$ for $|\lambda+\xi|>r$ and $n$ sufficiently large.  
Recalling  that $\tilde{L}^s_n=\tilde{P}^s_n\tilde{L}_n$, we express
    \begin{equation} 
        \begin{split} 
            w_n&= (\tilde{Q}_n(x)-\lambda) v_n+(\Kcrit+K^*_n)\int_0^1 W_n(x,y)\cos(u^*(y)+\phi^s_n-u^*(x)-\phi^s_n(x))v_n(y)\mathrm{d}y \\
            &- \tilde{v}^*_n \left[ \int_0^1 \tilde{v}^*_n(y)\tilde{Q}_n(y)v_n(y)\mathrm{d}y  \right. \\ 
            &+ \left. (\Kcrit+K^*_n)\int_0^1 \int_0^1 \tilde{v}^*_n(x) W_n(x,y)\cos(u^*(y)+\phi^s_n-u^*(x)-\phi^s_n(x))v_n(y)\mathrm{d}y\mathrm{d}x \right].
        \end{split}
    \end{equation}
    This can be re-arranged as 
    \begin{equation} 
        \begin{split} 
            v_n(x) &= \frac{w_n(x)}{\tilde{Q}_n(x)-\lambda} -\frac{(\Kcrit+K^*_n)}{\tilde{Q}_n(x)-\lambda}\int_0^1 W_n(x,y)\cos(u^*(y)+\phi^s_n(y)-u^*(x)-u^*(y))v_n(y)\mathrm{d}y \\ 
            &+ \frac{ \tilde{v}^*_n}{\tilde{Q}_n(x)-\lambda} \left[ \int_0^1 \tilde{v}^*_n(y)\tilde{Q}_n(y)v_n(y)\mathrm{d}y  \right. \\
            &+ \left.(\Kcrit+K^*_n)\int_0^1 \int_0^1 \tilde{v}_n^*(x) W_n(x,y)\cos(u^*(y)+\phi^s_n(y)-u^*(x)-\phi^s_n(x))v_n(y)\mathrm{d}y\mathrm{d}x \right].
        \end{split}
    \end{equation}
    H\"older's inequality then implies that uniform boundedness of $v_n$ in $L^2$ translates to uniform boundedness of $v_n$ in $\tilde{X}^s_n$ equipped with the $L^\infty$ norm. The stated temporal bound in (iv) can then be obtained by standard estimates. Property (v) follows as in the proof of Lemma~\ref{lem:DFprop} and we omit the details.  This completes the proof. 
\end{proof}

In contrast to the construction of the center manifold in the graphon case, in this situation we no longer have a zero eigenvalue, but rather an isolated eigenvalue close to the origin.  To account for this, following \cite{haragus11}, we instead study the equation $\frac{d\tilde{v}_n}{dt}=J_n(\tilde{v}_n,\tilde{K},\nu)$ where 
\[ 
    J_n(\tilde{v}_n,\tilde{K},\nu)=M_n\tilde{v}_n+\tilde{K}\mathcal{W}_n[u^*+\phi^s_n]  +\nu \tilde{P}^c_n \tilde{v}_n+\tilde{K}D\mathcal{W}_n(u^*+\phi^s_n)\tilde{v}_n+(\Kcrit+K^*_n+\tilde{K})R_n(\tilde{v}_n),
\]
see \eqref{eq:HNexp} for reference but with the linear operator $\tilde{L}_n$ replaced with
\[ 
    M_n=\tilde{L}_n-\tilde{\lambda}_n\tilde{P}^c_n.
\]
The spectrum of $M_n$ on the space $X_n$ consists of an algebraically simple isolated eigenvalue at the origin with the rest of the spectrum being contained in the set $\mathrm{Re}(\lambda)\leq -\alpha$ for $n$ sufficiently large.  Therefore the equation $\frac{d\tilde{v}_n}{dt}=J_n(\tilde{v}_n,\tilde{K},\nu)$ satisfies the hypothesis of the center manifold theorem given in  \cite[Theorem 3.3]{haragus11}.  The manifold can be expressed as a graph
\[ 
    \mathcal{M}_{n,\nu}=\left\{ w^c_n \tilde{v}_n^*+\Psi_n\left(w^c_n,\tilde{K},\nu\right)\  | \  w^c_n\in \mathbb{R} \right\}, 
\]
where the function $\Psi_n:\mathbb{R}^3\to \tilde{X}^s_n$ is $C^k$ for any $k>2$. 

The reduced equation on the center manifold is obtained as
\begin{equation} \frac{dw^c_n}{dt} = \left\langle 
    (\nu-\tilde{\lambda}_n)\tilde{P}^c_n(w^c_n \tilde{v}_n^*)+\Omega_n+(\Kcrit +K^*_n+\tilde{K})\mathcal{W}_n\left[u^*+\phi^s_n+w^c_n \tilde{v}_n^*+\Psi_n\left(w^c_n,\tilde{K},\nu\right)\right],\tilde{v}^*_n\right\rangle.  \label{eq:stepgraphonprojectCM} 
\end{equation}
Let $h_{c,n}(w^c_n,\tilde{K},\nu)$ denote the right hand side of (\ref{eq:stepgraphonprojectCM}).  We note that $h_{c,n}(w^c_n,\tilde{K},\nu)=a_n\tilde{K}+\mathcal{O}(2)$ where $a_n=\langle \mathcal{W}_n[u^*+\phi^s_n],\tilde{v}^*_n\rangle$ and $\mathcal{O}(2)$ denotes terms that are at least quadratic in the variables $(w^c_n,\tilde{K},\nu)$. Then, the mapping $\Psi_n$ satisfies
\begin{equation} \label{eq:stepgraphonreductionequation}
    \left(D_{w^c_n}\Psi_n\right) h_{c,n}(w^c,\tilde{K},\nu)=\tilde{P}^s_n \left( \Omega_n(x)+(\Kcrit+K^*_n+\tilde{K})\mathcal{W}_n[u^*+\phi^s_n+w^c_n\tilde{v}^*_n+\Psi_n(w^c_n,\tilde{K},\nu)]\right).
\end{equation}
To obtain an expansion for $\Psi_n$ we begin with the linear ansatz:
\[ \Psi_n(w^c_n,\tilde{K},\nu)=\Psi_{n,100}w^c_n+\Psi_{n,010}\tilde{K}+\Psi_{n,001}\nu+\mathcal{O}(2),  \]
where $\Psi_{n,\cdot\cdot\cdot}$ are elements of $\tilde{X}^s_n$. Substituting into \eqref{eq:stepgraphonreductionequation} and retaining only linear terms we obtain the solvability condition  
\[ 
    a_n\Psi_{n,100}\tilde{K} = w^c_n \tilde{L}^s_n \Psi_{n,100}+\tilde{K} \tilde{L}^s_n \Psi_{n,010}+\nu \tilde{L^s_n}\Psi_{n,001} +\tilde{K} \tilde{P}^s_n \mathcal{W}_n[u^*+\phi^s_n],   
\]
from which we obtain $\Psi_{n,100}=\Psi_{n,001}=0$ while
\[
    \Psi_{n,010}=-(\tilde{L}^s_n)^{-1}\tilde{P}^s_n\mathcal{W}_n[u^*+\phi^s_n].  
\]
Since $\Psi_n$ lacks linear terms in $w^c_n$ and $R_n$ is quadratic in its argument we obtain that the reduced equation on the center manifold has the expansion
\[ \frac{dw^c_n}{dt}=\nu w^c_n+a_n\tilde{K}+b_n(w^c_n)^2 +\mathcal{O}\left(w^c_n\tilde{K},\tilde{K}^2,|w^c+\tilde{K}+\nu|^3 \right),\]
where $a_n$ and $b_n$ are given in (\ref{eq:anbns}).

The proof of the center manifold theorem in \cite{haragus11} requires the use of a cut-off function that then describes the size of the neighborhood on which the center manifold reduction is valid. In what follows we establish uniformity in large $n$ of the size of this neighborhood of validity. 

Following \cite{haragus11} let $\mathcal{V}_n=\left( \tilde{v}_n , \tilde{K} , \nu\right)^T$ and 
\[ 
    \mathcal{L}_n=\left(\begin{array}{ccc} M_n & \mathcal{W}_n[u^*] & \langle \cdot, \tilde{v}^*_n \rangle \\ 0 & 0 & 0 \\ 0 & 0 & 0 \end{array}\right), \ \mathcal{N}_n(\mathcal{V}_n)=\left(\begin{array}{c} \tilde{K}D\mathcal{W}_n[u^*+\phi^s_n]\tilde{v}_n+(\Kcrit+K^*_n+\tilde{K})R_n(\tilde{v}_n) \\ 0 \\ 0 \end{array}\right),
\]
so that the system is recast as 
\[ 
    \partial_t\mathcal{V}_n=\mathcal{L}_n\mathcal{V}_n+\mathcal{N}_n(\mathcal{V}_n). 
\]
The spectrum of $\mathcal{L}_n$ is unchanged while the algebraic multiplicity of zero is now three.  There exist center and stable projections associated to these spectral sets which we denote $\mathcal{P}_n^c$ and $\mathcal{P}_n^s$. Note that these projections have the following structure
\[ \mathcal{P}_n^c=\left(\begin{array}{ccc} \tilde{P}^c_n & * & 0 \\ 0 & 1& 0 \\ 0 & 0 & 1 \end{array}\right), \quad \mathcal{P}_n^s=\left(\begin{array}{ccc} \tilde{P}^s_n & * & 0 \\ 0 & 0& 0 \\ 0 & 0 & 0 \end{array}\right),\]
where the $*$ denote  non-zero terms which will not be relevant to the remaining analysis.  
A smooth cut-off function is selected to modify the nonlinearity.  The following modified nonlinearity is considered 
\[ 
    \mathcal{N}_n^\varepsilon(\tilde{v}_n,\tilde{K})=\chi\left( \frac{\|(w^c_n,\tilde{K},\nu)\|_\infty}{\varepsilon}\right) \mathcal{N}_n(\tilde{v}_n,\tilde{K}),
\]
where $\chi:\mathbb{R}\to\mathbb{R}$ is a smooth cutoff function satisfying $\chi(x)=0$ if $|x|\leq 1$ and $\chi(x)=0$ if $|x|\geq 2$. This gives that the system is unchanged when $|w^c|\leq \varepsilon$, $|\tilde{K}|\leq \varepsilon$, and $|\nu|\leq \varepsilon$, i.e. $\mathcal{N}_n^\varepsilon(\tilde{v}_n,\tilde{K})=\mathcal{N}(\tilde{v}_n,\tilde{K})$ whenever $|\langle \tilde{v}_n,\tilde{v}_n^*\rangle|\leq \varepsilon$ and $|\tilde{K}|<\varepsilon$. Then, the contraction mapping employed in the proof of the center manifold requires control of three terms: the function $\kappa_n(\eta)$ appearing in Lemma~\ref{lem:DFnprop} and the quantities 
\begin{equation}
    \begin{split}
        \delta_{0,n}(\varepsilon) &= \sup_{\tilde{K}\in \mathbb{R},\nu\in\mathbb{R},v\in \tilde{X}_n^c\times B_\varepsilon(0)\subset \tilde{X}_n^s} \left\{ \| \mathcal{P}_n^c \mathcal{N}_n^\varepsilon(v,\tilde{K}) \|_{\infty},   \| \mathcal{P}_n^s \mathcal{N}_n^\varepsilon(v,\tilde{K}) \|_{\infty} \right\} \\
         \delta_{1,n}(\varepsilon) &= \sup_{\tilde{K}\in \mathbb{R},\nu\in\mathbb{R},v\in \tilde{X}_n^c\times B_\varepsilon(0)\subset \tilde{X}_n^s}  \left\{ \| D_{\mathcal{V}}\mathcal{P}_n^c \mathcal{N}_n^\varepsilon(v,\tilde{K}) \|_{\infty\to\infty},   \| D_{\mathcal{V}} \mathcal{P}_n^s \mathcal{N}_n^\varepsilon(v,\tilde{K}) \|_{\infty\to \infty} \right\}.
    \end{split}
\end{equation}
We now provide the following estimates.

\begin{lem}
    Under the assumptions of Lemma~\ref{lem:DFnprop}, there exist positive constants $C_0$ and $C_1$, independent of $n$, such that 
    \begin{equation} 
        \delta_{0,n}(\varepsilon)\leq C_0 \varepsilon^2, \quad \delta_{1,n}(\varepsilon)\leq C_1 \varepsilon,\label{eq:deltasestimate} 
    \end{equation}
    for any $\varepsilon>0$
\end{lem}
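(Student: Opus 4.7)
The plan is to exploit the bilinear and quadratic structure of $\mathcal{N}_n$ together with the localization provided by the cut-off $\chi$ to extract the stated powers of $\varepsilon$, while collecting the uniform-in-$n$ bounds on the various building blocks from the preceding lemmas. Specifically, the first coordinate of $\mathcal{N}_n$ decomposes as $\tilde{K}D\mathcal{W}_n[u^*+\phi^s_n]\tilde{v}_n+(\Kcrit+K^*_n+\tilde{K})R_n(\tilde{v}_n)$, so every term either carries an explicit factor of $\tilde{K}$ or is quadratic in $\tilde{v}_n$ through the Taylor remainder
\[
    R_n(\tilde{v}_n)=\mathcal{W}_n[u^*+\phi^s_n+\tilde{v}_n]-\mathcal{W}_n[u^*+\phi^s_n]-D\mathcal{W}_n[u^*+\phi^s_n]\tilde{v}_n.
\]

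First I would collect the uniform estimates that are now available. Since $0\le W_n\le 1$, the operator $D\mathcal{W}_n[u^*+\phi^s_n]:L^\infty\to L^\infty$ has operator norm at most $2$, and a second-order Taylor expansion of sine yields $\|R_n(v)\|_\infty\le C\|v\|_\infty^2$ and $\|DR_n(v)\|_{\infty\to\infty}\le C\|v\|_\infty$ with $C$ independent of $n$. By the proof of Lemma~\ref{lem:specDFn} and the estimate \ref{eq:vntildevnstarestimate}, $\|\tilde{v}^*_n\|_\infty\le M$ uniformly in $n$ for some $M>0$, while $|K^*_n|$ is uniformly bounded by Lemma~\ref{lem:maptozero}. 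The projections $\mathcal{P}_n^c$ and $\mathcal{P}_n^s$ possess the explicit block structure displayed just before the lemma, so their operator norms on $L^\infty\times\mathbb{R}\times\mathbb{R}$ are likewise bounded uniformly in $n$.

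Next I would turn to $\delta_{0,n}(\varepsilon)$. On the support of the cut-off, $|\tilde{K}|,|\nu|,|w^c_n|\le 2\varepsilon$, and since any $\tilde{v}_n\in\tilde{X}_n^c\times B_\varepsilon(0)$ decomposes as $w^c_n\tilde{v}^*_n+\tilde{v}^s_n$ with $\|\tilde{v}^s_n\|_\infty\le\varepsilon$, this gives $\|\tilde{v}_n\|_\infty\le(2M+1)\varepsilon$. Inserting these bounds into the first coordinate of $\mathcal{N}_n$ and using the uniform estimates above yields
\[
    \bigl\|\tilde{K}D\mathcal{W}_n[u^*+\phi^s_n]\tilde{v}_n+(\Kcrit+K^*_n+\tilde{K})R_n(\tilde{v}_n)\bigr\|_\infty\le C'\varepsilon^2,
\]
with $C'$ independent of $n$, and applying the uniformly bounded $\mathcal{P}_n^{c,s}$ gives the first bound in \ref{eq:deltasestimate}.

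For $\delta_{1,n}(\varepsilon)$ I would split $D_{\mathcal{V}}\mathcal{N}_n^\varepsilon$ into the contribution $\chi(\cdot/\varepsilon)D_{\mathcal{V}}\mathcal{N}_n$ and the contribution arising from differentiating the cut-off. Each partial derivative of $\mathcal{N}_n$ retains a factor of $\tilde{K}$ or $\tilde{v}_n$: explicitly, $\partial_{\tilde{v}_n}\mathcal{N}_n=\tilde{K}D\mathcal{W}_n[u^*+\phi^s_n]+(\Kcrit+K^*_n+\tilde{K})DR_n(\tilde{v}_n)$ and $\partial_{\tilde{K}}\mathcal{N}_n=D\mathcal{W}_n[u^*+\phi^s_n]\tilde{v}_n+R_n(\tilde{v}_n)$, and $\partial_\nu\mathcal{N}_n\equiv 0$, so the first contribution is $O(\varepsilon)$ in operator norm. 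The derivative of $\chi$ brings in an $O(1/\varepsilon)$ factor but multiplies $\mathcal{N}_n$ itself, which is $O(\varepsilon^2)$ by the step just completed, hence the second contribution is also $O(\varepsilon)$. Adding these and projecting yields $\delta_{1,n}(\varepsilon)\le C_1\varepsilon$. The only conceptual point is the uniformity of constants in $n$, and this is secured entirely by Lemmas~\ref{lem:maptozero}, \ref{lem:tildelambda}, and \ref{lem:DFnprop}; no additional spectral input is required.
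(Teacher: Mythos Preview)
Your proposal is correct and follows essentially the same approach as the paper: both exploit the quadratic structure of $\mathcal{N}_n$ via a Taylor bound on $R_n$, the uniform bound $\|D\mathcal{W}_n[u^*+\phi^s_n]\|_{\infty\to\infty}\le 2$ coming from $0\le W_n\le 1$, and the uniform-in-$n$ control of $K^*_n$ and $\tilde{v}^*_n$ from the earlier lemmas, together with the localization from the cut-off. Your treatment is in fact more complete than the paper's, which omits the details for $\delta_{1,n}$ entirely; your product-rule split into $\chi D_{\mathcal{V}}\mathcal{N}_n$ and $(D\chi)\mathcal{N}_n$ is the right way to handle it, and your observation that only the first component of $\mathcal{N}_n$ is nonzero (so the $*$ entries in the block structure of $\mathcal{P}_n^{c,s}$ are irrelevant) matches the paper's use of that structure.
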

\begin{proof} The $\varepsilon$-scaling in both estimates stems from the quadratic nature of the nonlinearity $\mathcal{N}_n(v,\tilde{K})$ and smoothness of the cut-off function, so the main item to prove is that the scaling constants $C_0$ and $C_1$ may be chosen independent of $n$.

To condense notation, let
\[ 
    Q(x,y)=u^*(y)+\phi^s_n(y)-u^*(x)-\phi^s_n(x), \quad \Delta_v(x,y)=\tilde{v}_n(y)-\tilde{v}_n(x).
\]
Then
\begin{equation}
    R_n(\tilde{v}_n)=\int_0^1 W_n(x,y)\left[ \sin\left(Q(x,y)+\Delta_v(x,y)\right)-\cos(Q(x,y))\Delta_v(x,y)-\sin(Q(x,y))\right] \mathrm{d}y.
\end{equation}
Combining the fact that $0\leq W_n(x,y)\leq 1$ with Taylor's Theorem gives
$\|\tilde{v}_n\|^2_\infty$ only 
\[ 
    \|R_n(\tilde{v}_n)\|_\infty \leq \frac{1}{2}\sup_{(x,y)\in [0,1]^2}\left|\Delta_v(x,y)^2\right|\leq  \|\tilde{v}_n\|^2_\infty.
\] 
We therefore obtain
\[ 
    \|\mathcal{N}^{(1)}_n(\tilde{v}_n,\tilde{K})\|_\infty \leq \|D\mathcal{W}_n[u^*+\phi^s_n]\|_{\infty\to\infty} |\tilde{K}|\|\tilde{v}_n\|_\infty+ |\Kcrit+K^*_n+\tilde{K}|\|\tilde{v}_n\|_\infty^2,  
\]
where $\mathcal{N}^{(1)}_n$ denotes the first, and only non-zero, entry of $\mathcal{N}_n$.
We have a coarse bound for the operator norm $\|D\mathcal{W}_n[u^*+\phi^s_n]\|_{\infty\to\infty}\leq 2 $ after recalling that
\[ 
    D\mathcal{W}_n[u^*+\phi^s_n]w=\int_0^1 W_n(x,y)\cos\left( u^*(y)+\phi^s_n(y)-u^*(x)-\phi^s_n(x)\right)(w(y)-w(x))\mathrm{d}y 
\]
and provided $|\tilde{K}|\leq \Kcrit/2$ we also obtain $|\Kcrit+K^*_n+\tilde{K}|\leq 2\Kcrit$ for $n$ sufficiently large since we have shown in Lemma~\ref{lem:maptozero} that $K^*_n\to 0$.  This provides an $n$-independent bound 
\[ 
    \|\mathcal{N}^{(1)}_n(\tilde{v}_n,\tilde{K})\|_\infty \leq 2 |\tilde{K}|\|\tilde{v}_n\|_\infty+ 2\Kcrit\|\tilde{v}_n\|_\infty^2. 
\]
Then owing to the structure of the center projection $\mathcal{P}^c_n$ we have that 
\[ 
    \mathcal{P}^c_n\mathcal{N}_n(\tilde{v}_n,\tilde{K})= \left(\begin{array}{c}\tilde{P}^c_n \mathcal{N}^{(1)}_n \\ 0 \\ 0\end{array}\right), 
\]
and therefore we obtain
\[ 
    \|\mathcal{P}^c_n\mathcal{N}^\varepsilon_n(\tilde{v}_n,\tilde{K})\|_\infty \leq 2(1+\Kcrit)\|\tilde{v}^*_n\|^2_\infty \varepsilon^2, 
\]
where the additional $\|\tilde{v}^*_n\|^2_\infty$ comes from the application of $\tilde{P}^c_n$ to $\mathcal{N}^{(1)}_n$ through the results of Lemma~\ref{lem:DFnprop}. Similarly, the stable projection yields the stated estimate for $\delta_{0,n}(\varepsilon)$ in \eqref{eq:deltasestimate}.

The verification that $C_1$ may be chosen independently of $n$ follows from a similar line of analysis so we omit the details.  

%Following similarly to what was done above, we compute
%\[ 
%    D_{\tilde{v}_n} R_n(\tilde{v}_n)w =\int_0^1 W_n(x,y)\left[ \cos\left(Q(x,y)+\Delta_v(x,y)\right)-\cos(Q(x,y))\right] (w(y)-w(x)) \mathrm{d}y. 
%\]
%Applying Taylor's Theorem again we find
%\[ 
%    \|D_{\tilde{v}_n} R_n(\tilde{v}_n)\|_{\infty\to\infty}\leq 2\|\tilde{v}_n(y)-\tilde{v}_n(x)\|_\infty 
%\]

\end{proof}

We now have the existence of a locally invariant center manifold depending on the artificial parameter $\nu$.  Taking $\nu=\tilde{\lambda}_n$ for $n$ sufficiently large so that $|\tilde{\lambda}_n|<\varepsilon$, we then recover the result stated in Proposition~\ref{prop:stepgraphonCM}.

\end{appendix}

\bibliographystyle{abbrv}
\bibliography{references.bib}

\end{document}